 \newtheorem{thm}{THEOREM}[section]
       \newtheorem{cor}[thm]{COROLLARY}
\newtheorem{propo}[thm]{PROPOSITION}
      \newtheorem{rmq}{Remark}[section]
      \newtheorem{lem}[thm]{LEMMA}
\newcommand{\mysection}[1]{\section{#1} \setcounter{equation}{0}}
\font\TenEns=msbm10
\font\SevenEns=msbm7
\font\FiveEns=msbm5
\def\rn{{\mathbb{R}^n}}
\newcommand{\re}[1]{(\ref{#1})}
\def\eps{\varepsilon}
\title{Proportionality of components, Liouville theorems and a priori estimates for noncooperative elliptic systems}
\author{\normalsize \bf Alexandre MONTARU$^{(1)}$, Boyan SIRAKOV$^{(2)}$, Philippe SOUPLET$^{(1)}$
\\
\small $^{(1)}$Universit\'e Paris 13, Sorbonne Paris Cit\'e,
\small LAGA, CNRS, UMR 7539,\\
\small F-93430, Villetaneuse, FRANCE. \\
\small $^{(2)}$PUC-Rio, Departamento de Matematica,
\small Rua Marquês de São Vicente 225, \\
\small Gávea, Rio de Janeiro - CEP 22451-900, BRASIL \\
\small \textit{montaru@math.univ-paris13.fr,  bsirakov@mat.puc-rio.br, souplet@math.univ-paris13.fr} }
\date{}
\begin{document}
\maketitle
\begin{abstract}
{
We study qualitative properties of positive solutions of
noncooperative, possibly nonvariational, elliptic systems.
We obtain new classification and Liouville type theorems in the whole Euclidean space, as well as in half-spaces,
 and deduce a priori estimates and existence of positive solutions for related Dirichlet problems.
We significantly improve the known results for a large class of systems involving a balance between repulsive and attractive terms. This class contains systems arising in biological models of Lotka-Volterra type, in physical models of Bose-Einstein condensates and in models of chemical reactions.}

\end{abstract}

\mysection{Introduction}
\label{intro}

This paper is concerned with existence, non-existence and qualitative properties of
classical solutions of nonlinear elliptic systems in the form
\begin{equation}
\label{mainsyst}
\left\{\ {\alignedat2
 -\Delta u&=f(x,u,v), \\
  -\Delta v&=g(x,u,v).
 \endalignedat}\right.
 \end{equation}
 In a nutshell, we will consider noncooperative, possibly nonvariational, systems with nonlinearities  which
have power growth in $u,v$, and in which the reaction terms dominate the absorption
ones. We will be interested in nonexistence or more general classification results
in unbounded domains such as $\rn$ or a half-space in $\rn$, as well as  in their
applications to a priori estimates and existence of positive solutions
of Dirichlet problems in bounded domains.

\subsection{A model case}\label{intro11}
We will illustrate our results by applying them to the system
\begin{equation}
\label{model}
\left\{\ {\alignedat2
 -\Delta u  &=u{\hskip 1pt}v^p\left[a(x)v^q-c(x)u^q\right] + \mu(x) u\qquad &\mbox{ in }&\Omega, \\
  -\Delta v &=v{\hskip 1pt}u^p\left[b(x)u^q-d(x)v^q\right] + \nu(x) v\qquad &\mbox{ in }&\Omega,\\
  u&=0,\;v=0\qquad &\mbox{ on }&\partial\Omega\;(\mbox{if }\partial \Omega\not=\emptyset),
 \endalignedat}\right.
 \end{equation}
 where $\Omega\subseteq\rn$,
\begin{equation}
 \label{cond_pq}
p\ge0,\qquad q>0,\qquad q\ge |1-p|,
 \end{equation}
 and the coefficients $a,b,c,d,\mu,\nu$ are H\"older continuous functions in
 $\overline{\Omega}$, with
 \begin{equation}
\label{cond_abcd}
a,b>0\;\mbox{ in }\;\overline{\Omega},\qquad c,d\ge0\;\mbox{ in }\;\overline{\Omega}.
\end{equation}
Observe \re{model} already covers  a large class of systems satisfied by
stationary states of coupled reaction-diffusion equations, or by standing waves of
Schr\"odinger equations in the typical form
 \begin{equation}
\label{stat_states}
\mathbf{U}_t -\Delta \mathbf{U} = \mathcal{A}(x,\mathbf{U}) \mathbf{U}, \qquad\qquad i\mathbf{U}_t -\Delta \mathbf{U} = \mathcal{A}(x,\mathbf{U})\mathbf{U},
 \end{equation}
where $ \mathbf{U} =(u,v)^T$ and $\mathcal{A}$ is a matrix which describes
the replication rate of and the interaction between the quantities $u$ and $v$.
 Let us mention that for $p=0$ and $q=1$ we obtain a Lotka-Volterra system,
while for $p=0$ and $q=2$ we get a system arising in the theory of Bose-Einstein
condensates and nonlinear optics, which has been widely studied in the recent years.
Systems like \re{model} with $p>0$ appear in models of chemical interactions.
A more detailed discussion and references will be given in Section \ref{intro13},
below.

We will almost always assume that the reaction terms in the system dominate the
 absorption terms, in the following sense
\begin{equation}
\label{diff_dominates}
D:=ab-cd\ge 0 \qquad \mbox{ in }\;\Omega.
\end{equation}

The following two theorems effectively illustrate the more general results below.
 Here and in the rest of the article, $\lambda_1(-\Delta, \Omega)$ denotes the first eigenvalue of $-\Delta$ with Dirichlet boundary conditions in~$\Omega$.

\begin{thm}\label{illus1}
Let $\Omega$ be a smooth bounded domain. Assume that \re{cond_pq}--\re{cond_abcd} hold,
\begin{equation}
\label{cond_illus1}
\inf_\Omega D>0,\qquad p+q<\frac{4}{(n-2)_+}
\end{equation}
 and
\begin{equation}
\label{cond_illus2}
\mu,\nu<\lambda_1(-\Delta, \Omega)\;\mbox{ in }\;\overline{\Omega}.
\end{equation}
Then the system \re{model} has a classical solution $(u,v)$ in $\Omega$, such that $u,v>0$ in $\Omega$. All such solutions are uniformly bounded in $L^\infty(\Omega)$.
\end{thm}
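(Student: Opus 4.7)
The plan is to follow the by now classical two-step strategy: first, establish a uniform $L^\infty$ a priori bound by a blow-up and rescaling argument in the spirit of Gidas-Spruck, relying on the Liouville-type theorems in $\rn$ and in a half-space that will have been proved earlier in the paper; then, deduce the existence of a positive solution by a topological degree / fixed-point argument in a cone, exploiting the hypothesis \re{cond_illus2} to exclude both the trivial solution and branches escaping to infinity.

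For the a priori bound, suppose by contradiction that there exists a sequence of positive classical solutions $(u_k,v_k)$ of \re{model} with $M_k:=\|u_k\|_\infty+\|v_k\|_\infty\to\infty$, attained at points $x_k\in\overline\Omega$. Since the dominant reaction term is homogeneous of degree $p+q+1$ in $(u,v)$, the natural scaling parameter is $\lambda_k:=M_k^{-(p+q)/2}\to0$ (here $\lambda_k\to 0$ uses the subcriticality $p+q<4/(n-2)_+$ only indirectly, via the Liouville steps below). Set
\[
\tilde u_k(y)=M_k^{-1}u_k(x_k+\lambda_k y),\qquad \tilde v_k(y)=M_k^{-1}v_k(x_k+\lambda_k y),
\]
on the rescaled domain $\Omega_k:=\lambda_k^{-1}(\Omega-x_k)$. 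Then $0\le \tilde u_k,\tilde v_k\le 1$ and $(\tilde u_k,\tilde v_k)$ solves a system of the same form as \re{model} with coefficients $\tilde a_k(y)=a(x_k+\lambda_k y)$ etc., and with the linear terms $\mu u,\nu v$ replaced by $\lambda_k^2\mu(\cdot)\tilde u_k,\lambda_k^2\nu(\cdot)\tilde v_k$, which tend to $0$ uniformly on compact sets. By standard interior and boundary elliptic estimates, along a subsequence, $(\tilde u_k,\tilde v_k)\to(U,V)$ in $C^2_{\mathrm{loc}}$, and $\Omega_k$ converges either to $\rn$ or to a half-space (to handle the case when $x_k$ approaches $\partial\Omega$, one chooses $x_k$ by a Polacik-Quittner-Souplet type doubling argument or picks it as a near-maximum point and controls its distance to the boundary). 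The limit $(U,V)$ is a nontrivial, bounded, nonnegative solution of the autonomous system
\[
-\Delta U=UV^p(a_\infty V^q-c_\infty U^q),\qquad -\Delta V=VU^p(b_\infty U^q-d_\infty V^q),
\]
with constants satisfying $a_\infty,b_\infty>0$, $c_\infty,d_\infty\ge0$ and $a_\infty b_\infty-c_\infty d_\infty\ge\inf_\Omega D>0$, posed on $\rn$ or a half-space with Dirichlet condition. The Liouville-type theorems of the paper then force $U\equiv V\equiv 0$, contradicting the normalization $\tilde u_k(0)+\tilde v_k(0)=1$.

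With the a priori bound at hand, existence follows from Leray-Schauder degree theory. Reformulate \re{model} as $(u,v)=T(u,v)$, where $T$ is the compact operator that inverts $-\Delta$ under Dirichlet conditions on the right-hand side truncated to nonnegative arguments. The a priori estimate gives a radius $R>0$ such that no solution of the homotopy family lies on $\partial B_R$, and for a suitable deformation (for instance scaling up the reaction terms by a parameter $t$), the degree on $B_R$ equals the degree of a simpler problem, which one computes to be nonzero. On the other hand, the hypothesis $\mu,\nu<\lambda_1(-\Delta,\Omega)$ allows the use of a Krasnoselskii cone argument or a linearization at zero to show that the degree on a small ball around the origin vanishes, so that a nontrivial nonnegative fixed point exists in $B_R\setminus B_\varepsilon$; the strong maximum principle, applied componentwise to the equations rewritten in the form $-\Delta u+K u\ge 0$, $-\Delta v+K v\ge 0$ for $K$ large, then yields $u,v>0$ in $\Omega$.

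The main obstacle is clearly the a priori estimate, and within it the passage to the boundary blow-up limit: one must ensure that the proportionality/Liouville theorems of the paper are strong enough to handle the half-space problem with Dirichlet data for the full range \re{cond_pq}, which is precisely where previous works were limited. The strict positivity $\inf_\Omega D>0$ is used exactly to transfer the ``reaction dominates absorption'' inequality $D\ge0$ to the limit in a quantitative way, so that the limit system falls within the scope of the Liouville results. The remaining steps, once the Liouville theorems are invoked, are standard adaptations of the Gidas-Spruck-de Figueiredo-Lions-Nussbaum scheme.
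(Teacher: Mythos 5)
Your overall strategy (blow-up a priori bound followed by degree theory) is the one the paper uses, and most of your sketch is sound. However, there is one genuine and important gap, at the very point the paper itself flags as the main new difficulty of the a priori estimate.

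You write that after passing to the rescaled limit one obtains ``a nontrivial, bounded, nonnegative solution $(U,V)$'' of the autonomous system in $\rn$ (or a half-space), and then claim that ``the Liouville-type theorems of the paper then force $U\equiv V\equiv 0$, contradicting the normalization.'' This is false when $p>0$ (which \re{cond_pq} allows). The autonomous limit system on $\rn$ is
$-\Delta U=U^rV^p[a_\infty V^q-c_\infty U^q]$, $-\Delta V=V^rU^p[b_\infty U^q-d_\infty V^q]$
with $r=1$, and when $p>0$ every constant pair $(\bar C,0)$ and $(0,\bar C)$ solves it. Theorem~\ref{thm_SpqrLiouv}(i) and Corollary~\ref{rmqNonneg} only rule out \emph{positive} solutions and conclude that nonnegative bounded solutions are exactly these semi-trivial constant pairs; they do not force $(U,V)\equiv(0,0)$. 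A limit of the form $(\bar C,0)$ with $\bar C\ge 2^{-\alpha}$ is perfectly compatible with your normalization and causes no contradiction. The paper devotes a separate ``Step 2'' in the proof of Theorem~\ref{AprioriBound2} (see Remark~\ref{remruleout}) precisely to excluding such semi-trivial limits. The argument there, under the assumption $r\le 1$ (met in \re{model} where $r=1$) and a structural assumption on the lower-order terms, revisits the rescaled system on a large ball $B_R$, derives a linear inequality $-\Delta\tilde v_j\ge c_0\,\tilde v_j$ with a constant $c_0>0$ independent of $R$, and tests against the first Dirichlet eigenfunction of $B_R$ to conclude $\tilde v_j\equiv 0$ on $B_R$ for large $R$, a contradiction. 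This step is not a standard ingredient of the Gidas--Spruck scheme and cannot be dispensed with: if you only invoke the $\rn$ Liouville theorem, the contradiction does not follow. (Under the alternative hypothesis \re{hypApriori5} the paper excludes semi-trivial limits differently, by showing that at the maximum point of $u$ the components blow up at comparable rates; either way, a dedicated argument is needed.) Once this gap is repaired, the half-space limiting case is handled as you indicate via the Phragm\'en--Lindel\"of reduction $U=KV$ of Theorem~\ref{thm_general} plus the scalar half-space Liouville theorem, and the existence step via the Krasnoselskii cone index (Theorem~\ref{theofixed}) with the homotopy $\hat h_i = h_i + At(1+\cdot)$ and the $\mathcal{S}=u^{1/2}v^{1/2}$ trick goes through essentially as you sketch.
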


\begin{thm}\label{illus2} Assume that \re{cond_pq}--\re{cond_abcd} hold, 
$\mu=\nu=0$, and $a,b,c,d$ are constants. Let $(u,v)$ be a nonnegative classical solution of \re{model}.
\begin{enumerate}
\item If $\Omega=\rn$ and $D\ge0$ then either $u\equiv0$, or $v\equiv0$, or
    $ u\equiv Kv$ for some unique constant $K>0$. 
\item If $\Omega=\rn$, $D>0$ and $\displaystyle p+q<\frac{4}{(n-2)_+}$, then for some nonnegative constant $C\ge 0$\\
 $${ (u,v)\equiv (C,0)\quad \text{ or }\quad (u,v)\equiv (0,C)}.$$
 If $p=0$ then $C=0$.
 \item If $\Omega$ is a half-space of $\rn$ and $u,v\in L^\infty(\Omega)$, then
  $u=v\equiv0.$
\end{enumerate}
  \end{thm}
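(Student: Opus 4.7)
The overall plan is a two-step reduction: first use the noncooperative structure to show that the components of any nontrivial positive entire solution are proportional, $u\equiv K^*v$ for a specific $K^*>0$, and then substitute into either equation to reduce to a scalar semilinear equation to which classical Liouville theorems apply.

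For part (1), I would begin by applying the strong maximum principle to each equation separately to reduce to $u,v>0$ everywhere in $\R^n$ whenever neither component is identically zero. The key identity is
$$
\mathrm{div}\bigl(v^2\nabla s\bigr)=s\,v^{p+q+2}\,P(s),\qquad P(K):=bK^{p+q}+cK^q-dK^p-a,
$$
for the ratio $s:=u/v$; this follows directly from $\mathrm{div}(v^2\nabla s)=v\Delta u-u\Delta v$ together with the two equations. An algebraic study of $P$, based on the equivalence $P(K)=0\iff K^p=(a-cK^q)/(bK^q-d)=:\phi(K)$ and the computation $\phi'(K)=-qDK^{q-1}/(bK^q-d)^2$, shows that $D\ge 0$ forces $P$ to have a unique positive root $K^*$, with $P<0$ on $(0,K^*)$ and $P>0$ on $(K^*,\infty)$. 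The displayed equation then inherits a maximum principle: at any interior local minimum (respectively, maximum) of $s$ one has $\nabla s=0$ and $\Delta s\ge 0$ (respectively $\le 0$), which forces $P(s)\ge 0$ (respectively $\le 0$), i.e.\ $s\ge K^*$ (respectively $s\le K^*$). If both extrema of $s$ are attained in $\R^n$ we conclude $s\equiv K^*$; more generally, if the infimum alone is attained then $W:=u-K^*v=v(s-K^*)\ge 0$ vanishes at that point, and factoring $F(u,v):=f-K^*g=(u-K^*v)\,\tilde c(x)$ (possible since $F$ vanishes on the line $u=K^*v$) yields a linear homogeneous equation $-\Delta W=\tilde c(x)W$ whose strong maximum principle gives $W\equiv 0$.

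The main technical obstacle is that on $\R^n$ neither extremum of $s$ need be attained. I would handle this via a blow-up argument exploiting the scaling invariance $(u,v)(\cdot)\mapsto\bigl(\lambda^{2/(p+q)}u(\lambda\,\cdot),\lambda^{2/(p+q)}v(\lambda\,\cdot)\bigr)$: take $x_n$ with $s(x_n)\to\inf s$, rescale around $x_n$ with $\lambda_n$ chosen so that $\tilde v_n(0)=1$, and pass to a locally uniform limit $(\tilde u,\tilde v)$ via interior Schauder estimates combined with the elliptic Harnack inequality applied to each equation (viewed as $-\Delta\tilde v_n=\tilde c_n(x)\tilde v_n$ with locally bounded $\tilde c_n$). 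The limit is a positive entire solution of the same system in which $\tilde s=\tilde u/\tilde v$ attains its infimum at the origin, so the previous step forces $\tilde s\equiv K^*$, and hence $\inf s=K^*$; a symmetric argument for $\sup s$ completes part (1).

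For part (2), assume $D>0$ and neither component vanishes. Part (1) gives $u=K^*v$ and substitution into either equation yields
$$
-\Delta v=\lambda v^{p+q+1},\qquad \lambda:=a-c(K^*)^q=K^{*p}\bigl(b(K^*)^q-d\bigr).
$$
The relation $P(K^*)=0$ forces $\lambda>0$ under $D>0$, since $\lambda=0$ gives $D=0$ and $\lambda<0$ gives $ab<cd$. The hypothesis $p+q<4/(n-2)_+$ is exactly the subcriticality $p+q+1<(n+2)/(n-2)_+$ for the Lane-Emden equation, so the Gidas-Spruck Liouville theorem yields $v\equiv 0$, a contradiction. Hence one component vanishes identically; if $v\equiv 0$ then for $p>0$ the first equation reduces to $-\Delta u=0$, giving $u\equiv C\ge 0$ (a nonnegative entire harmonic function), whereas for $p=0$ it becomes $\Delta u=cu^{q+1}\ge 0$, and Osserman's barrier $u(x)\le C(n,q)/\mathrm{dist}(x,\partial\Omega)^{2/q}$ applied on expanding balls forces $C=0$. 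Part (3) follows in the same spirit: I would adapt the proportionality argument to the half-space (Hopf's lemma controls $u/v$ up to $\partial\Omega$) to obtain $u\equiv K^*v$, and then apply the Dancer-type Liouville theorem for bounded positive solutions of $-\Delta v=\lambda v^{p+q+1}$ on $\R^n_+$ with zero Dirichlet data and $\lambda>0$ (moving planes yielding monotonicity in the normal variable, then reduction to a scalar Liouville problem on $\R^{n-1}$); the degenerate case where one component vanishes identically is handled directly by the maximum principle, using the Dirichlet condition together with the boundedness hypothesis.
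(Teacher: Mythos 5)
Your proof takes a genuinely different route from the paper's. The paper establishes $u\equiv Kv$ in $\mathbb{R}^n$ by showing that $W=|u-Kv|$ is \emph{subharmonic} and $Z=\min(u,Kv)$ is \emph{superharmonic} with $-\Delta Z\ge C\,W^\beta Z^r$, then invokes a Liouville lemma for weighted elliptic inequalities (Lemma~\ref{eigenvalue_V}) in which the weight need only be bounded below in average on large annuli --- a property any nontrivial subharmonic function enjoys. You instead work with the ratio $s=u/v$ via the identity $\mathrm{div}(v^2\nabla s)=s\,v^{p+q+2}P(s)$, which is correct and is the route used in earlier work (e.g.\ Lou) on \emph{bounded} domains, where $s$ attains its extrema. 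The paper's introduction singles out the passage from that ratio argument to the whole-space case as the main obstacle and source of novelty.

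The central gap is precisely where you try to cross that obstacle: the blow-up argument meant to handle the case where $\inf s$ is not attained. Normalizing $\tilde v_n(0)=1$ controls the rescaled solutions at a single point, but to pass to a locally uniform limit you need local $L^\infty$ bounds on $\tilde u_n,\tilde v_n$ on, say, $B_1$. You propose to get these from the Harnack inequality applied to $-\Delta\tilde v_n=\tilde c_n(x)\tilde v_n$ with ``locally bounded $\tilde c_n$'', but $\tilde c_n=\tilde u_n^{\,p}\bigl(b\tilde u_n^{\,q}-d\tilde v_n^{\,q}\bigr)$, so bounding the Harnack constant requires exactly the local $L^\infty$ control you are trying to produce --- the reasoning is circular. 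There is no uniform \emph{a priori} bound available in this Liouville setting (indeed Theorem~\ref{thm_Spqr} explicitly covers critical and supercritical $\sigma$, where rescaling limits may well be unbounded), and the paper gets around the lack of compactness by exploiting the sub/superharmonic structure of $W$ and $Z$ rather than by compactness. Absent a fix for this, part~(1), and therefore parts~(2) and~(3) which build on it, is not established. Two smaller remarks: in part~(2), your Osserman argument for $p=0$ requires $c>0$ (if $c=0$ one only gets $u$ harmonic nonnegative, hence constant --- the paper's more careful Corollary~\ref{rmqNonneg} adds the hypothesis $c>0$ for exactly this reason, and the statement of Theorem~\ref{illus2} is slightly imprecise on this point); and in part~(3) your proposed adaptation via Hopf's lemma for $u/v$ near $\partial\mathbb{R}^n_+$ (where it is of indeterminate form $0/0$) is left entirely unjustified, whereas the paper sidesteps the ratio by applying the Phragm\'en--Lindel\"of maximum principle directly to the subharmonic functions $(u-Kv)_\pm$, for which boundedness gives the required sublinear growth at once.
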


  \subsection{A quick overview of our goals and methods}\label{intro12}

 As the previous two theorems show, the two main goals we pursue are:
  \smallskip

(a){\hskip 2mm}obtain classification (or non-existence) results for solutions of \eqref{mainsyst} in $\rn$ or in a half-space of $\rn$. Naturally, to that goal we need to assume some homogeneity of \eqref{mainsyst} in~$(u,v)$.

 \smallskip

(b){\hskip 2mm}prove a priori estimates and existence statements for the Dirichlet problem for \eqref{mainsyst} in bounded domains. The "blow-up" method of Gidas and Spruck yields such results for general nonlinearities $f$ and $g$, whose leading terms are the functions for  which the non-existence theorems in (a) are proved.
\smallskip

This scheme is well-known and has been used widely since the pioneering works \cite{GS1, GS2}. As can be expected, the main effort falls on the classification results in (a). It should be stressed that these classification results must not be viewed only as a step to the existence results in (b), but are of considerable importance in themselves.

It appears that for systems in the whole space or in a half-space,
most methods to prove Liouville type theorems under optimal growth assumptions
are based either on moving planes or spheres and Kelvin transform, and hence
require some rather restrictive cooperativity assumptions (cf.~\cite{Reichel-Zou, FigYang, FigSir, Zou});
or on integral identities such as Pohozaev's identity, and hence require some
variational structure
(cf. \cite{SerrinZou, Sir, SoupletAdv, QScmp, SoupletNHM, DancerWeth}).

However, there are large classes of systems appearing in applications, whose
structure is not treatable by these techniques. One of our  basic observations
is that many such systems have an inherent monotonicity structure expressed by
the following hypothesis
\begin{equation}
\label{condition_fg}
\exists\: K>0\::\:\quad [f(x,u,v)-Kg(x,u,v)][u-Kv]\leq 0  \quad \hbox{ for all $(u,v)\in \mathbb{R}^2$ and $x\in \Omega$, }
\end{equation}
which plays a fundamental role in our nonexistence and classification results.

To fix ideas, we will immediately describe a class of systems that appear in applications, satisfy  \eqref{condition_fg}, but do not seem to be manageable by the well-known methods for establishing Liouville type results. Our techniques naturally extend to even more general systems that satisfy the condition \eqref{condition_fg} (observe that verifying \eqref{condition_fg} for any given system is a matter of simple analysis).

Consider the system
\begin{equation}
\label{system_Spqr}
\left\{\quad{\alignedat2
-\Delta u&=u^rv^p[av^q-cu^q] \\
 -\Delta v&=v^ru^p[bu^q-dv^q].
 \endalignedat}\right.
\end{equation}
In our study of \eqref{system_Spqr} we  always assume that the real parameters $a,b,c,d,p,q,r$ satisfy
\begin{equation}\label{Hyp_abcdpqr}
a, b>0,\quad c,d\ge 0, \qquad p,r\ge 0,\quad q>0,\quad q\ge |p-r|.
\end{equation}

\begin{propo}
\label{Prop-uKv}
Assume (\ref{Hyp_abcdpqr}).
\smallskip

(i) Then  the nonlinearities in system \eqref{system_Spqr} satisfy \eqref{condition_fg}. 
\smallskip

{ (ii) Assume moreover that} $ ab \geq cd$. Then the number $K$ is unique. We have
  $K=1$ if and only if $a+d=b+c$ and  $K>1$ if and only
if $a+d>b+c$. In addition, if $ab>cd$  (resp. $ab=cd$),
then $a-cK^q>0$  (resp. $=0$) and $bK^q-d>0$ (resp. $=0$).
\end{propo}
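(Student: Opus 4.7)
My strategy is to reduce the two-dimensional inequality \eqref{condition_fg} to a one-dimensional comparison via the substitution $t=u/v$ (for $v>0$). A short rearrangement gives
\[
(f-Kg)(u-Kv)=v^{r+p+q+1}(t-K)\,\psi_K(t),\qquad \psi_K(t):=N_K(t)-t^q M_K(t),
\]
with $N_K(t):=at^r+Kdt^p$ and $M_K(t):=ct^r+Kbt^p$. After disposing of the boundary cases $u=0$ and $v=0$ by direct inspection of $f,g$ on the axes, the whole of part (i) reduces to finding $K>0$ such that $\psi_K\ge 0$ on $[0,K]$ and $\psi_K\le 0$ on $[K,\infty)$.

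The core of part (i) will be a \emph{monotonicity lemma}: for every $K>0$, the ratio $H_K(t):=\Lambda_K(t)/t^q$ with $\Lambda_K:=N_K/M_K$ is nonincreasing on $(0,\infty)$. Computing $\Lambda_K'(t)=K(r-p)D\,t^{r+p-1}/M_K(t)^2$ and $M_K(t)N_K(t)=act^{2r}+K(ab+cd)t^{r+p}+K^2bd\,t^{2p}$, the AM--GM estimate $act^{2r}+K^2bdt^{2p}\ge 2K\sqrt{abcd}\,t^{r+p}$ reduces $(\log H_K)'(t)\le 0$ to the single coefficient inequality
\[
(r-p)D\le q\,(\sqrt{ab}+\sqrt{cd})^2,
\]
which in turn follows from $|r-p|\le q$ and $|D|\le ab+cd\le(\sqrt{ab}+\sqrt{cd})^2$. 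Existence of a suitable $K$ is then obtained by imposing $\psi_K(K)=0$, equivalently
\[
(\ast)\qquad aK^r+dK^{p+1}=cK^{r+q}+bK^{p+q+1}.
\]
The difference LHS$-$RHS is positive near $K=0^+$ (lowest exponent carries the positive coefficient $a$ or $d$) and tends to $-\infty$ as $K\to\infty$ (dominated by $-bK^{p+q+1}$ or $-cK^{r+q}$), so IVT supplies a positive root $K$. Combined with the monotonicity lemma and $H_K(K)=1$, this gives the required sign pattern of $\psi_K$.

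For part (ii), uniqueness of $K$ under $D\ge 0$ follows from strict monotonicity of $\Psi(K):=\log\Lambda_K(K)-q\log K$. Setting $N(K):=aK^r+dK^{p+1}$, $M(K):=cK^r+bK^{p+1}$, $\rho:=p+1-r$, a direct computation collapses the logarithmic derivative to
\[
K\Psi'(K)=-\rho D\,\frac{K^{r+p+1}}{N(K)M(K)}-q.
\]
When $-\rho D\le 0$ this is $\le -q<0$. In the remaining case $\rho<0$, $D>0$, a second AM--GM estimate yields $K^{r+p+1}/(NM)\le(\sqrt{ab}+\sqrt{cd})^{-2}$, so the first term is at most $|\rho|(\sqrt{ab}-\sqrt{cd})/(\sqrt{ab}+\sqrt{cd})\le|\rho|=r-p-1\le q-1$ (using $r-p\le q$), whence $K\Psi'\le -1<0$. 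Thus $(\ast)$ has a unique positive root $K$. The characterizations $K=1\iff a+d=b+c$, $K>1\iff a+d>b+c$, and $K<1\iff a+d<b+c$ are then read off from $\Psi(1)=\log((a+d)/(b+c))$ together with monotonicity of $\Psi$. Finally, rewriting $(\ast)$ as $K^r(a-cK^q)=K^{p+1}(bK^q-d)$, the two sides share the same sign; under $D\ge 0$, a short case analysis shows that assuming either side nonpositive contradicts $(\ast)$ unless both vanish simultaneously, which happens precisely when $ab=cd$ and forces $K^q=a/c=d/b$.

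The principal obstacle is the monotonicity lemma for $H_K$: its proof hinges on an AM--GM whose threshold $(\sqrt{ab}+\sqrt{cd})^2$ must dominate $(r-p)D/q$, and the hypothesis $q\ge|p-r|$ is used in a sharp way precisely to guarantee this.
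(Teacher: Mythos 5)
Your proof is correct, and it takes a genuinely different route from the paper's. The paper also reduces to a one-variable problem with $X=u/v$, but its key trick is structural: writing $Kg-f$ in terms of $H_K(X)=AX^{q+m}+BX^q-CX^m-D$ with $m=|r-p|$ and then substituting $t=X^m$, $\ell=q/m\ge1$, which converts $H_K$ into $h_K(t)=At^{\ell+1}+Bt^\ell-Ct-D$ — a \emph{convex} function on $[0,\infty)$ (here is where $q\ge|p-r|$ enters). Convexity plus $h_K<0$ near $0$ and $h_K\to\infty$ then give a unique zero and the sign change for free, and uniqueness of $K$ is handled by a separate contradiction argument with an intermediate point $Y\in(K_1,K_2)$. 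You instead factor $\psi_K(t)=M_K(t)\bigl(\Lambda_K(t)-t^q\bigr)$ and attack the ratio $H_K=\Lambda_K/t^q$ head-on via its logarithmic derivative, closing the estimate with AM--GM on $M_KN_K$; the threshold $(\sqrt{ab}+\sqrt{cd})^2$ dovetails exactly with $|r-p|\le q$ and $|D|\le ab+cd$. The same log-derivative/AM--GM mechanism then handles uniqueness directly through strict monotonicity of $\Psi(K)=\log\Lambda_K(K)-q\log K$, rather than by the paper's ad hoc contradiction. Your approach is slightly more computational but more systematic: the convexity substitution is a one-shot device, whereas the monotone-ratio viewpoint makes the role of $q\ge|p-r|$ and of $D$ transparent, and it gives both the sign pattern and uniqueness from the same single inequality. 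One caveat you should make explicit in a final write-up: the claim that the two sides of $(\ast)$ ``share the same sign'' is not merely a sign statement — they are literally equal as numbers (both equal $Kg(K,1)-f(K,1)$ up to a positive factor), which is what your subsequent case analysis actually uses when ruling out one side nonpositive while the other is positive.
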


The proof of this proposition is of course elementary (though tedious), and
 will be given in the appendix.
There is no explicit formula for $K$, except in some special cases.
For instance,
when
 $p=0$ and $r=1$, one easily computes that $K=(\frac{a+d}{b+c})^{1/q}$.
We do not know whether the hypothesis $q\geq |p-r|$ in \eqref{Hyp_abcdpqr}
is necessary for our classification results.  However,
we observe that even the simplest systems of the type \eqref{system_Spqr}, with $c=d=0$, may have a different solution set from what we obtain here, 
when \eqref{condition_fg} (and hence $q\geq |p-r|$) is not satisfied, as shown by the result in \cite[Theorem~1.4(iii)]{QS}.
On the other hand, many models of which we are aware satisfy this hypothesis.
\smallskip

Let us now discuss the use of \eqref{condition_fg}.
 The point of this hypothesis is that for any solution $(u,v)$ of
\eqref{mainsyst},   the nonnegative functions $(u-Kv)_+$ and $(Kv-u)_+$ are {\it subharmonic} in~$\Omega$, which allows for applications of various forms of the maximum principle.

In particular, if \eqref{condition_fg} holds, the domain $\Omega$ is bounded and $u=Kv$ on $\partial \Omega$, then the classical maximum principle implies the classification property
\begin{equation}
\label{uKv}
u \equiv Kv\quad\hbox{ in }\Omega,
\end{equation}
which reduces the system to a single elliptic equation.
Our basic goal will be to prove \eqref{uKv} for unbounded domains like the whole
space $\mathbb{R}^n$ or a half-space, where additional work and hypotheses are needed. We comment briefly on these next.

First, when $\Omega$ is a half-space and $u,v$ have sublinear growth at infinity, \eqref{uKv} is a consequence of the Phragmén-Lindel\"of maximum principle (a tool which is not often encountered in the context of Liouville theorems for nonlinear systems). This classifies bounded solutions in a half-space, which in particular is sufficient for the application of the blow-up method. We obtain nonexistence and classification results for general unbounded solutions in a half-space as well - then some supplementary assumptions are unavoidable. The proofs of these more general results use properties of spherical means of functions in a half-space, as well as  a general nonexistence result for weighted elliptic inequalities in cones from \cite{armsir}.

Next, proving \eqref{uKv} in the whole space is where we encounter most difficulties. Probably the most important and novel observation we make is that under our assumptions the functions $Z=\min(u,Kv)$ and $W=|u-Kv|$ satisfy the inequality
\begin{equation}
\label{anticoersyst}
-\Delta Z \ge cW^\mu Z^r \qquad\hbox{in }\;\rn,
\end{equation}
 and, in some cases,
\begin{equation}
\label{coersyst}
\Delta W \ge c Z^p W^{\gamma} \qquad\hbox{in }\;\rn,
\end{equation}
 for appropriate $\mu,\gamma\ge 1$, $c>0$.   It is worth observing that in \eqref{anticoersyst} the {\it superharmonic} function $Z$ satisfies an anti-coercive elliptic inequality with a {\it subharmonic} weight $W^\mu$, while in \eqref{coersyst} the subharmonic function $W$ satisfies a coercive inequality with a weight which is a power of a superharmonic function. We do not know of any other work where such combinations of inequalities and weights appear. By using properties of subharmonic and superharmonic functions and by adapting the methods for proving nonexistence of positive solutions of inequalities from \cite{armsir} (for \eqref{anticoersyst}) and from \cite{Lin2}  (for \eqref{coersyst}), we show that under appropriate restrictions on the exponents $p,r$, we have $W\equiv0$.

\smallskip

The idea of showing nonexistence results by first proving the property (\ref{uKv})
was used earlier in~\cite{Lou, DLS} 
for a particular Lotka-Volterra type system, and more recently in \cite{QS,Fa,Damb}, where
 some partial use of  \eqref{condition_fg} with $K=1$ was also made. To our
 knowledge the present paper is the first systematic study of systems whose
nonlinearities satisfy~\eqref{condition_fg}. Our results very strongly improve
on previous ones, both in the generality of the systems considered, and in the
results obtained, even when applied to  particular systems. Our methods,
in particular the above observations, appear to be new.
\smallskip

Finally, as far as step (b) above is concerned, we recall that  uniform a priori
estimates and existence of positive solutions of Dirichlet problems
associated with asymptotically homogeneous systems in bounded domains can be obtained via the rescaling (or blow-up) method
of Gidas and Spruck \cite{GS1} combined with known topological degree arguments (see for instance~\cite{CMM, Lou, DLS, FigYang, FigSir, Zou}
for systems). Applying this method requires nonexistence theorems  for the limiting "blown-up" system in the whole space
and in the half-space. We will follow the same scheme here; however, as an additional and nontrivial difficulty with respect to the cases treated in~\cite{CMM, Lou, DLS, FigYang, FigSir}, we will need to deal with the fact that
many of the limiting systems that we obtain
 admit semi-trivial solutions in the whole space, of the form $u=0,v=C$ or $u=C,v=0$, with $C>0$
(for instance system~(\ref{system_Spqr}) with $p,r>0$).
Additional arguments are thus needed to rule out the occurrence of such limits (see Remark~\ref{remruleout}).
\smallskip

\subsection{Some systems that appear in applications, to which our results apply}\label{intro13}

The results in Section \ref{intro11} apply in particular  to the following two systems which we already mentioned
$$
(LV)\!\left\{{\alignedat2
-\Delta u&=u\bigl[a(x)v-c(x)u+\mu(x)\bigr]\\
 -\Delta v&=v\bigl[b(x)u-d(x)v+\nu(x)\bigr],
 \endalignedat}\right.
 \qquad
 (BE)\!\left\{{\alignedat2
-\Delta u&=u\bigl[a(x)v^2-c(x)u^2+\mu(x)\bigr]\\
 -\Delta v&=v\bigl[b(x)u^2-d(x)v^2+\nu(x)\bigr].
 \endalignedat}\right.
$$

The first of these two systems is of Lotka-Volterra type, and appears as a model of symbiotic interaction of biological species. In (LV) the logistic terms $(\mu-cu)u$ and $(\nu-dv)v$ take into account the reproduction {and the limitation of resources} within each species, while the $uv$-terms represent the interaction  between the two species.
A positive solution then corresponds to a coexistence state -- see for instance  \cite{KoLe, Lou, DLS} and the references therein for more details on the biological background.

The system (BE) arises in models of Bose-Einstein condensates which involve two different quantum states, as well as in nonlinear optics. In particular, one gets (BE) when looking for standing waves
of an evolutionary cubic  Schr\"odinger system.
In the present case, the interspecies interaction is attractive, while the self-interaction is repulsive or neutral,
leading to phenomena of symbiotic solitons.
We refer to  \cite{Perez-Garcia, Che} for a description of physical phenomena that lead to such systems. These references include  systems with spatially inhomogeneous coefficients.

For (LV) and (BE), we get the following result as a direct consequence of Theorem~\ref{illus1}.

\begin{cor}
\label{AprioriBoundLV}
Assume $\Omega$ is a smooth bounded domain,   $a, b, c, d,\mu,\nu$ are H\"older continuous in {$\overline\Omega$}, 
and \eqref{cond_abcd}, \eqref{cond_illus2} hold. Assume further that $n\le 5$ for (LV),  $n\le 3$ for (BE).

If
\begin{equation}
\label{hypApriori2_0b}
\inf_{x\in \Omega}\,\left[{a(x)b(x)}-{c(x)d(x)}\right]>0,
\end{equation}
then there exists at least one positive classical solution of (LV) or (BE) in $\Omega$, such that $u=v=0$ on $\partial\Omega$. All such solutions are bounded above by a constant which depends only on $\Omega$, and the uniform norms of $a, b, c, d,\mu,\nu$.
\end{cor}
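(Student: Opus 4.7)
The plan is to check that the two systems (LV) and (BE) fit exactly into the framework of the model system \eqref{model}, and then to invoke Theorem~\ref{illus1} directly. Concretely, (LV) is the special case of \eqref{model} with $p=0$ and $q=1$: indeed, $u v^{0}[a(x)v^{1} - c(x) u^{1}] + \mu u = a(x)uv - c(x)u^{2} + \mu u$ reproduces the right-hand side of (LV), and similarly for the $v$-equation. In the same way, (BE) corresponds to \eqref{model} with $p=0$ and $q=2$, since $u v^{0}[a(x) v^{2} - c(x) u^{2}] + \mu u = a(x) uv^{2} - c(x) u^{3} + \mu u$.

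Next I would verify the structural hypotheses of Theorem~\ref{illus1}. Condition \eqref{cond_pq} requires $p\ge 0$, $q>0$ and $q\ge |1-p|$; with $p=0$ these reduce to $q\ge 1$, which holds in both cases ($q=1$ for (LV), $q=2$ for (BE)). Conditions \eqref{cond_abcd} and \eqref{cond_illus2} on the signs of $a,b,c,d$ and on the smallness of $\mu,\nu$ are assumed in the corollary. The positivity of $D=ab-cd$ required in \eqref{cond_illus1} is precisely the assumption \eqref{hypApriori2_0b}. The remaining subcriticality condition in \eqref{cond_illus1}, namely $p+q<4/(n-2)_+$, reads $1<4/(n-2)_+$ for (LV), which is equivalent to $n\le 5$, and $2<4/(n-2)_+$ for (BE), equivalent to $n\le 3$; these are precisely the dimensional restrictions in the hypotheses of the corollary.

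Once all hypotheses of Theorem~\ref{illus1} are seen to hold, its conclusion yields directly both the existence of a classical positive solution of (LV) (respectively (BE)) with zero Dirichlet boundary condition, and a uniform $L^\infty$ bound on the set of all such positive solutions. Since the constant produced by Theorem~\ref{illus1} depends only on $\Omega$ and on the H\"older data $a,b,c,d,\mu,\nu$ (as can be read off from the blow-up argument sketched in Section~\ref{intro12}), the bound depends only on $\Omega$ and on the uniform norms of the coefficients, as claimed. There is no real obstacle here: the whole content of the corollary lies in recognizing (LV) and (BE) as instances of \eqref{model} and checking that the parameter ranges in the statement are precisely what is needed so that the subcriticality condition $p+q<4/(n-2)_+$ is fulfilled.
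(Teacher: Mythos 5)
Your proof is correct and follows exactly the route the paper takes: the corollary is stated there as a direct consequence of Theorem~\ref{illus1}, and your verification that (LV) and (BE) are the special cases $p=0$, $q=1$ and $p=0$, $q=2$ of \eqref{model}, together with the check that $p+q<4/(n-2)_+$ translates into $n\le 5$ (resp.\ $n\le 3$), is precisely the required argument. The dependence of the bound on $\Omega$ and the uniform norms of the coefficients is as stated explicitly in the more general Theorem~\ref{AprioriBound}(i), which underlies Theorem~\ref{illus1}.
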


Observe that \eqref{hypApriori2_0b} cannot be removed, as simple examples show. For instance, if $a=b=c=d$ and 
$ \mu=\nu =0$ 
 in (LV) or (BE), by adding up the two equations we see that any nonnegative solution of the Dirichlet problem vanishes identically.

In spite of the huge number of works on Lotka-Volterra systems (giving a reasonably complete bibliography is virtually impossible), this corollary represents an improvement on known results for (LV) --- see \cite[Theorem 7.4]{DLS},
where a more restrictive assumption than (\ref{hypApriori2_0b}) was made on the functions $a, b, c, d$.

For the system (BE), most of the previously known statements on a priori estimates and existence
concerned the case of reversed interactions ($a,b$ positive and $c,d$ negative; or $a,b,c,d$ negative); see \cite{BDW, BPB, DWW, TTVW, QScmp, FigSir}.
The self-repulsive case which we consider here was also studied in \cite{Lin}, where positive solutions are constructed by variational methods under the additional hypothesis that $a=b$ and $a,b,c,d$ are large constants.
Thus Corollary \ref{AprioriBoundLV} completes these works, providing optimal results for the case of attractive interspecies interaction, and repulsive or neutral intraspecies interaction.

\medskip {
Finally, we point out the following third example, which is a special case of a class of systems arising in the modelling of general
 chemical reactions
\begin{equation}
\label{systChem1}
\left\{\quad{\alignedat2
u_t-\Delta u&=uv\bigl[a(x)v-c(x)u\bigr], &\qquad&t>0,\ x\in \Omega,\\
\gamma v_t-\Delta v&=uv\bigl[b(x)u-d(x)v\bigr], &\qquad&t>0,\ x\in \Omega,\\
u&=v=0, &\qquad&t>0,\ x\in \partial\Omega,\\
 \endalignedat}\right.
\end{equation}
where $\Omega$ is a bounded domain and $\gamma>0$.
See for instance equation~(1) in \cite{DF} and equations (3.1), (3.5)  in \cite{Pierre},
 as well as the other examples and references given in these works (note that more general power-like behaviour in the nonlinearities can be considered as well).
Specifically, system (\ref{systChem1}) in the case $a(x)=d(x)$ and $b(x)=c(x)$ describes
the evolution of the concentrations of two chemical {molecules} $A$ and $B$
in the reversible reaction
$$A+2B \ {{k_1\atop \longrightarrow} \atop {\longleftarrow\atop k_2}} \ 2A+B,$$
under inhomogeneous catalysis with reaction speeds $k_1=a(x)$, $k_2=b(x)$,
and absorption on the boundary.
(Note that the net result of the reaction is $ B \ {{{}\atop \longrightarrow} \atop {\longleftarrow\atop {}}}  A$
and that the molecules $A, B$ should thus be isomeric.)
In this case, it is easy to see by considering $u+v$ that
the only nonnegative equilibrium is $(u,v)=(0,0)$. Hence,  Theorem~\ref{illus1} shows the existence of a bifurcation phenomenon for the
stationary system associated with (\ref{systChem1}), precisely at $a(x)=d(x)$ and $b(x)=c(x)$. Indeed, assume $n\le 3$ and let the H\"older continuous functions $a,b,c,d$ satisfy $ab= cd+\varepsilon$, $a,b>0$ and $c,d\ge0$ in~$\overline{\Omega}$.
Then there exists a positive steady state beside the trivial one, for each $\eps>0$.

It is worth noticing that the discussion in  \cite{Pierre} (see eqn.~(3.6) in that paper) provides a physical explanation
as to why the case $ab> cd$ differs strongly from $ab\le cd$.
As is pointed out in \cite{Pierre},  in the case of constant coefficients, $ab\le cd$ guarantees that the
system \eqref{systChem1} exhibits control of mass (we refer to \cite{Pierre}
for definitions), or, in other words,  the absorption in the system controls the
reaction. Under this assumption, it can be shown that any global and bounded solution converges uniformly to $(0,0)$ as $t\to\infty$
(however, whether or not some solutions may blow up in finite time is a highly nontrivial question in general -- see \cite{Pierre} and the references therein).
It should then come as no surprise that the case $ab>cd$, in which no  control of mass is available, is delicate to study, even in the stationary (elliptic) case.}

\mysection{Main results}\label{mainres}

We will only consider classical solutions, for simplicity. Observe that under our hypotheses on $f$, $g$, any continuous weak-Sobolev solution of \eqref{mainsyst} is actually classical, by standard elliptic regularity.

In what follows, we say that $(u,v)$ is semi-trivial  if $u\equiv 0$ or $v\equiv 0$.
We say that $(u,v)$ is positive if $u,v>0$ in the domain where a given system is set.

\subsection{Classification results in the whole space}\label{mainres21}

In this section we study the system \eqref{system_Spqr} in $\rn$.
The following theorem  plays a pivotal role in the paper and is probably its  most original result.

\begin{thm}
\label{thm_Spqr}
Assume (\ref{Hyp_abcdpqr}) holds and $ab \geq cd $. Let $K>0$ be the constant from Proposition \ref{Prop-uKv} and $(u,v)$ be a positive classical solution of \eqref{system_Spqr} in $\rn$.

(i) Assume that
\begin{equation}\label{HypthmSpqr}
r\le \frac{n\phantom{_+}}{(n-2)_+}.
\end{equation}
If $p+q<1$, assume in addition that $(u,v)$ is bounded. Then $u\equiv Kv$.

(ii) Assume that
\begin{equation}\label{HypthmSpqr2}
p\le \frac{2\phantom{_+}}{(n-2)_+} \quad\hbox{and}\quad c, d>0.
\end{equation}
If $q+r\le 1$, assume in addition that $(u,v)$ is bounded. Then $u\equiv Kv$.
\end{thm}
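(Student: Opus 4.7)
The plan is to prove $W := |u - Kv| \equiv 0$, which by definition of $K$ yields $u \equiv Kv$. Setting $w := u - Kv$ and $Z := \min(u, Kv) = \tfrac{1}{2}(u + Kv - W)$, Kato's inequality applied to $w$ together with the sign condition \re{condition_fg} of Proposition~\ref{Prop-uKv} gives
\begin{equation*}
\Delta W = \Delta|w| \geq \mathrm{sgn}(w)\, \Delta w = -\mathrm{sgn}(w)(f - Kg) = |f - Kg| \geq 0,
\end{equation*}
so $W$ is a nonnegative subharmonic function in $\rn$. The symmetric computation based on $-\Delta Z = \tfrac{1}{2}[(f+Kg) + \Delta W]$ yields $-\Delta Z \geq \max(f, Kg) \geq 0$, so $Z$ is a nonnegative superharmonic function, positive in $\rn$ because $u, v > 0$.

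The central step is to extract from the explicit form \re{system_Spqr} the two key differential inequalities
\begin{equation*}
-\Delta Z \geq c_1\, W^{p+q}\, Z^r \qquad\text{and}\qquad \Delta W \geq c_2\, Z^p\, W^{\gamma} \quad\text{in }\rn,
\end{equation*}
for some $\gamma \geq 1$ and positive constants $c_1, c_2$ (the second inequality requiring in addition $c, d > 0$); the non-degenerate case $ab > cd$ is the main one, the case $ab = cd$ being simpler since then $f$ and $Kg$ factor explicitly through $(Kv)^q - u^q$ and a direct argument applies. On $\{u < Kv\}$ one uses $u = Z$, $v = (Z + W)/K \geq W/K$, and the estimate $a v^q - c u^q \geq (a - cK^q)\, v^q \geq 0$ (with $a - cK^q \geq 0$ by Proposition~\ref{Prop-uKv}(ii)) to bound $-\Delta Z = f$ below by $c_1 W^{p+q} Z^r$; the region $\{u > Kv\}$ is handled symmetrically via $bK^q - d \geq 0$. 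The second inequality follows by algebraically factoring $(u - Kv)$ out of $f - Kg$ --- possible because $u = Kv$ is a common root of $f$ and $Kg$ in the variables $u, v$ --- and then estimating the remaining factor from below using $u, Kv \geq Z$ and the positivity of $c, d$.

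Assume now for contradiction that $W \not\equiv 0$. For part (i), the positive superharmonic function $Z$ satisfies the anti-coercive inequality above with the non-trivial subharmonic weight $W^{p+q}$; the exponent $r \leq n/(n-2)_+$ is exactly the Serrin threshold for the classical Liouville theorem for $-\Delta u \geq u^r$ in $\rn$, and the nonexistence result of \cite{armsir} for weighted inequalities of this kind --- whose proof combines spherical-mean analysis of the superharmonic $Z$ with the mean-value property of the subharmonic weight $W$ --- forces $Z \equiv 0$, a contradiction. For part (ii), the subharmonic $W$ satisfies the coercive inequality with weight a power of the positive superharmonic $Z$; adapting the integral test-function argument of \cite{Lin2} and using lower bounds on the spherical means of $Z$ supplied by its superharmonicity, the condition $p \leq 2/(n-2)_+$ is exactly what is needed to close the estimate and force $W \equiv 0$. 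In the borderline regimes ($p + q < 1$ in (i) and $q + r \leq 1$ in (ii)) the weights are too weak for these scaling arguments to work directly, and the supplementary hypothesis $(u, v) \in L^\infty$ provides the upper control on $W$ needed to restore coercivity.

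The main obstacle is the derivation of the pair of inequalities above with positive constants. Neither $W$ nor $Z$ is $C^1$ across the interface $\{u = Kv\}$, so everything must be done in the distributional sense via Kato, and the correct exponents and constants depend crucially on the sign information from Proposition~\ref{Prop-uKv}(ii). A secondary, more conceptual, difficulty is that the resulting inequalities lie outside the classical Liouville frameworks: the anti-coercive one carries a \emph{subharmonic} weight while the coercive one carries a \emph{superharmonic} weight, an unusual combination that requires grafting the sub/super-harmonicity of the accompanying function onto the nonexistence machinery of \cite{armsir} and \cite{Lin2}.
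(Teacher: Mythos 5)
Your proposal follows exactly the paper's strategy: introduce $Z=\min(u,Kv)$ and $W=|u-Kv|$, use Kato's inequality plus the sign information of Proposition~\ref{Prop-uKv} to show $W$ is subharmonic and $Z$ superharmonic, derive the two coupled weighted inequalities, and conclude via adapted Liouville theorems in the spirit of \cite{armsir} (subharmonic weight) and \cite{Lin2} (superharmonic, quadratically decaying weight). The thresholds $r\le n/(n-2)_+$ and $p\le 2/(n-2)_+$ are used precisely as you indicate. Two places deserve care when writing the details out. First, the exponent in $-\Delta Z\ge c\,W^\beta Z^r$ should be $\beta=\max(p+q,1)$, not $p+q$: when $p+q<1$ the argument produces the lower bound via $v^{p+q-1}\ge C$ (using boundedness of $(u,v)$), not via $(Kv-u)^{p+q-1}$, and this is also what guarantees $W^\beta$ remains subharmonic (as $\beta\ge1$) so that Lemma~\ref{eigenvalue_V} applies; the boundedness hypothesis therefore supplies a \emph{lower} bound on a power of $v$ rather than upper control on $W$. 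Second, in part (ii) the coercive inequality $\Delta W\ge C Z^p W^\gamma$ in the paper is derived under the extra hypothesis $r>p$, so one must first show that this case reduction is legitimate (if $q+r\le1$ one falls back to part (i) via boundedness and $r\le1$; otherwise either $p+q<1$ gives $r>p$ directly, or $p+q\ge1$ allows one to assume $r>n/(n-2)\ge p$). These are details rather than gaps, and the core argument you propose is the one in the paper.
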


We stress that, remarkably, Theorem \ref{thm_Spqr} includes {\it critical and supercritical} cases, since no
upper bound is imposed on the {\it total degree} $\sigma:=p+q+r$ of the system \eqref{system_Spqr}.

Theorem \ref{thm_Spqr} provides  a classification of positive solutions
of (\ref{system_Spqr}) in $\rn$. Specifically, the set of positive solutions of (\ref{system_Spqr}) is given by $(u,v)=(KV,V)$, where $V$ is either a  positive harmonic function, hence constant (if $ab=cd$) or $V$ is a solution of
\begin{equation}\label{gseq}-\Delta V=c_1 V^\sigma\qquad \mbox{in }\; \mathbb{R}^n,
\end{equation}
with $c_1=K^p (bK^q-d)>0$ (if $ab>cd$, by Proposition~\ref{Prop-uKv}).
It is well known that positive solutions of \eqref{gseq} exist precisely if $n\ge 3$ and $\sigma\ge (n+2)/(n-2)$.
They are moreover unique up to rescaling and translation, and explicit,  if $\sigma=(n+2)/(n-2)$ (see~\cite{CGS}).
For some related classification results for cooperative systems with $c=d=0$ in the critical case, which use the method of moving planes, see \cite{GuoLiu, LiMa}.
\smallskip

Theorem \ref{thm_Spqr} significantly improves  the results from~\cite{QS} concerning system (\ref{system_Spqr}) (see \cite[Theorem~2.3]{QS}).
There, only the case $a=b$, $c=d$ (hence $K=1$) was considered and, for that case, much stronger restrictions than (\ref{HypthmSpqr}) or (\ref{HypthmSpqr2}) were imposed.
\smallskip

Combining Theorem \ref{thm_Spqr} with known results on scalar equations yields the following striking Liouville type result for the noncooperative system (\ref{system_Spqr}), with an optimal growth assumption on the nonlinearities.

\begin{thm}
\label{thm_SpqrLiouv}
Assume (\ref{Hyp_abcdpqr}), $ab>cd$, and
$$ \sigma:=p+q+r<\frac{n+2\phantom{_+}}{(n-2)_+}.$$
\smallskip

(i) Then system (\ref{system_Spqr}) does not admit any positive, classical, bounded solution.
\smallskip

(ii)  Assume in addition $$p+q\ge 1, \qquad\hbox{or}\qquad p\le \frac{2}{(n-2)_+},\qquad\hbox{or}\qquad \sigma\le \frac{n\phantom{_+}}{(n-2)_+}
 $$
(note this hypothesis is  satisfied  in each one of the "physical cases" $q\ge1$ or $n\le 4$).
 Then system (\ref{system_Spqr}) does not admit any positive classical (bounded or unbounded) solution.
\end{thm}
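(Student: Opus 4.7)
The plan is to reduce the proof to the classical Gidas--Spruck Liouville theorem for a scalar semilinear equation. Given a positive classical solution $(u,v)$ of \re{system_Spqr} in $\rn$, I first invoke Theorem \ref{thm_Spqr} to conclude that $u\equiv Kv$, where $K>0$ is the number furnished by Proposition \ref{Prop-uKv}. Substituting into the second equation gives
\begin{equation*}
-\Delta v = v^{r+1}u^p(bu^q - dv^q) = K^p(bK^q-d)\,v^{\sigma}\qquad\text{in }\rn,
\end{equation*}
and since $ab>cd$, Proposition \ref{Prop-uKv} yields $bK^q-d>0$, so the coefficient $c_1:=K^p(bK^q-d)$ is strictly positive. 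As $v>0$ and $\sigma<(n+2)/(n-2)_+$, the Gidas--Spruck theorem (\cite{GS1}) forces $v\equiv 0$, contradicting positivity.

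For part (i) the solution is assumed bounded, so the auxiliary boundedness caveats in either case of Theorem \ref{thm_Spqr} come for free, and only the structural condition needs to be verified. A short case split suffices: either $r\le n/(n-2)_+$, so Theorem \ref{thm_Spqr}(i) applies directly; or $r>n/(n-2)_+$, in which case $\sigma<(n+2)/(n-2)_+$ forces $p+q<2/(n-2)_+$, hence $p\le 2/(n-2)_+$, and Theorem \ref{thm_Spqr}(ii) applies when $c,d>0$. In the degenerate subcase where $c$ or $d$ vanishes, one equation becomes cooperative in the standard sense and a classical scalar or cooperative-system Liouville argument finishes the job.

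For part (ii), the three alternative hypotheses are precisely tailored to remove the need for boundedness in Theorem \ref{thm_Spqr}: $\sigma\le n/(n-2)_+$ trivially implies $r\le n/(n-2)_+$; $p\le 2/(n-2)_+$ matches case (ii) of Theorem \ref{thm_Spqr}; and $p+q\ge 1$ rules out the exceptional regime $p+q<1$ of case (i), after which a further case split mirroring part (i) covers the remaining subranges. The ``physical'' observations are routine checks: $q\ge 1$ gives $p+q\ge 1$, while for $n\le 4$ the quantity $2/(n-2)_+$ is large enough that whenever $p+q<1$ one has $p<1\le 2/(n-2)_+$. The main obstacle is therefore not in this deduction, which amounts to matching hypotheses and quoting the scalar Liouville theorem, but in Theorem \ref{thm_Spqr} itself, whose proof hinges on the delicate coercive and anti-coercive inequalities \re{anticoersyst}--\re{coersyst} for $Z=\min(u,Kv)$ and $W=|u-Kv|$.
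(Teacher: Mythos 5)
Your overall strategy is the right one and coincides with the paper's: show $u\equiv Kv$ via Theorem~\ref{thm_Spqr}, reduce to the scalar equation $-\Delta v=c_1 v^\sigma$ with $c_1>0$ (by Proposition~\ref{Prop-uKv}), and invoke the Gidas--Spruck Liouville theorem (which, incidentally, is the one in \cite{GS2}, not \cite{GS1}). However, there are two genuine gaps in the way you try to feed your situation into Theorem~\ref{thm_Spqr}.

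First, your case split on whether $r\le n/(n-2)_+$ or $r>n/(n-2)_+$ is both unnecessary and, in the latter branch, not actually carried out. The hypothesis $q\ge|p-r|$ in \eqref{Hyp_abcdpqr} gives $r\le p+q$, hence $r\le\sigma/2$; combined with $\sigma<(n+2)/(n-2)_+$ this forces $r\le n/(n-2)_+$ always (for $n\ge3$ one has $\sigma/2<(n+2)/(2(n-2))\le n/(n-2)$; for $n\le 2$ the bound is vacuous). This is exactly the small but crucial observation the paper makes, and it means Theorem~\ref{thm_Spqr}(i) is always applicable for bounded solutions, so part~(i) needs no subcases at all. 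Your alternative branch relying on Theorem~\ref{thm_Spqr}(ii) is therefore never needed, and a good thing too: that branch requires $c,d>0$, and your dismissal of the degenerate case ``$c$ or $d$ vanishes'' via ``a classical scalar or cooperative-system Liouville argument'' is a hand-wave, not an argument. Such a cooperative Liouville theorem under the present growth hypotheses is not simply off the shelf.

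Second, in part~(ii) your treatment of the alternative $\sigma\le n/(n-2)_+$ does not work as you describe it. You route this case through Theorem~\ref{thm_Spqr}(i) by observing $r\le n/(n-2)_+$, but Theorem~\ref{thm_Spqr}(i) still requires $(u,v)$ bounded when $p+q<1$, and no bound is assumed in part~(ii). The paper instead handles $\sigma\le n/(n-2)_+$ by an entirely separate and shorter route: Lemma~\ref{minuv}(ii) gives $-\Delta Z\ge CZ^\sigma$ for $Z=\min(u,Kv)$, and Lemma~\ref{eigenvalue_V} (with the constant potential) then yields $Z\equiv0$, hence a contradiction with positivity, with no boundedness and no reduction to $u\equiv Kv$ at all. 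Without this alternative argument, your proof of part~(ii) is incomplete in the regime $\sigma\le n/(n-2)_+$, $p+q<1$, $(u,v)$ unbounded. (You do correctly check the ``physical'' implications at the end; the remaining alternative $p\le 2/(n-2)_+$ combined with $\sigma>n/(n-2)$ yields $q+r>1$, which, as in the paper, is what removes the boundedness caveat in Theorem~\ref{thm_Spqr}(ii), though there you should also say a word about the $c,d>0$ hypothesis.)
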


Once positive solutions are ruled out, it is natural to ask about nontrivial nonnegative solutions
(and this will be important in view of our applications to a priori estimates, below). The following result is a simple consequence of Theorem \ref{thm_SpqrLiouv} and the strong maximum principle.

\begin{cor}
\label{rmqNonneg}
Under the hypotheses of Theorem~\ref{thm_SpqrLiouv}(i) (resp., \ref{thm_SpqrLiouv}(ii)),
assuming in addition $q+r\ge 1$,
any nonnegative bounded (resp., nonnegative) solution of (\ref{system_Spqr}) is in the form
$(C_1,0)$ or $(0,C_2)$, where $C_1, C_2$ are nonnegative constants.

Moreover, if in addition $p=0$, $r>0$ and $c>0$ (resp., $d>0$), then $C_1=0$ (resp., $C_2=0$),
whereas, if $r=0$, then $C_1=C_2=0$.
\end{cor}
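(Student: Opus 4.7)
The plan is to combine Theorem~\ref{thm_SpqrLiouv} with the strong maximum principle to force each component of a nonnegative solution to be either identically zero or strictly positive everywhere, and then to analyze the scalar equation that survives when one component vanishes.

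Write the first equation of \eqref{system_Spqr} as $-\Delta u + V(x)\,u = a\,u^r v^{p+q} \ge 0$ with potential $V(x) = c\,u^{q+r-1}(x)\,v^p(x)$. Since $q+r\ge 1$ by hypothesis and $u,v$ are continuous, $V$ is locally bounded on $\rn$; the strong maximum principle for the linear operator $-\Delta+V$ then yields the dichotomy $u\equiv 0$ or $u>0$ on $\rn$, and the same reasoning for $v$ gives the analogous alternative. If both $u$ and $v$ were strictly positive, $(u,v)$ would be a positive solution (bounded, in case~(i)), contradicting Theorem~\ref{thm_SpqrLiouv}; hence $u\equiv 0$ or $v\equiv 0$. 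Assume $u\equiv 0$, the other case being symmetric under the exchange $(a,c)\leftrightarrow (b,d)$. Substituting $u\equiv 0$ into the second equation (with the natural convention $0^0=1$) reduces it to $-\Delta v = 0$ when $p>0$ or $p=d=0$, and to $\Delta v = d\,v^{q+r}$ when $p=0$ and $d>0$. In the first case, Liouville's theorem for nonnegative harmonic functions gives $v\equiv C_2\ge 0$. In the second, $v$ is subharmonic, so for any $x_0\in\rn$ the spherical mean $\bar v(r):=|\partial B_r|^{-1}\int_{\partial B_r(x_0)} v\,dS$ is nondecreasing with $\bar v(0)=v(x_0)$; Jensen's inequality (using convexity of $t\mapsto t^{q+r}$ for $q+r\ge 1$) yields $(r^{n-1}\bar v')'\ge d\,r^{n-1}\bar v(r)^{q+r}\ge d\,r^{n-1}\,v(x_0)^{q+r}$, and two integrations give $\bar v(r)\ge v(x_0)+\frac{d\,v(x_0)^{q+r}}{2n}\,r^2$. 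In case~(i), this is incompatible with $v\in L^\infty$ unless $v(x_0)=0$, whence $v\equiv 0$ since $x_0$ is arbitrary; in case~(ii), feeding the polynomial lower bound back into the radial inequality and iterating produces, via a Keller-Osserman type argument, a finite-radius blow-up of $\bar v$, again forcing $v\equiv 0$.

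The "moreover" part follows by direct substitution into \eqref{system_Spqr}. For $(C_1,0)$: the first equation reduces to $-c\,C_1^{q+r}=0$ when $p=0$ (forcing $C_1=0$ if $c>0$), and the second equation reduces to $b\,C_1^{p+q}=0$ when $r=0$ (forcing $C_1=0$, since $b>0$). The case $(0,C_2)$ is identical by symmetry.

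The main obstacle is the sub-case $u\equiv 0$, $p=0$, $d>0$ in the unbounded setting of case~(ii), where one must execute the spherical-mean bootstrap carefully to reach a finite-radius blow-up of $\bar v$; the remaining arguments are routine applications of the strong maximum principle and the Liouville theorem for harmonic functions.
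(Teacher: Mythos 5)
Your proof reconstructs what the paper intends (the authors leave the argument unwritten, noting only that it is a ``simple consequence of Theorem~\ref{thm_SpqrLiouv} and the strong maximum principle''): rewrite the first equation as $-\Delta u + V u = a\,u^r v^{p+q}\ge 0$ with $V = c\,u^{q+r-1}v^{p}$ (nonnegative, continuous and locally bounded precisely because $q+r\ge 1$), apply the strong maximum principle to get the dichotomy $u\equiv 0$ or $u>0$ in $\rn$ (and likewise for $v$), invoke Theorem~\ref{thm_SpqrLiouv} to exclude the case of two positive components, and then analyze the residual scalar equation satisfied by the surviving component. The plug-in computation for the ``moreover'' clause is correct as well.

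One step, however, does not close in full generality. In case~(ii), your sub-case $u\equiv 0$, $p=0$, $d>0$ (which can only arise with $r>0$, since $r=0$ would already force $v\equiv 0$ via the first equation) leads to $\Delta v = d\,v^{q+r}$, and the Keller--Osserman bootstrap you invoke needs $q+r>1$. At the boundary $q+r=1$ (hence $\sigma=1$), the residual equation is $\Delta v = d\,v$, which has classical unbounded nonnegative solutions such as $v(x)=e^{\sqrt{d}\,x_1}$; the pair $(0,v)$ then satisfies \eqref{system_Spqr} under all the stated hypotheses yet is not of the form $(0,C_2)$ with $C_2$ constant. So your bootstrap cannot conclude there, and for the same reason the corollary as literally stated seems to require the strict inequality $q+r>1$ (equivalently $\sigma>1$ when $p=0$), in line with the superlinearity assumed in Theorem~\ref{AprioriBound} and the other main results of the paper. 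Granting that implicit assumption, your proof is complete and follows exactly the intended route.
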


We end this subsection with several remarks on the hypotheses in the above theorems.
It is not known whether or not the restrictions (\ref{HypthmSpqr}), (\ref{HypthmSpqr2}) are optimal  for the property $u\equiv Kv$. However, the following result shows that
this property may fail if $p$ and $r$ are large enough.

\begin{thm}
\label{thm_SpqrCounter}
Let $n\ge 3$ and consider system (\ref{system_Spqr}) with $p=r>(n+2)/(n-2)$, $q>0$ and $a=b=c=d=1$.
Then there exists a positive solution such that $u/v$ is not constant.
\end{thm}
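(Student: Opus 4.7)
The plan is to reduce the problem to a scalar supercritical elliptic equation in $\rn$. With $p=r$ and $a=b=c=d=1$, the system \eqref{system_Spqr} becomes
$$-\Delta u = (uv)^r(v^q - u^q), \qquad -\Delta v = -(uv)^r(v^q - u^q),$$
so $\Delta(u+v)=0$. Accordingly, I would seek solutions of the special form $u+v \equiv S$ with $S>0$ a constant to be chosen later. Setting $v = S-u$, the system then reduces to the single scalar problem
\begin{equation}\label{scalarplan}
-\Delta u = F_S(u) := u^r(S-u)^r\bigl[(S-u)^q - u^q\bigr] \quad \text{in } \rn, \qquad 0 < u < S.
\end{equation}
The nonlinearity $F_S$ is $C^1([0,S])$, antisymmetric about $S/2$ (in particular $F_S(0)=F_S(S/2)=F_S(S)=0$), strictly positive on $(0,S/2)$, and satisfies $F_S(u) = S^{q+r}u^r(1+o(1))$ as $u \to 0^+$. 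Any positive non-constant solution of \eqref{scalarplan} with values in $(0,S/2)$ gives, through $v = S-u$, a positive classical solution of \eqref{system_Spqr} with $u \not\equiv v$, hence with $u/v$ non-constant.

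To produce such a scalar solution I would use radial ODE shooting. Writing $w(\rho)=u(|x|)$ and considering
\begin{equation}\label{radialplan}
w''(\rho) + \tfrac{n-1}{\rho}\, w'(\rho) + F_S(w(\rho)) = 0, \qquad w(0) = a \in (0, S/2), \quad w'(0) = 0,
\end{equation}
the positivity of $F_S$ on $[0,a]$ implies that $\rho^{n-1} w'$ is non-increasing, so $w$ itself is non-increasing and stays in $[0,a]$ so long as it remains non-negative. The substitution $w = S^{-\alpha}\tilde w$ with $\alpha := (q+r)/(r-1)>0$ transforms \eqref{radialplan} into
$$\tilde w'' + \tfrac{n-1}{\rho}\,\tilde w' + \tilde w^r\, h(\varepsilon \tilde w) = 0, \qquad \varepsilon := S^{-\alpha-1},$$
where $h(x) := (1-x)^r[(1-x)^q - x^q]$ is smooth near $x=0$ with $h(0)=1$. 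As $S \to \infty$ (hence $\varepsilon \to 0$), this is a small, regular perturbation, uniformly on compact sets, of the classical supercritical pure-power radial equation $\tilde w'' + \tfrac{n-1}{\rho}\tilde w' + \tilde w^r = 0$.

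Since $r > (n+2)/(n-2)$, the limiting equation is well known to admit a one-parameter family of positive, global radial solutions decaying to $0$ at infinity (standard supercritical shooting). The main technical obstacle is to transfer this property to the perturbed equation, i.e.\ to exhibit, for $S$ sufficiently large, an initial datum $a$ for which the solution of \eqref{radialplan} stays strictly positive on $[0,\infty)$ and tends to $0$ at infinity. I would accomplish this by combining continuous dependence of the radial ODE on the parameter $\varepsilon$ with a Pohozaev-type energy functional that rules out finite-radius vanishing in the supercritical regime (exploiting that the effective nonlinearity $\tilde w^r h(\varepsilon \tilde w)$ remains $\ge \tfrac12\tilde w^r$ throughout the trajectory for $\varepsilon$ small). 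Once $w$ is obtained, setting $u(x) := w(|x|)$ and $v(x) := S - u(x)$ produces the desired positive classical solution of \eqref{system_Spqr} on $\rn$ whose ratio $u/v$ is non-constant.
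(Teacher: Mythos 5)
Your proposal follows essentially the same route as the paper's own proof: add the two equations to see that $u+v$ is positive harmonic, hence constant; reduce to a radial ODE shooting problem for the scalar supercritical equation $-\Delta u = f(u)$ with $v = \text{const} - u$; and rule out a first zero of the shooting solution by a Pohozaev-type sign condition. The paper's version is leaner, though: it normalizes $u+v\equiv 1$ (harmless by the scale invariance of the system), shoots with a small initial datum $\varepsilon$, and checks the Pohozaev inequality $Xf(X)-\frac{2n}{n-2}F(X)\ge 0$ for $0<X<\varepsilon$ directly from the expansion of $f$ near $0$ (using $p>(n+2)/(n-2)$), so the auxiliary parameter $S\to\infty$, the rescaling, and the perturbation-off-the-pure-power-equation step in your plan are superfluous.
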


We remark that if $c=0$ or $d=0$ then we can show that
at least one of the components dominates the other, without restrictions on $p$ or $r$.

\begin{propo}
\label{propo_cdzero}
Assume (\ref{Hyp_abcdpqr}) and $c=0$ or $d=0$.
 Assume that either $(u,v)$ is  bounded or $\max(p+q,q+r)>1$. 
Then either $u\ge Kv$ in $\mathbb{R}^n$ or $u\le Kv$ in $\mathbb{R}^n$.
\end{propo}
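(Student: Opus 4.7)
By the symmetry of system \eqref{system_Spqr} under the involution $(u,a,c) \leftrightarrow (v,b,d)$ (which sends $K$ to $1/K$ and preserves (\ref{Hyp_abcdpqr})), I may assume $c = 0$ without loss of generality. Set $w := u - Kv$, $W^\pm := w^\pm$ and $\Omega_\pm := \{\pm w > 0\}$. Proposition~\ref{Prop-uKv} gives $(u - Kv)(f - Kg) \le 0$, so $W^+$ and $W^-$ are both nonnegative subharmonic functions on $\rn$. Since $c = 0$ makes $-\Delta u = au^r v^{p+q} \ge 0$, the component $u$ is superharmonic, and using $bK^q > d$ (from Proposition~\ref{Prop-uKv}, since $ab > cd = 0$) one directly checks that $v$ is strictly superharmonic on $\Omega_+$. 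A standard viscosity argument then shows that $Z := \min(u, Kv)$ is a positive superharmonic function on $\rn$.

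I argue by contradiction, assuming both $W^+ \not\equiv 0$ and $W^- \not\equiv 0$. Consider first the subcase $p+q > 1$ of the hypothesis $\max(p+q, q+r) > 1$. On $\Omega_-$, writing $Kv = u + W^-$ and using the super-additivity of $t \mapsto t^{p+q}$, I obtain
\begin{equation*}
-\Delta u = au^r v^{p+q} \ge aK^{-(p+q)} u^r \bigl(u^{p+q} + (W^-)^{p+q}\bigr) \ge c_1\, u^r (W^-)^{p+q} \quad\text{on } \Omega_-,
\end{equation*}
a weighted anti-coercive inequality for the positive superharmonic function $u$ with nontrivial subharmonic weight $(W^-)^{p+q}$. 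Since $W^-$ itself is a nontrivial nonnegative subharmonic function on $\rn$, this contradicts the Liouville-type nonexistence for such weighted inequalities, \`a la Armstrong--Sirakov~\cite{armsir}. The subcase $q + r > 1$ is handled symmetrically, working from the equation for $v$ on $\Omega_+$.

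In the bounded case, $W^+$ and $W^-$ are bounded nonnegative subharmonic functions on $\rn$ with disjoint open positivity sets. Each connected component of $\Omega_\pm$ must be unbounded, for otherwise the maximum principle applied to $W^\pm$ on such a bounded component (where it vanishes on the boundary) would force $W^\pm \le 0$ inside. The \textbf{main obstacle} is now to close the contradiction, since in dimension $n \ge 3$ a bounded subharmonic function on $\rn$ need not be constant and the Phragm\'en--Lindel\"of principle may fail on general unbounded domains. I plan to overcome this by invoking the weighted anti-coercive inequality $-\Delta Z \ge c\, W^\mu Z^r$ (for some $\mu \ge 1$) alluded to in the introduction, valid in our setting thanks to $c = 0$ and the super-additivity arguments above, and applying the machinery of~\cite{armsir} to force one of $W^\pm$ to vanish identically, yielding the claimed dichotomy $u \ge Kv$ or $u \le Kv$ on all of $\rn$.
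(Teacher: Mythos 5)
Your proposal hinges on applying the Armstrong--Sirakov nonexistence result (Lemma~\ref{eigenvalue_V} here) to the anti-coercive inequality $-\Delta u \ge c_1\,u^r (W^-)^{p+q}$. That lemma, however, requires $r \le n/(n-2)_+$, and Proposition~\ref{propo_cdzero} makes \emph{no such restriction on $r$} (nor on $p$). For $n\ge 3$ and $r$ large, the argument simply does not apply, and the same obstruction recurs in your sketch of the bounded case (again the weighted anti-coercive inequality $-\Delta Z \ge cW^\mu Z^r$ needs $r\le n/(n-2)_+$). This is a genuine gap: the boundedness / $\max(p+q,q+r)>1$ hypothesis cannot compensate for a missing dimensional restriction in a lemma you are invoking.

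The paper goes in the opposite direction and works with a \emph{coercive} inequality, which is dimension-free. With the symmetric normalization $d=0$, $c>0$ (and citing~\cite{QS} for $c=d=0$), one assumes $(Kv-u)_+\not\equiv 0$, and since it is subharmonic its ball averages have a positive liminf; because $v$ is superharmonic when $d=0$, one deduces $v\ge L>0$ everywhere. Then the algebraic lower bound of Lemma~\ref{elemlem}(ii) yields
$$\Delta (u-Kv)_+ \ \ge\ C\, (u-Kv)_+^{\,q+(p\vee r)}\qquad\hbox{in }\rn,$$
and the Keller--Osserman type Lemma~\ref{eigenvalue_V2} (part (ii) if $q+(p\vee r)=\max(p+q,q+r)>1$, part (i) if $(u,v)$ is bounded) forces $(u-Kv)_+\equiv 0$, i.e.\ $u\le Kv$. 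Note how this makes precise use of exactly the proposition's hypothesis $\max(p+q,q+r)>1$ (resp.\ boundedness), with no constraint on $r$ or the dimension. If you wish to rescue your plan, you would need to replace the anti-coercive step by a coercive one of this kind, which is what the pointwise lower bound $v\ge L>0$ enables; without it your inequality has the wrong structure to close the argument in full generality. You would also need to treat the degenerate case $c=d=0$ separately, as the paper does.
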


\begin{rmq}
\label{rmqRad}
 If $u$ and $v$ are assumed to be  radially symmetric, it is easy to show that we  have $u\ge Kv$ or $u\le Kv$, only under the assumptions (\ref{Hyp_abcdpqr}) and $ ab \ge cd$ {(see the end of section~4.1).}
\end{rmq}

Next, we recall that the property $u=Kv$ is known to be true  for all nonnegative solutions of (\ref{system_Spqr}) provided $p=0$ (so that the system is cooperative), and $q\ge r>0$, $c, d>0$, $q+r>1$.
The proof of this fact (see \cite{Lou, Damb, Fa})
relies on the observation that the function $w=(u-Kv)_+$ satisfies
$\Delta w\ge c_1(u^{q+r-1}+v^{q+r-1})w$ for some constant $c_1>0$, which leads to the ``coercive'' elliptic inequality
\begin{equation}
\label{IneqCoerc}
\Delta w\ge c_1w^{q+r}\quad \mbox{ in }\;\mathbb{R}^n.
\end{equation}
It then follows from a classical result of Keller and Osserman (see also Brezis~\cite{Br}) that $w\equiv 0$, hence $u\equiv Kv$ (after exchanging the roles of $u,v$).
The same idea applies in the half-space, under homogeneous Dirichlet boundary conditions.
However, this argument fails if $p>0$, or if $c$ or $d=0$,
since one does not obtain a coercive equation like~(\ref{IneqCoerc}).
Nevertheless, we will be able to use some more general coercivity properties in the proof of Theorem \ref{thm_Spqr} for $p\le 2/(n-2)$, see \eqref{coersyst}.

 Finally, we recall that the case $ab<cd$ in system (\ref{system_Spqr}) is very different, since the absorption features then become dominant.
For instance, if $p=0$ and $ab<cd$, then any nonnegative solution of (\ref{system_Spqr}) has to be trivial if  $q+r>1$,
in sharp contrast with the case $ab>cd$ (when nontrivial solutions $(u,Ku)$ exist if $q+r\ge (n+2)/(n-2)$).
Indeed, by Young's inequality, one easily checks that $w=u+tv$ satisfies
\eqref{IneqCoerc} for suitable $t, c_1>0$, hence $w\equiv 0$. An interesting question, though outside the scope of this paper, is  to determine the optimal conditions on $p, q, r\ge 0$ under which classification results can be proved, when $ab<cd$.

\subsection{Classification results  in the half-space}\label{mainres22}

We begin with a rather general classification result for system (\ref{mainsyst}) on the half-space $\mathbb{R}^n_+=\{x\in \rn\::\: x_n>0\}$,
under the basic structure assumption~(\ref{condition_fg}).

A function
$u\, :\, \mathbb{R}^n_+\rightarrow \mathbb{R}$ is said to have \textit{sublinear growth} if
$u(x)=o(|x|)$ as $|x|\to\infty$, $x\in \mathbb{R}^n_+$.
The following theorem  classifies positive solutions with sublinear growth in $\mathbb{R}^n_+$, and implies nonexistence results  by reducing the system to a scalar equation.
It will thus be sufficient,
along with Liouville type results for bounded solutions in the whole space (stated in Section \ref{mainres21}),
in order to prove a priori estimates via the blow-up method.

\begin{thm}
\label{thm_general}
Assume that (\ref{condition_fg}) holds.
Let $(u,v)$ be a classical solution of~(\ref{mainsyst}) in $\mathbb{R}^n_+$,
such that $u=Kv$ on $\partial \mathbb{R}^n_+$.
If $u$ and $v$ have sublinear growth, then $$u\equiv Kv\quad\mbox{ in }\; \mathbb{R}^n_+.$$
\end{thm}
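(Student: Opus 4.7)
The plan is to reduce the system to a scalar maximum principle problem for the linear combination $W := u - Kv$, whose sign is controlled by the structural hypothesis \eqref{condition_fg}, and then to invoke a Phragm\'en-Lindel\"of type maximum principle in the half-space. The endgame is to show $W \equiv 0$ in $\mathbb{R}^n_+$.

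First I would observe that, from \eqref{mainsyst}, $-\Delta W = f(x,u,v) - Kg(x,u,v)$. Multiplying by $W$ and invoking the structural hypothesis \eqref{condition_fg} gives $W\,\Delta W \ge 0$ pointwise in $\mathbb{R}^n_+$; equivalently $\Delta W \ge 0$ on $\{W>0\}$ and $\Delta W \le 0$ on $\{W<0\}$. From this a standard Kato-type argument applied to the classical (hence $C^2$) function $W$, for instance by approximating $s \mapsto \max(s,0)$ with a sequence of smooth nondecreasing convex functions $\phi_\delta$, computing
$$\Delta \phi_\delta(W) \;=\; \phi_\delta''(W)|\nabla W|^2 + \phi_\delta'(W)\Delta W \;\ge\; 0,$$
and passing to the distributional limit, shows that both $W^+ := \max(W,0)$ and $W^- := \max(-W,0)$ are subharmonic in $\mathbb{R}^n_+$.

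Next I would collect the boundary and growth data. Both $W^\pm$ are non-negative and continuous on $\overline{\mathbb{R}^n_+}$; they vanish on $\partial\mathbb{R}^n_+$ because of the prescribed boundary condition $u=Kv$ there. Moreover, since $u(x)=o(|x|)$ and $v(x)=o(|x|)$ as $|x|\to\infty$ by hypothesis, we have
$$W^\pm(x) \;\le\; |W(x)| \;\le\; u(x)+K v(x) \;=\; o(|x|).$$

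Finally I would invoke the classical Phragm\'en-Lindel\"of maximum principle for the half-space: an upper semicontinuous subharmonic function on $\mathbb{R}^n_+$ which is non-positive on $\partial\mathbb{R}^n_+$ and grows sublinearly at infinity must be non-positive in $\mathbb{R}^n_+$. Applied in turn to $W^+$ and to $W^-$, this yields $W^\pm\le 0$ in $\mathbb{R}^n_+$, and combined with $W^\pm\ge 0$ we conclude $W^+\equiv 0\equiv W^-$, hence $W\equiv 0$, i.e.\ $u\equiv Kv$ in $\mathbb{R}^n_+$.

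The main obstacle is the Phragm\'en-Lindel\"of step. The naive linear barrier $\epsilon x_n$, used on expanding half-balls $B_R^+$ with $R\to\infty$, controls the flat boundary and the portion of the spherical boundary where $x_n$ is comparable to $R$, but the ``equatorial strip'' (where $x_n$ is small relative to $R$) requires more care: there one must exploit both the continuity of $W^\pm$ up to $\partial\mathbb{R}^n_+$ (where they vanish) and the quantitative sublinear bound $W^\pm(x)=o(|x|)$, for example by refining the barrier or by applying a Poisson-type representation on $B_R^+$. The Kato step, by contrast, is routine given $W\in C^2(\mathbb{R}^n_+)$.
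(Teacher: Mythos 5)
Your proposal is correct and follows essentially the same route as the paper: set $w=u-Kv$, use \eqref{condition_fg} to see that $w\Delta w\ge0$, deduce that (a smoothed version of) $w^+$ and $w^-$ are subharmonic, and apply the Phragm\'en--Lindel\"of maximum principle for sublinear subharmonic functions in the half-space to each of them. The only cosmetic difference is that the paper works with a fixed smooth convex $\psi$ with $0\le\psi(t)\le t^+$ (so $\psi(w)$ is classically subharmonic and no distributional Kato step is needed), and the Phragm\'en--Lindel\"of statement you worry about at the end is simply quoted from Protter--Weinberger, so no barrier construction is reproduced in the paper.
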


\noindent This theorem is a consequence of the Phragmèn-Lindel\"of
maximum principle.

\begin{rmq}
\label{rmq1.1}
Observe that we did not make any assumption on the sign  or on the growth of the nonlinearities $f$ and $g$. Therefore, supercritical nonlinearities can be allowed.
\end{rmq}

Theorem \ref{thm_general} can be used to deduce Liouville type theorems for noncooperative systems. We have for instance the following result, which applies to the system (\ref{system_Spqr}).

\begin{cor}
\label{thm_half2}
Assume that (\ref{condition_fg}) holds  for some $K>0$, and there exist constants $c>0$ and $p>1$ such that
$$f(x,Ks,s)=cs^p,\quad s\ge 0.$$
Then system (\ref{mainsyst}) has no nontrivial, bounded, classical nonnegative solution in $\mathbb{R}^n_+$, such that $u=v=0$ on the boundary $\partial \mathbb{R}^n_+$.
\end{cor}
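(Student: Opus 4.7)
The plan is to reduce the system to a scalar Liouville problem for which a well-known nonexistence result applies. Let $(u,v)$ be a bounded nonnegative classical solution of \eqref{mainsyst} on $\mathbb{R}^n_+$ with $u=v=0$ on $\partial \mathbb{R}^n_+$. Since $(u,v)$ is bounded, both components have sublinear growth at infinity, and clearly $u=Kv$ on $\partial \mathbb{R}^n_+$ because both vanish there. Hypothesis \eqref{condition_fg} is in force by assumption, so Theorem \ref{thm_general} applies and yields the proportionality
\begin{equation*}
u \equiv Kv \qquad \mbox{in } \mathbb{R}^n_+.
\end{equation*}

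Next I would substitute this into the first equation of \eqref{mainsyst}. Using the explicit identity $f(x,Ks,s)=cs^p$ given in the hypothesis, the equation $-\Delta u=f(x,u,v)$ becomes $-K\Delta v=c v^p$ in $\mathbb{R}^n_+$. Writing $\lambda=c/K>0$, the component $v$ therefore satisfies the scalar boundary-value problem
\begin{equation*}
\left\{\ {\alignedat2
 -\Delta v &= \lambda v^p &\quad&\mbox{in } \mathbb{R}^n_+,\\
 v &= 0 &\quad&\mbox{on } \partial \mathbb{R}^n_+,
\endalignedat}\right.
\end{equation*}
with $v$ nonnegative and bounded.

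The last step is to invoke the classical Liouville theorem for bounded nonnegative solutions of the Lane--Emden equation in a half-space with homogeneous Dirichlet boundary conditions (Dancer, via the moving planes method): for any $p>1$, the only such solution is $v\equiv 0$. This result uses boundedness crucially but requires no subcriticality on $p$, so it fits our setting. Once $v\equiv 0$, the relation $u\equiv Kv$ gives $u\equiv 0$, and the solution is trivial, as claimed.

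The only point to watch is that the two ingredients (Theorem \ref{thm_general} and the scalar half-space Liouville theorem) apply under exactly the regime of our hypotheses: sublinearity and the boundary condition $u=Kv$ on $\partial \mathbb{R}^n_+$ for the first, and boundedness with $p>1$ for the second. Both are automatic here, so the argument is essentially a clean concatenation and no further obstacle is expected.
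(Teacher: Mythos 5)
Your proof follows the same route as the paper: reduce to $u\equiv Kv$ via Theorem~\ref{thm_general} (boundedness gives sublinear growth, vanishing on the boundary gives $u=Kv$ there), then invoke a scalar Liouville theorem for $-\Delta v=\lambda v^p$ in the half-space with homogeneous Dirichlet condition. The one inaccuracy is the attribution of that scalar result: Dancer's moving-planes argument in~\cite{Dan} does \emph{not} cover all $p>1$ --- it requires a subcritical-type restriction on the exponent, as do~\cite{GS1, Fa1}. The statement you need, namely nonexistence of positive bounded solutions for \emph{every} $p>1$, was only established recently by Chen, Lin and Zou~\cite{CLZ}, and that is the reference the paper relies on. Without it, your argument would be incomplete for exponents $p$ beyond the range Dancer treats, since the Corollary imposes no upper bound on $p$. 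Once the correct reference is substituted, the proof is complete and coincides with the paper's.
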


This corollary is obtained  by combining Theorem \ref{thm_general} with a recent result \cite{CLZ}, which
guarantees that, for any $p>1$, the scalar equation $-\Delta u=u^p$ has no positive, bounded, classical solution in the half-space, which vanishes on the boundary (this was known before under additional restriction on $p$, see~\cite{GS1, Dan, Fa1}).
\smallskip

Under further assumptions on the nonlinearities, namely positivity
(one may think of $c=d=0$ in \eqref{system_Spqr}), we obtain classification results in the half-space, without making growth restrictions on the solutions.

\begin{thm}
\label{thm_systeme_S_rsd_generalise}
Let $p,q,r,s\geq 0$.
We assume that $f,g$ satisfy condition (\ref{condition_fg}) for some constant $K>0$ and that, for some $c>0$,
\begin{equation}
\label{condition_fg2}
f(x,u,v)\geq c \;u^r \;v^{p} \text{\quad  and \quad} g(x,u,v)\geq c\; u^{q}\; v^s \quad \text{for all $u,v\geq 0$ and $x\in \mathbb{R}^n_+$}.
\end{equation}
Let $(u,v)$ be a nonnegative classical solution of (\ref{mainsyst}) in $\mathbb{R}^n_+$,
such that $u=Kv$ on $\partial \mathbb{R}^n_+$.
\begin{itemize}
\item [(i)] Either $u\leq Kv$ or $u\geq Kv$ in $\mathbb{R}^n_+$.
\item [(ii)] If
\begin{equation}
\label{condition_rnsd}
 r \leq \frac{n+1+p}{n-1}\quad\hbox{or}\quad q \leq \frac{1+s}{n-1},
\end{equation}
and
\begin{equation}
\label{condition_rnsd2}
s \leq\frac{n+1+q}{n-1} \quad\hbox{or}\quad p \leq \frac{1+r}{n-1},
\end{equation}
then either
$u\equiv Kv $ or $ (u,v) $  is semitrivial.
\item[(iii)] If \eqref{condition_rnsd}-\eqref{condition_rnsd2} hold and  $\min(p+r,q+s)\leq (n+1)/(n-1)$, then $(u,v)$ is semitrivial.
\end{itemize}
\end{thm}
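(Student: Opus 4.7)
Set $W=u-Kv$. By condition (\ref{condition_fg}), multiplying the equation $-\Delta W=f-Kg$ by $W$ yields $W(-\Delta W)\leq 0$, so Kato's inequality implies that $W^+=(u-Kv)_+$ and $W^-=(Kv-u)_+$ are both nonnegative subharmonic functions in $\mathbb{R}^n_+$, vanishing on $\partial\mathbb{R}^n_+$ by the boundary hypothesis.

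\emph{Proof of (i).} Suppose by contradiction that both $W^\pm$ are nontrivial. Introduce the weighted spherical means on the upper hemisphere,
$$m^\pm(r)=\int_{S^{n-1}_+}W^\pm(r\omega)\,\omega_n\,d\sigma(\omega),$$
where $\omega\mapsto\omega_n$ is the first Dirichlet eigenfunction of $-\Delta_{S^{n-1}}$ on $S^{n-1}_+$ (with eigenvalue $n-1$). Green's identity on the half-ball $B^+_R$ applied with the harmonic function $x_n$ (which vanishes on the flat part of $\partial B^+_R$) yields
$$R^{n+1}\bigl(m^\pm(R)/R\bigr)'=\int_{B^+_R}x_n\,\Delta W^\pm\,dx\geq 0,$$
so $r\mapsto m^\pm(r)/r$ is nondecreasing and, being eventually positive, forces at least linear growth of $m^\pm$. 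On the other hand, (\ref{condition_fg2}) combined with the elementary lower bounds $u\geq W^+$ on $\{W>0\}$ and $v\geq W^-/K$ on $\{W<0\}$ yields
$$-\Delta v\geq c(W^+)^q v^s\ \text{ on }\{W>0\},\qquad -\Delta u\geq c\,u^r(W^-)^p\ \text{ on }\{W<0\}.$$
Testing these inequalities against $x_n$ on $B^+_R$ and using the disjointness $W^+W^-\equiv 0$, one derives growth requirements on $m^+$ and $m^-$ which are mutually incompatible, yielding the desired contradiction.

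\emph{Proof of (ii).} By (i) we may assume $u\leq Kv$, and set $V:=Kv-u=W^-\geq 0$. If $V\equiv 0$, then $u\equiv Kv$ and we are in the first alternative. Otherwise $V$ is a nontrivial nonnegative subharmonic function in $\mathbb{R}^n_+$ vanishing on the boundary, and $v\geq V/K$ together with (\ref{condition_fg2}) gives
$$-\Delta u=f\geq cu^r v^p\geq cK^{-p}\,u^r V^p\quad\text{in }\mathbb{R}^n_+.$$
Thus $u$ is a positive supersolution of a weighted elliptic inequality whose weight $V^p$ is a power of a subharmonic function vanishing on $\partial\mathbb{R}^n_+$ (hence behaving at least linearly in $x_n$ in the averaged sense produced by the monotonicity argument above). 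Under (\ref{condition_rnsd}), the exponent $r$ lies at or below the associated Liouville threshold $(n+1+p)/(n-1)$, so the Liouville-type result for weighted elliptic inequalities in cones from \cite{armsir} applies and forces $u\equiv 0$, that is, $(u,v)$ is semitrivial. The symmetric argument with $v$ in place of $u$ handles the case (\ref{condition_rnsd2}).

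\emph{Proof of (iii) and main obstacle.} It only remains to exclude the alternative $u\equiv Kv\not\equiv 0$ of (ii). Substituting $v=u/K$ in the two equations yields both $-\Delta u\geq c_1 u^{p+r}$ and $-\Delta u\geq c_2 u^{q+s}$ in $\mathbb{R}^n_+$, and since $\min(p+r,q+s)\leq (n+1)/(n-1)$, the scalar half-space Liouville theorem from \cite{armsir} applied to whichever of these two inequalities has the smaller exponent forces $u\equiv 0$. The principal difficulty is part (i): without any growth assumption on $(u,v)$, the Phragm\'en-Lindel\"of argument of Theorem~\ref{thm_general} is unavailable, and the dichotomy must be extracted purely from the pointwise structural conditions (\ref{condition_fg})--(\ref{condition_fg2}), which is the key novel ingredient of the half-space classification for general, possibly unbounded, solutions.
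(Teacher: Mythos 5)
The central gap is that you build your argument around the \emph{subharmonicity} of $W^\pm=(u-Kv)^\pm$, whereas the paper's proof of this theorem hinges on the \emph{superharmonicity} of $u$ and $v$ themselves (immediate from the sign condition \eqref{condition_fg2}, since $f,g\ge 0$), together with the key Lemma~\ref{lem_lambda}(ii), which converts a nonzero limit $L(u)=\lim_R[u](R)>0$ of the half-spherical mean of a nonnegative \emph{superharmonic} function into a \emph{pointwise} lower bound $u(x)\ge \frac{L(u)}{[x_n]}x_n$. This conversion from an averaged quantity to a pointwise bound, available only for superharmonic functions, is exactly what is needed to produce a legitimate weight $h(x)\ge c|x|^\kappa$ on a cone and invoke the Liouville lemma (Lemma~\ref{lem_estimation_integrale_Liouville}). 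Your part~(ii) tries to use $V=W^-$ as the weight and argues that, being subharmonic and vanishing on the boundary, $V$ ``behaves at least linearly in $x_n$ in the averaged sense''; but this is not enough. A subharmonic function can vanish on large regions, so averaged linear growth does not translate into a pointwise lower bound $V\ge c\,x_n$ on a cone, and the hypothesis of the cone Liouville lemma fails. The paper sidesteps this by working with $v$ (superharmonic), not with $W^-$ (subharmonic).

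Part~(i) has the same underlying problem. Your sketch claims that if both $m^+$ and $m^-$ are nontrivial, ``one derives growth requirements on $m^+$ and $m^-$ which are mutually incompatible,'' but this is left unsubstantiated and I do not see how to make it rigorous without the superharmonicity of $u$ and $v$. The paper's argument for~(i) is cleaner and essential: since $u,v$ are superharmonic, $[u](R)$ and $[v](R)$ are nonincreasing with limits $L(u),L(v)\ge 0$; if both were positive then Lemma~\ref{lem_lambda}(ii) gives $u\ge c x_n$, $v\ge c x_n$ pointwise, and then $-\Delta u\ge c u^r v^p \ge c' x_n^{r+p}$ contradicts Lemma~\ref{lem_estimation_integrale_Liouville}; hence, say, $L(u)=0$, so $[(u-Kv)_+](R)\le[u](R)\to 0$, and Lemma~\ref{lemme_principal} (the Phragm\'en--Lindel\"of variant) gives $u\le Kv$. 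Your part~(iii) is essentially the same as the paper's, but since it relies on your~(ii), the whole proposal needs the two structural ingredients you are missing: superharmonicity of $u,v$, and Lemma~\ref{lem_lambda}(ii) converting averaged decay/growth into pointwise bounds.
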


Theorem~\ref{thm_systeme_S_rsd_generalise} complements~\cite[Theorem~1.2]{QS},
which concerned similar problems in  $\mathbb{R}^n$.

\begin{rmq}
\label{rmqOptim}
 The restrictions
(\ref{condition_rnsd})--(\ref{condition_rnsd2}) are unlikely to be optimal, since they are strongly related to nonexistence results for inequalities in the half-space. Recall that $-\Delta v = v^p$ has no positive solutions vanishing on the boundary for each $p>1$, while the same is valid for  $-\Delta v \ge v^p$ if and only if $p\leq (n+1)/(n-1)$.
\end{rmq}

The proof of Theorem~\ref{thm_systeme_S_rsd_generalise} makes use of a generalization of Theorem \ref{thm_general}, which we state next. If $w$ is a continuous function in $\overline{\mathbb{R}^n_+}$, we denote with $[w]$ its half-spherical mean, defined by
$$
[w](R)=\frac{1}{ |S_R^+|}\int_{S_R^+} \frac{w(x)}{R}\;\frac{x_n}{R}\,d\sigma_R(x),
$$
for each $R>0$, where $S_R^+=\{x\in \mathbb{R}^n_+,\, |x|=R \}$.

\begin{thm}
\label{thm_general2}
Assume that (\ref{condition_fg}) holds.
Let $(u,v)$ be a classical solution of~(\ref{mainsyst}) in $\mathbb{R}^n_+$,
such that $u=Kv$ on $\partial \mathbb{R}^n_+$.
If
\begin{equation}
\label{condition_thm_general2}
\underset{R \rightarrow \infty }{\lim\inf} [(u-Kv)_+](R)=0 \text{\qquad and \qquad }
\underset{R \rightarrow \infty }{\lim\inf} [(Kv-u)_+](R)=0,
\end{equation}
then $u\equiv Kv.$
\end{thm}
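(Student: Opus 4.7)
The plan is to mimic the strategy of Theorem~\ref{thm_general}, replacing the pointwise Phragm\'en--Lindel\"of maximum principle by a monotonicity statement for the weighted half-spherical mean $[\cdot]$. First, set $w := u - Kv$; the structural hypothesis (\ref{condition_fg}) forces $f - Kg \le 0$ wherever $w > 0$ and $f - Kg \ge 0$ wherever $w < 0$. Hence $-\Delta w_+ \le 0$ and $-\Delta w_- \le 0$ on the respective sets where these truncations are smooth, and by Kato's inequality (or mollification) the functions $w_+ = (u-Kv)_+$ and $w_- = (Kv-u)_+$ are nonnegative and subharmonic in $\mathbb{R}^n_+$ in the distributional sense, continuous up to $\partial \mathbb{R}^n_+$ and vanishing there by the boundary condition $u=Kv$. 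This reduces the theorem to the following sub-claim: \emph{any such $\phi$ with $\liminf_{R\to\infty}[\phi](R) = 0$ vanishes identically.}

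The heart of the proof is to show that $R \mapsto [\phi](R)$ is non-decreasing on $(0,\infty)$. The natural device is Green's second identity on $B_R^+ := B_R\cap \mathbb{R}^n_+$ applied to $\phi$ against the harmonic function $x_n$. The choice of $x_n$ as test function is crucial: it is harmonic, positive in $\mathbb{R}^n_+$, and vanishes on the flat boundary, so together with $\phi|_{\partial\mathbb{R}^n_+} = 0$ this cancels all boundary terms on the flat part and leaves
\[
\int_{B_R^+} x_n\,\Delta\phi\,dx \;=\; \int_{S_R^+} \frac{x_n}{R}\bigl(x\cdot\nabla\phi - \phi\bigr)\,d\sigma_R.
\]
Setting $h(R) := \int_{S_R^+}(x_n/R)\,\phi\,d\sigma_R$, I would differentiate in $R$ after the rescaling $y = x/R$ and then substitute the identity above, which should produce
\[
h'(R) \;=\; \frac{n}{R}\, h(R) \;+\; \frac{1}{R}\int_{B_R^+} x_n\,\Delta\phi\,dx \;\ge\; \frac{n}{R}\,h(R),
\]
i.e.\ $R^{-n}h(R)$ is non-decreasing; since $[\phi](R)$ differs from $R^{-n}h(R)$ only by a dimensional constant, the same monotonicity holds for $[\phi]$.

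The conclusion then follows quickly: $\liminf_{R\to\infty}[\phi](R) = 0$ together with monotonicity forces $[\phi](R) \equiv 0$ on $(0,\infty)$, and since $\phi\ge 0$ and the weight $x_n/R$ is strictly positive on $S_R^+ \setminus \partial\mathbb{R}^n_+$, this yields $\phi \equiv 0$ on every half-sphere, hence on all of $\mathbb{R}^n_+$. Applying the sub-claim to both $w_+$ and $w_-$ gives $u\equiv Kv$. The main obstacle I anticipate is identifying the correct monotone quantity: the specific weight $(x_n/R)\cdot(1/R)$ in the definition of $[\cdot]$ is dictated by the fact that it is the unique averaging that makes Green's identity close up without residual boundary contributions, and any coarser mean would fail to be monotone; this also explains why the hypothesis (\ref{condition_thm_general2}) is stated with exactly this weight. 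A secondary technical point is the merely Lipschitz regularity of $w_\pm$, which is handled either by viewing $\Delta w_\pm$ as a nonnegative Radon measure or by applying Green's identity to mollifications and passing to the limit.
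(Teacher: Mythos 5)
Your proposal is correct and follows essentially the same route as the paper: show that the half-spherical mean $[\phi]$ is nondecreasing by computing its $R$-derivative via Green's identity against the harmonic weight $x_n$ (which is exactly the content of Lemma~\ref{lem_derivee_moyenne}), then combine monotonicity with the $\liminf$ hypothesis to force $[\phi]\equiv 0$ and hence $\phi\equiv 0$. The only point of divergence is technical: you propose to work with the Lipschitz truncations $w_\pm$ directly via Kato's inequality and mollification, whereas the paper sidesteps the regularity issue entirely by applying a $C^2$ convex nondecreasing function $\psi$ with $0\le\psi(t)\le t_+$ to the $C^2$ function $w=u-Kv$, so that $\psi(w)$ remains classical and Lemma~\ref{lem_derivee_moyenne} applies without any distributional detour.
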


\begin{rmq}\label{rmqphrlind}  If $u,v$ have sublinear growth then $\underset{R \rightarrow \infty }{\lim} [|u|](R)=\underset{R \rightarrow \infty }{\lim} [|v|](R)=0$, which in turn implies (\ref{condition_thm_general2}).  Hence Theorem \ref{thm_general} is a consequence of Theorem \ref{thm_general2}.
\end{rmq}

\subsection{A priori estimates and existence in bounded domains}\label{mainres23}

We consider the Dirichlet problem
\begin{equation}
\label{system_pqr_Dir}
\left\{\quad{\alignedat2
-\Delta u&=u^rv^p\bigl[a(x)v^q-c(x)u^q\bigr]+\mu(x) u, &\qquad&x\in \Omega,\\
 -\Delta v&=v^ru^p\bigl[b(x)u^q-d(x)v^q\bigr]+\nu(x) v, &\qquad&x\in \Omega,\\
u&=v=0, &\qquad&x\in \partial\Omega,\\
 \endalignedat}\right.
\end{equation}
where $\Omega$ is a smooth bounded domain of $\mathbb{R}^n$.
For simplicity,  here we restrict ourselves to linear lower order terms.
Further results,  for systems with more general lower order terms, will be given in Section~6.
Note that, due to the space dependence of the coefficients $a, b, c, d$ and to the presence of the lower order terms,
the right-hand side of system (\ref{system_pqr_Dir}) does not satisfy~(\ref{condition_fg}), in general.
Therefore, system (\ref{system_pqr_Dir}) cannot be directly reduced to a scalar problem via the property $u\equiv Kv$.

\begin{thm}
\label{AprioriBound}
Let $p, r\ge 0$, $q>0$, and
\begin{equation}
\label{hypApriori1_0}
q\ge |p-r|,\qquad q+r\ge 1, \qquad r\le 1,\qquad 1<p+q+r<\frac{n+2\phantom{_+}}{(n-2)_+}.
\end{equation}
Let  $a, b, c, d,\mu,\nu\in C(\overline\Omega)$ satisfy $a, b>0$, $c, d\ge 0$ in
$\overline\Omega$ and
\begin{equation}
\label{hypApriori2_0}
\inf_{x\in \Omega}\,\left[a(x)b(x)-c(x)d(x)\right]\,>0.
\end{equation}

(i) Then there exists $M>0$, depending only on $p,q,r$, $\Omega$, and the uniform norms of $a, b, c, d,\mu,\nu$, such that any positive classical solution $(u,v)$ of
(\ref{system_pqr_Dir}) satisfies
$$ \sup_{\Omega} u\leq M ,\quad \sup_\Omega v\leq M.$$

(ii) Assume in addition that $a, b, c, d,\mu,\nu$ are H\"older continuous
and that $\mu,\nu<\lambda_1(-\Delta, \Omega)\;\mbox{ in }\;\overline{\Omega}$.
Then there exists at least one positive classical solution of (\ref{system_pqr_Dir}).
\end{thm}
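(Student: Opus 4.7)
My plan for part (i) is the Gidas--Spruck blow-up method, with the Liouville-type theorems of Sections~\ref{mainres21}--\ref{mainres22} as the final ingredient. Assume for contradiction that there exists a sequence $(u_k,v_k)$ of positive classical solutions of \eqref{system_pqr_Dir} with $M_k:=\max(\|u_k\|_\infty,\|v_k\|_\infty)\to\infty$. The system being invariant under the joint swap $u\leftrightarrow v$, $a\leftrightarrow b$, $c\leftrightarrow d$, $\mu\leftrightarrow\nu$, I may assume without loss of generality that $u_k(x_k)=M_k$ at some $x_k\in\Omega$. Setting $\sigma:=p+q+r>1$ and $\lambda_k:=M_k^{-(\sigma-1)/2}\to 0$, I introduce
\[
\tilde u_k(y):=M_k^{-1}u_k(x_k+\lambda_k y),\qquad \tilde v_k(y):=M_k^{-1}v_k(x_k+\lambda_k y),
\]
which takes values in $[0,1]$, satisfies $\tilde u_k(0)=1$, and solves a rescaled version of the system with slowly varying coefficients $a_k(y)=a(x_k+\lambda_k y)$, etc., with the linear perturbations $\lambda_k^2\mu_k\tilde u_k$, $\lambda_k^2\nu_k\tilde v_k$ vanishing on compacta. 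By interior and boundary Schauder estimates and a diagonal extraction, a subsequence converges in $C^2_{\mathrm{loc}}$ to a bounded nonnegative classical solution $(U,V)$ of the constant-coefficient version of \eqref{system_Spqr} (with coefficients $a_\ast,b_\ast,c_\ast,d_\ast$ satisfying $a_\ast b_\ast>c_\ast d_\ast$ by \eqref{hypApriori2_0}), posed on either $\mathbb{R}^n$ or a half-space of $\mathbb{R}^n$ with Dirichlet boundary condition, according as $\mathrm{dist}(x_k,\partial\Omega)/\lambda_k$ diverges or stays bounded; the ``collapsing'' case $\mathrm{dist}(x_k,\partial\Omega)/\lambda_k\to 0$ is ruled out by $C^2$-up-to-boundary regularity, since it would force $U(0)=0$ against $\tilde u_k(0)=1$.

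In the half-space case, Proposition~\ref{Prop-uKv} produces the unique $K>0$ for which \eqref{condition_fg} holds for the limiting system, and $f(x,Ks,s)=K^r(a_\ast-c_\ast K^q)\,s^\sigma$ with $K^r(a_\ast-c_\ast K^q)>0$; as $\sigma>1$, Corollary~\ref{thm_half2} forces $(U,V)\equiv(0,0)$, contradicting $U(0)=1$. In the whole-space case, the hypotheses $\sigma<(n+2)/(n-2)_+$ and $q+r\ge 1$ make Corollary~\ref{rmqNonneg} applicable and reduce $(U,V)$ to a semi-trivial pair; combined with $U(0)=1$, necessarily $(U,V)=(C_1,0)$ with $C_1=1$. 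The corollary rules this out immediately when $r=0$, or when $p=0$, $r>0$ and $c_\ast>0$. The remaining possibilities---namely $p>0$, or $p=0$ with $c_\ast=0$---are exactly the genuinely noncooperative situations flagged in Remark~\ref{remruleout}, and constitute the main obstacle.

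To dispose of this last case I argue directly on the physical ball $B_k:=B_{R\lambda_k}(x_k)$ for a fixed, later large, $R>0$. Local uniform convergence $(\tilde u_k,\tilde v_k)\to(1,0)$ yields $u_k\ge M_k/2$ and $v_k=o(M_k)$ on $B_k$ for $k$ large, so the second equation of \eqref{system_pqr_Dir} gives
\[
-\Delta v_k \;\ge\; c_0\,M_k^{p+q}\,v_k\qquad\text{in }B_k,
\]
for some $c_0>0$ depending only on the limit coefficients. Since $v_k>0$ in $\Omega\supset B_k$, the Picone inequality applied with test function $\varphi_k^2/v_k$, where $\varphi_k$ is the principal Dirichlet eigenfunction of $-\Delta$ on $B_k$ with eigenvalue $\mu_1/(R\lambda_k)^2$, produces
\[
c_0\,M_k^{p+q}\;\le\;\mu_1/(R\lambda_k)^2 \;=\;(\mu_1/R^2)\,M_k^{\sigma-1},\qquad\text{i.e.,}\qquad c_0\,R^2\;\le\;\mu_1\,M_k^{r-1}.
\]
The hypothesis $r\le 1$ bounds the right-hand side by $\mu_1$ uniformly in $k$, while $R>0$ is arbitrary: this is the desired contradiction, which closes part~(i).

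For part (ii) I use a standard cone topological degree argument. Writing the Dirichlet problem as $(u,v)=\mathcal{T}(u,v)$ in $C_0(\overline\Omega)^2$, where $\mathcal{T}=(-\Delta+\Lambda\,\mathrm{Id})^{-1}\circ(F+\Lambda\,\mathrm{Id})$ for $\Lambda>0$ large enough that the modified right-hand side preserves the positive cone $\mathcal{K}$ on a sufficiently large ball, the operator $\mathcal{T}$ is compact. Because $\sigma>1$ the nonlinear part of $F$ vanishes to order $\ge 2$ at the origin, so the linearization of $\mathcal T$ at $(0,0)$ reduces to the diagonal pair $((-\Delta+\Lambda)^{-1}(\mu+\Lambda),(-\Delta+\Lambda)^{-1}(\nu+\Lambda))$, whose spectral radius on $\mathcal{K}$ is strictly less than $1$ by the hypothesis $\mu,\nu<\lambda_1(-\Delta,\Omega)$; the cone-index of $\mathcal T$ at the origin therefore equals~$1$. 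Combining the uniform a~priori bound from (i) with the classical device of introducing a push-off term $t\omega$ with $\omega\succ 0$ into the right-hand side---and testing against the principal Dirichlet eigenfunction of $-\Delta$ on $\Omega$ to rule out positive solutions for $t$ large---one concludes that the cone-index of $\mathcal T$ on a sufficiently large ball vanishes. The nonzero index difference produces a nontrivial fixed point $(u,v)\in \mathcal{K}\setminus\{0\}$, which is strictly positive by the strong maximum principle applied to each equation in turn.
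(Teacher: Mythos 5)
Your strategy for part~(i) — Gidas--Spruck blow-up, Liouville theorems in $\mathbb{R}^n$ and the half-space, and an eigenvalue comparison to rule out semi-trivial limits in the interior case — matches the paper's proof (of the more general Theorem~\ref{AprioriBound2}) in all essentials, and the key idea (exploiting $r\le 1$ via the Dirichlet eigenvalue on a ball of physical radius $\sim\lambda_k$) is the right one. Two small inaccuracies: the stated estimate $-\Delta v_k\ge c_0 M_k^{p+q}v_k$ with $c_0$ independent of $k$ is not correct for $r<1$, because the second equation gives $-\Delta v_k\ge cM_k^{p+q}v_k^r$ and $v_k$ can exceed $1$ on $B_k$ (it is only $o(M_k)$); using $v_k\le\eps M_k$ gives $-\Delta v_k\ge c\eps^{r-1}M_k^{\sigma-1}v_k$, which still yields $cR^2\le\mu_1$ and the desired contradiction for $R$ large, so your algebra lands in the right place by accident. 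Also, the collapsing case ${\rm dist}(x_k,\partial\Omega)/\lambda_k\to 0$ is not disposed of merely by ``$C^2$-up-to-boundary regularity''; one needs a boundary gradient bound (as in Gidas--Spruck) to force $U(0)=0$.

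Part~(ii) contains a genuine gap. You invoke the ``classical device of introducing a push-off term $t\omega$ and testing against the principal Dirichlet eigenfunction to rule out positive solutions for $t$ large.'' That device requires the relevant nonlinear term to be sign-controlled so that the eigenfunction pairing produces a superlinear lower bound, but here the right-hand side is genuinely noncooperative and not sign-definite: e.g.\ with $p=r=q=1$, $F+G=uv\bigl[(a-d)v+(b-c)u\bigr]+\mu u+\nu v$ can be negative on a large set even when $ab>cd$ (take $a=1$, $d=2$, $c=0.4$, $b=1$). So the pairing $\lambda_1\int(u+v)\varphi_1=\int(F+G+2t\omega)\varphi_1$ gives no upper bound on $t$, and the index-zero conclusion does not follow from the naive push-off. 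The paper avoids this by constructing a tailored homotopy (adding $tA$ to $a,b$ and $At(1+u)$, $At(1+v)$ to the lower-order terms) and by introducing the auxiliary function $\mathcal{S}=u^{1/2}v^{1/2}$, which enjoys the inequality $-\Delta\mathcal{S}\ge(A-C_1)\mathcal{S}+A$ in a subdomain — the repulsive parts cancel in the $\mathcal{S}$-equation through a discrete-convexity inequality in $X=u/v$. Some device of this sort, not the generic push-off, is required to make the index-zero half of the degree argument rigorous here. (Your index-one computation at the origin is fine; ``vanishes to order $\ge 2$'' should read ``is $o(|(u,v)|)$'', which is all $\sigma>1$ gives, but that suffices.)
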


As we already observed, Theorem \ref{AprioriBound} seems to be new even for very particular cases of \eqref{system_pqr_Dir}, for instance the system (BE) from Section \ref{intro13}.

The rest of the paper is organized as follows.
In the preliminary Section 3 we state some essentially known nonexistence results for scalar inequalities with weights. In Section 4 we prove the main classification and Liouville type results for the
repulsive-attractive system (\ref{system_Spqr})
in the whole space.
In Section 5 we introduce the half-spherical means, establish their monotonicity properties and
prove Theorems \ref{thm_general} and \ref{thm_general2}. Then we prove
 some further properties of half-spherical means of
superharmonic functions,   and deduce Theorem \ref{thm_systeme_S_rsd_generalise}.
Finally, Section 6 is devoted to a priori estimates by the rescaling method
and existence by topological degree arguments. In the appendix we gather some elementary computations related to Proposition \ref{Prop-uKv}.

\mysection{Preliminary results. Liouville theorems for\\ weighted inequalities in unbounded domains. }

 In this section we state three essentially known  nonexistence results for scalar elliptic inequalities.
We require such properties  both for  inequalities with source and for inequalities with  absorption.

In the rest of the paper, a {\it weak} solution of an (in)equality in a given domain $\Omega\subset \rn $ will mean a function in $H^1_{{ loc}}(\Omega)\cap C(\overline{\Omega})$ which verifies the given (in)equality in the sense of distributions.

We begin with the following Liouville type result for  weighted
elliptic inequalities with space dependence in an exterior domain of the half-space.

\smallskip

\begin{lem}
\label{lem_estimation_integrale_Liouville}
Let $r\ge 0$ and $u$ be a nonnegative weak solution of :
\begin{equation}
\label{ellipt_ineq}
-\Delta u \geq h(x)\,u^r \text{\qquad on }\; \mathbb{R}^n_+\setminus B_1,
\end{equation}
where   $h\geq0$  on $\mathbb{R}^n_+\setminus B_1$ and there exists  $\kappa>-2$ such that $\kappa+r\geq -1$, and
$$
h(x)\ge c |x|^{\kappa}\qquad\mbox{in the cone }\; \{ x\::\: x_n\geq\delta |x|\}\setminus B_1,
$$
for some constants $c,\delta>0$.

If $$0\leq r\leq \frac{n+1+{\kappa}}{n-1},$$ then $u=0$.
\end{lem}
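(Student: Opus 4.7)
The plan is to prove this Liouville-type result by the standard test-function strategy of Mitidieri--Pohozaev, but weighted by the first Dirichlet eigenfunction $\phi(x)=x_n$ of the hemisphere, so as to exploit the Martin structure of the half-space at infinity. The key point is that the critical exponent $(n+1+\kappa)/(n-1)$ is precisely the Hardy-Sobolev exponent for the cone $\mathbb{R}^n_+$ with weight $|x|^{\kappa}$ and characteristic exponent $\alpha_+=1$ (coming from the positive harmonic function $x\mapsto x_n$, which vanishes on $\partial\mathbb{R}^n_+$). The proof will essentially be an adaptation of the general scheme of \cite{armsir} to the present setting, where the lower bound on $h$ only holds on the strict subcone $\{x_n\geq \delta |x|\}$.

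Concretely, I would fix a smooth radial cutoff $\zeta_R(|x|)$ supported in the annulus $\{R<|x|<2R\}$, set $\psi=\phi\,\zeta_R^m$ for a large exponent $m$, and test $-\Delta u\geq h u^r$ against $\psi$ on $\mathbb{R}^n_+ \setminus B_1$. Since $\phi$ is harmonic in $\mathbb{R}^n_+$ and vanishes on $\partial \mathbb{R}^n_+$, an explicit computation gives
$$
\Delta(\phi\zeta_R^m)=\phi\,\Bigl[(\zeta_R^m)''+\tfrac{n+1}{|x|}(\zeta_R^m)'\Bigr],
$$
i.e. the effective radial operator is that of dimension $n+2$, which reflects exactly the shift $\alpha_+=1$. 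Integration by parts (noting that the boundary term on $\partial\mathbb{R}^n_+$ vanishes because $\phi=0$ there and the term $u\,\partial_\nu\phi=-u$ has a favorable sign since $u\geq 0$, while the boundary term on $\partial B_1$ is a harmless constant) yields
$$
\int \phi\,\zeta_R^m\, h\, u^r\,dx \;\leq\; C\int_{\{R\leq|x|\leq 2R\}\cap \mathbb{R}^n_+} \phi\, u\, \zeta_R^{m-2}\, R^{-2}\,dx.
$$

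In the superlinear regime $r>1$, I would apply Young's inequality with conjugate exponents $(r,r')$, absorb $\int \phi\zeta_R^m h u^r$ into the LHS, and estimate the remainder using $h\geq c|x|^\kappa$ on the subcone $\{x_n\geq\delta|x|\}$ (where $\phi\geq \delta R$) together with $h\geq 0$ elsewhere. A direct scaling then shows that the resulting quantity is of order $R^{N(r,\kappa,n)}$ with
$N(r,\kappa,n)= n+1-\bigl(r'(2+\kappa)-\kappa\bigr)$, which is $\leq 0$ exactly when $r\leq (n+1+\kappa)/(n-1)$. Letting $R\to\infty$ forces $\int \phi\, h\, u^r=0$, so $u\equiv 0$ on the subcone $\{x_n\geq \delta|x|\}$; the minimum principle for the superharmonic function $u$ in $\mathbb{R}^n_+\setminus B_1$ then propagates this to $u\equiv 0$ in the whole exterior.

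The main obstacle is the sublinear and linear range $r\in[0,1]$, where Young's inequality cannot be used to reabsorb the $u^r$ term. Here the strategy would be to work instead with the weighted half-spherical mean $m(R)=[u](R)$ defined in Section~5: using the eigenfunction identity $\Delta_{S^{n-1}}\omega_n=-(n-1)\omega_n$ on the hemisphere, the superharmonicity of $u$ together with the favorable sign of the equatorial boundary term forces $R\mapsto R^n(R^{1-n}N'(R))'$ to be $\leq 0$ with a quantitative source term $\geq c R^{n+\kappa}m(R)^r$ (via Jensen in the direction that works at $r\leq 1$ once the scaling condition $\kappa+r\geq -1$ guarantees local integrability). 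This converts the problem into an ODE Keller--Osserman inequality of the form $(R^\alpha N')'\leq -c R^\beta N^r$ whose only nonnegative solution, under $r\leq (n+1+\kappa)/(n-1)$, is identically zero; from $m\equiv 0$ and $x_n>0$ in $\mathbb{R}^n_+$ one concludes $u\equiv 0$ as before.
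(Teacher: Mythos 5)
The paper's own ``proof'' is a one-line citation to Armstrong--Sirakov \cite{armsir}, whose argument is a maximum-principle bootstrap: one first uses the minimum principle to bound $u$ from below by a multiple of the Martin kernel $x_n|x|^{-n}$, then plugs this into $-\Delta u\geq h u^r$ to improve the lower bound, and iterates until a contradiction with the global upper bound on the weighted mean of a superharmonic function. Your approach is a Mitidieri--Pohozaev test-function argument in the superlinear range, patched by an ODE/half-spherical-mean argument at $r\leq 1$; this is genuinely different in spirit, but I do not think it goes through as written.

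The most serious gap is in the superlinear step. After testing against $\psi=x_n\zeta_R^m$ you reach, correctly,
$$
\int h\,u^r\,\psi\;\leq\;\int_{\{R\leq|x|\leq 2R\}}u\,|\Delta\psi|\;\leq\;C R^{-2}\int_{\{R\leq|x|\leq 2R\}\cap\mathbb{R}^n_+}u\,x_n\,\zeta_R^{m-2}\,dx,
$$
and you then want to reabsorb by Young's inequality. But the Young split has to be of the form $u\,x_n\zeta_R^{m-2}=(u\,h^{1/r}\psi^{1/r})\cdot(h^{-1/r}\psi^{-1/r}x_n\zeta_R^{m-2})$, which requires a \emph{pointwise} positive lower bound on $h$ on the support of the test function. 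The hypothesis only gives $h\geq c|x|^\kappa$ inside the narrow cone $\{x_n\geq\delta|x|\}$; on the rest of the annulus one has only $h\geq 0$, and possibly $h\equiv 0$ there. You cannot absorb the contribution of $\{x_n<\delta|x|\}$ into the left-hand side, nor do you have any a priori control on $u$ there (Harnack does not help for superharmonic functions in that direction). Shrinking the test function to the subcone does not rescue things either: the first Dirichlet eigenfunction of the narrow cone has characteristic exponent $\alpha_+>1$, and the scaling then yields a strictly smaller critical exponent than $(n+1+\kappa)/(n-1)$. This is exactly the point where the maximum-principle approach of \cite{armsir} pays off: a superharmonic lower bound on $u$ is available on the \emph{whole} exterior domain, so the iteration is not sensitive to where $h$ degenerates.

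The second difficulty is in the sublinear range. You invoke ``Jensen in the direction that works at $r\leq 1$'' to produce a source term $\gtrsim R^{n+\kappa}\,[u](R)^r$ from the quantity $\int_{B_R^+}h\,u^r\,x_n$, but Jensen for the concave power $t\mapsto t^r$ with $r<1$ gives the \emph{opposite} inequality, $\oint u^r\leq(\oint u)^r$, so it cannot lower-bound $\int u^r$ by a power of the mean. What actually works here (and recovers the whole range $r\leq (n+1+\kappa)/(n-1)$, including $r=0$ with $\kappa$ close to $-2$) is again the quantitative bootstrap: start from $u\geq c\,x_n|x|^{-n}$ given by the minimum principle, feed it through the inequality and Lemma 5.3's monotonicity formula to improve the exponent, and iterate finitely many times until the accumulated decay of $[u](R)$ contradicts $[u]\geq 0$. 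As written, your ODE step is not a proof: it is not clear what $N$ is, nor how the stated second-order inequality $(R^{1-n}N')'\leq -cR^{\beta-n}N^r$ follows from Lemma~\ref{lem_derivee_moyenne}, and the Jensen step it relies on is in the wrong direction.

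The final propagation step is fine: if $u$ vanishes on the open subcone and is superharmonic and nonnegative on the connected set $\mathbb{R}^n_+\setminus B_1$, the strong minimum principle gives $u\equiv 0$ there. But the argument leading to $u\equiv 0$ on the subcone has the gap above, so the proof as a whole is not correct.
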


\begin{proof}This follows from Theorem 5.1 or Corollary 5.6  in \cite{armsir}. Note that theorem was stated for $h(x) = c |x|^\kappa$ but its proof contains the statement of Lemma \ref{lem_estimation_integrale_Liouville}. As is explained in Section 3 of \cite{armsir}, the results in that paper hold for any notion of weak solution, for which the maximum principle and some related properties are valid.
\end{proof}

\begin{rmq} We will apply Lemma \ref{lem_estimation_integrale_Liouville}
 with  $h$ in the form $h(x) = cx_n^s|x|^{-m}$.
\end{rmq}

The next result plays a crucial role in our proofs below.

\begin{lem}
\label{eigenvalue_V}
Assume $0\le r\le n/(n-2)_+$ and let $V\in C(\mathbb{R}^n)$, $V\ge 0$, $V\not\equiv0$ be such that
\begin{equation}
\label{hypVBR}
\liminf_{R\to\infty} R^{-n}\int_{B_R\setminus B_{R/2}} V(x)\, dx>0.
\end{equation}
Let  $z\ge 0$ be a weak solution of
\begin{equation}
\label{eqVBR}
-\Delta z\ge V(x)z^r\qquad \mbox{in }\;\mathbb{R}^n.
\end{equation}
Then $z\equiv 0$.
\end{lem}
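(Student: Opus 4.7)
My approach is to argue by contradiction, combining the strong minimum principle with a spherical-mean estimate derived from iterated integration of the differential inequality.

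Assume $z\not\equiv 0$. Since $-\Delta z\ge Vz^r\ge 0$, the strong minimum principle for nonnegative superharmonic functions forces $z>0$ throughout $\mathbb{R}^n$. In the low-dimensional case $n\le 2$, every nonnegative superharmonic function on $\mathbb{R}^n$ is constant (classical Liouville, since the fundamental solution of $-\Delta$ does not decay at infinity), so $z\equiv c>0$; substituting into the inequality yields $Vc^r\le 0$ pointwise, contradicting $V\not\equiv 0$.

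For $n\ge 3$, set $m(R)=|\partial B_R|^{-1}\int_{\partial B_R}z\,d\sigma$ and $G(R)=\int_{B_R}Vz^r\,dx$. By superharmonicity, $m$ is non-increasing, hence bounded above by $m(1)<\infty$, while $G$ is non-decreasing. Integrating $-\Delta z\ge Vz^r$ over $B_R$ and applying the divergence theorem gives $-|\partial B_1|\,R^{n-1}m'(R)\ge G(R)$; a second integration from $R$ to $\infty$, using $m\ge 0$ and the monotonicity of $G$, produces the key estimate
\begin{equation*}
m(R)\ge c_n R^{2-n}G(R),\qquad R\ge 1.
\end{equation*}
A first application of this inequality, combined with the density hypothesis $\int_{B_R\setminus B_{R/2}}V\ge c R^n$ for $R$ large and the fact that $z>0$ on a fixed compact annulus on which $\int V$ is bounded below, already yields the pointwise bound $z(x)\ge c|x|^{2-n}$ for $|x|$ large; this is extracted via Green's representation on $B_{R'}(x_0)$, letting $R'\to\infty$ (the boundary-Poisson term is harmless since $m$ is bounded). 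Bootstrapping this pointwise bound through the Green representation and the density hypothesis on $V$ produces successive lower bounds $z(x)\ge c|x|^{\alpha_k}$ with $\alpha_{k+1}=2+r\alpha_k$, $\alpha_0=2-n$. The subcritical hypothesis $r<n/(n-2)$ is exactly what guarantees that the fixed point $2/(1-r)$ lies below $\alpha_0$ and that the iteration escapes to $+\infty$; once $\alpha_k>0$, the key estimate forces $m(R)\to\infty$, contradicting $m(R)\le m(1)$.

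The main obstacle is the critical case $r=n/(n-2)$, where the linear recursion is stationary at $\alpha_k\equiv 2-n$ and the bootstrap fails to improve the exponent. For this borderline value I would switch to a Mitidieri--Pohozaev-type test-function argument with cutoff $\varphi_R=\phi(|x|/R)^\lambda$ for large $\lambda$, combined with H\"older's inequality employing $V$ as a weight, using the density hypothesis on $V$ to absorb the reminder term and close the estimate in exactly the critical regime; alternatively, a logarithmic refinement of the Green bound above yields the same conclusion.
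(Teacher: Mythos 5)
Your argument for the strictly subcritical range $r<n/(n-2)$ is a valid alternative to the paper's and somewhat cleaner in spirit: instead of the paper's single-scale estimate $m(R)\ge cR^2m(R)^r$ obtained from a quantitative strong maximum principle on a rescaled ball, you iterate the Green-representation lower bound $z(x_0)\ge c\int |x_0-y|^{2-n}V(y)z^r(y)\,dy$ across the annulus $B_{2R}\setminus B_{R/2}$ to escalate the decay exponent. The recursion $\alpha_{k+1}=2+r\alpha_k$, $\alpha_0=2-n$, is correct, the role of the threshold $r<n/(n-2)$ is exactly as you describe, and the degradation of the multiplicative constants is harmless here because one needs only a single index $k$ with $\alpha_k>0$ to contradict the boundedness of the spherical mean. (Two minor imprecisions: for $r<1$ the fixed point $2/(1-r)$ is positive, hence \emph{above} $\alpha_0$, and it is the contracting character of the map that drives $\alpha_k$ past zero; and your working with spherical averages rather than infima is fine — both versions of the mean obey the same first-order identity.)

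The real issue is the critical case $r=n/(n-2)$, which you explicitly acknowledge is not covered by the bootstrap, and neither of your proposed fallbacks actually works.
First, the Mitidieri--Pohozaev test-function method: applying H\"older with the weight $V$ to absorb $\int z|\Delta\varphi_R|$ into $\int Vz^r\varphi_R$ produces a factor like $\bigl(\int V^{-1/(r-1)}|\Delta\varphi_R|^{r/(r-1)}\varphi_R^{-1/(r-1)}\bigr)^{(r-1)/r}$, which requires an integrable upper bound on $V^{-1}$ and hence a pointwise positive lower bound on $V$. But (\ref{hypVBR}) only controls annulus averages of $V$; $V$ may vanish on arbitrarily large sets, and the test-function estimate has no way to exploit an average lower bound. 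This approach does not apply here.
Second, the ``logarithmic refinement of the Green bound'': if you carry this out precisely (sum the Green representation over dyadic annuli between $R_0$ and $|x_0|/2$, using $z\ge c_0|y|^{2-n}$ and $(2-n)r=-n$ so that each annulus contributes a fixed amount), you obtain $\tilde m(R):=\inf_{|x|=R}z(x)|x|^{n-2}\gtrsim c_0^r\log(R/R_0)$, and after normalizing at scale $2R$ this becomes a \emph{multiplicative} recursion of the type $\tilde m(2R)\gtrsim \tilde m(R)^r$. Such a recursion forces $\tilde m(R)\to\infty$ only if the initial value exceeds an explicit threshold depending on the unknown structural constants, and there is no way to guarantee that. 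The paper's critical-case argument instead applies the quantitative strong maximum principle to the \emph{difference} $\tilde u_R-\tilde m(R)\Phi$ (which is itself nonnegative superharmonic and satisfies the same inequality), and the positivity of $V$ on average over the annulus $B_5\setminus B_1$ (after rescaling) produces an \emph{additive} increment $\tilde m(2R)\ge \tilde m(R)+c_02^{n-2}$, which blows up unconditionally. This additive versus multiplicative distinction is precisely what is missing from your sketch, so the proof as proposed has a genuine gap at $r=n/(n-2)$.
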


The point is that in inequality (\ref{eqVBR}) the potential $V(x)$ is not assumed to be bounded below by a positive constant (in which case the result is well-known - see for instance \cite{MiPo}), but only in {\it average on large annuli}.

In particular, Lemma \ref{eigenvalue_V} applies if $V\gneqq 0$ is a {\it subharmonic} function. Indeed, the mean-value inequality and the well-known fact that for each subharmonic function $V$ the spherical average $\bar V(R) =  \oint_{\partial B_R} V $ is nondecreasing in $R$ easily imply that (here $\oint$ stands for the  average integral)
$$
\oint_{B_R} V(x)\, dx \le \frac{n}{R} \int_0^R \bar V(r) \,dr \leq \frac{2n}{R} \int_{R/2}^R  \bar V(r) \,dr  \le C(n) \oint_{B_R\setminus B_{R/2}} V(x)\, dx,
$$
hence, for each $x_0\in \rn$
$$
C(n)\liminf_{R\to\infty} \oint_{B_R\setminus B_{R/2}} V(x)\, dx \geq \liminf_{R\to\infty}  \oint_{B_R} V(x)\, dx =  \liminf_{R\to\infty}  \oint_{B_R(x_0)} V(x)\, dx \geq V(x_0),
$$
which implies, for each subharmonic $V\gneqq0$ and some positive constant $c(n)$,
$$
\liminf_{R\to\infty} R^{-n}\int_{B_R\setminus B_{R/2}} V(x)\, dx \ge c(n)\, \sup_{\rn} V.
$$
\smallskip

 Lemma \ref{eigenvalue_V} can be proved through a slight modification of the argument introduced in \cite{armsir}. We will give a full and simplified proof, for completeness.

We first recall the following "quantitative strong maximum principle".
\begin{lem}\label{lem_qsmp} Let $\Omega$ be a smooth bounded domain and $K$ be a compact subset of $\Omega$. There exists a constant $c>0$ depending only on $n$, $K$, dist$(K,\partial \Omega)$, such that if $h$ is a nonnegative bounded function and $ u$ satisfies the inequality
$$
-\Delta u \geq h \quad \mbox{in }\; \Omega,
\qquad
\mbox{then}\qquad
\inf_{K} u \geq c \int_{K} h(x)\,dx.
$$
\end{lem}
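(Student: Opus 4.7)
My plan is to use a Green's function representation on a suitably chosen subdomain of $\Omega$. A preliminary remark: some nonnegativity hypothesis on $u$ (say $u\geq 0$ in $\Omega$) is indispensable, since otherwise $u\equiv -1$ with $h\equiv 0$ violates the conclusion; this hypothesis is automatically met in the intended application to Lemma~\ref{eigenvalue_V}, where the solution $z$ is nonnegative, and I will assume it throughout.

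Setting $\delta:=\text{dist}(K,\partial\Omega)>0$, I will first fix a smooth bounded domain $\Omega'$ with $K\subset \Omega'$ and $\text{dist}(\overline{\Omega'},\partial\Omega)\geq \delta/2$; such an $\Omega'$ can be obtained as a mollification of the $\delta/3$-neighborhood of $K$, and can be arranged to depend only on $n$, $K$, $\delta$. Letting $G=G_{\Omega'}$ denote the Dirichlet Green's function of $-\Delta$ on $\Omega'$, I will then define
\begin{equation*}
v(x):=\int_K G(x,y)\,h(y)\,dy,\qquad x\in\Omega',
\end{equation*}
so that $-\Delta v = h\,\mathbf{1}_K$ in $\Omega'$ and $v=0$ on $\partial\Omega'$.

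Next I will compare $u$ with $v$ on $\Omega'$. The function $w:=u-v$ satisfies $-\Delta w\geq h(1-\mathbf{1}_K)\geq 0$ in $\Omega'$ and $w=u\geq 0$ on $\partial\Omega'\subset\Omega$, so the weak maximum principle gives $u\geq v$ on $\Omega'$. To bound $v$ from below on $K$, I will invoke the classical properties of the Dirichlet Green's function: $G(x,y)>0$ off the diagonal, $G(x,y)\to +\infty$ as $y\to x$, and $G$ is continuous elsewhere. Extending $G(x,x):=+\infty$ thus makes $G$ lower semicontinuous on $\Omega'\times\Omega'$, and compactness of $K\times K\subset \Omega'\times \Omega'$ yields $c:=\inf_{K\times K} G>0$, a constant depending only on $n$, $K$, $\delta$. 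Combining, for $x\in K$,
\begin{equation*}
u(x)\geq v(x)\geq c\int_K h(y)\,dy,
\end{equation*}
which gives the desired estimate $\inf_K u\geq c\int_K h$.

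The only mildly delicate point is the construction of $\Omega'$ with the required regularity while keeping the dependence of the final constant only on $n$, $K$, and $\delta$; this is a routine smoothing. All the other ingredients (the weak maximum principle, and the positivity/lower semicontinuity of the Green's function on compact subsets of $\Omega'$) are standard, so I do not anticipate any serious obstacle in carrying out the argument.
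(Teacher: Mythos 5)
Your argument is correct and implements exactly the alternative justification the paper offers after the lemma statement, namely that the result is ``a consequence of the fact that the Green function of the Laplacian in any domain is strictly positive away from the boundary''; the paper otherwise simply cites Lemma~3.2 of Brezis--Cabr\'e \cite{brecab}. Your observation that a nonnegativity hypothesis $u\geq 0$ is indispensable is also right: the lemma as printed omits it, the counterexample $u\equiv -1$, $h\equiv 0$ shows it cannot be dropped, it is present in the Brezis--Cabr\'e statement, and both invocations in the proof of Lemma~\ref{eigenvalue_V} apply the lemma to functions that are nonnegative (the solution $u_R$ itself, and $\tilde u_R - \tilde m(R)\Phi$). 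The only place requiring a little care, which you flag, is the construction of the smooth intermediate domain $\Omega'$ depending only on $n$, $K$, and $\delta=\mathrm{dist}(K,\partial\Omega)$, but that is indeed routine.
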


For a simple proof of  Lemma \ref{lem_qsmp} one may consult Lemma 3.2 in \cite{brecab}. Lemma \ref{lem_qsmp} can also be seen as a consequence of the fact that the Green function of the Laplacian in any domain is strictly positive away from the boundary of the domain.

\smallskip

\begin{proof}[\underline{Proof of Lemma \ref{eigenvalue_V}}] If $n\leq 2$ Lemma \ref{eigenvalue_V} is immediate, since every positive superharmonic function in $\mathbb{R}^2$ is constant.

Suppose now $n\geq 3$ and $u$ is a solution of \eqref{eqVBR}. Set $u_R(x) := u(Rx)$ and  $m(R) :=\inf_{\partial B_R}u = \inf_{\partial B_1} u_R$. By the maximum principle $m(R)=\inf_{ B_R}u = \inf_{B_1} u_R$ and $m(R)$ is nonincreasing in $R$.

Observe that \eqref{hypVBR} is equivalent to the existence of $R_0>0$ and $c_0>0$ such that $$\int_{B_1\setminus B_{1/2}} V(Rx)\,dx \geq c_0>0\quad\mbox{ for }R\ge R_0.$$ From now on we assume that $R\ge R_0$. Since $u_R$ is a solution in $\rn$ of the inequality
$$
-\Delta u_R \geq R^2 V(Rx) u_R^p,$$
 we can apply Lemma \ref{lem_qsmp} with $\Omega = B_2$ and $K=\bar B_1$  and deduce  $$
m(R) \geq cR^2 m(R)^p,
$$
for some $c>0$. If $p\leq 1$ this is a contradiction, since $m(R)$ is nonincreasing in $R$.
If $p>1$ we get
\begin{equation}\label{eqi1}
m(R) \leq C R^{-\frac{2}{p-1}}.
\end{equation}

Since $u$ is superharmonic, the maximum principle implies that
$$
u(x) \geq m(1) |x|^{2-n}\quad \mbox{ in } \; \rn\setminus B_1 ,
$$
and hence
\begin{equation}\label{eqi2}
m(R) \geq c R^{2-n}\quad \mbox{ for  } \; R\geq 1.
\end{equation}

If $p<n/(n-2)$, combining \eqref{eqi1} and \eqref{eqi2}, and letting $R\to \infty$ yields a contradiction.

Finally, assume that $p=n/(n-2)$, that is, $2/(p-1) = n-2$.
Set $\tilde u_R (x) = R^{n-2} u(Rx)$. Then
\begin{equation}\label{eqi3}
-\Delta \tilde u_R \geq V(Rx) \tilde u_R^p.
\end{equation}
Observe that
$$
\tilde m(R) := \inf_{\partial B_1} \tilde u_R = \inf_{\partial B_R} \frac{u}{\Phi}, $$
where $\Phi(x) = |x|^{2-n}$. We proved in \eqref{eqi1}--\eqref{eqi2} that $0<c\leq \tilde m(R) \leq C$, for $R\ge R_0$.

By the maximum principle $ u(x)\geq \tilde m(R) \Phi(x)$ in $\rn\setminus B_R$, which is equivalent to  $\tilde u_R\geq \tilde m(R) \Phi$ in $\rn\setminus B_1$, by the $(2-n)$-homogeneity of $\Phi$. In addition, $\tilde m$ is nondecreasing in $R$.

So \eqref{eqi3} implies
\begin{equation}\label{eqi4}
-\Delta (\tilde u_R - \tilde m(R) \Phi) \geq V(Rx) \tilde u_R^p \geq cV(Rx)\quad\mbox { in }\; B_5\setminus B_{1}.
\end{equation}

We apply Lemma \ref{lem_qsmp} to this inequality, with $\Omega = B_5\setminus B_{1}$ and $K = B_4\setminus B_{3/2}$, to deduce that
$$
\tilde u_R \geq \tilde m(R) \Phi + c_0 = (\tilde m(R) + c_0 2^{n-2}) \Phi \qquad \mbox{on }\; \partial B_2,
$$
that is,
$$
u \geq (\tilde m(R) + c_0 2^{n-2}) \Phi \qquad \mbox{on }\; \partial B_{2R}.
$$
Hence
$$
\tilde m(2R) \geq \tilde m(R) + c_0 2^{n-2},
$$
which implies $\tilde m(R)\to \infty$ as $R\to \infty$, a contradiction.
\end{proof}
\smallskip

The following lemma is a generalization of a classical result of Keller and Osserman to weak solutions of coercive problems with weights.

\begin{lem}
\label{eigenvalue_V2}
Let $W$  be a nonnegative weak solution  of
\begin{equation}
\label{equ_lin}
\Delta W \geq \frac{A}{1+|x|^2}\, W^p \qquad \mbox{in }\; \mathbb{R}^n,
\end{equation}
where $p\geq 0$ and $A>0$.
\begin{itemize}
\item[(i)] If $W\in L^\infty(\rn)$, then $W=0$.
\item[(ii)] If $p>1$, then $W=0$.
\end{itemize}
\end{lem}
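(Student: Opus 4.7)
For \emph{part (i)}, I will work with the spherical average $\phi(R) := \oint_{\partial B_R} W\, d\sigma$, which is nondecreasing in $R$ by subharmonicity of $W$. If $W\not\equiv 0$, pick $x_1$ with $W(x_1)>0$; the sub-mean-value property (comparing $W(x_1)\le \oint_{B_R(x_1)}W$ with the radial representation of $\int_{B_{R+|x_1|}}W$) implies $\lim_{R\to\infty}\phi(R) \ge W(x_1) > 0$, so there exist $R_0>0$ and $\phi_0>0$ with $\phi(R) \ge \phi_0$ for all $R\ge R_0$. Integrating the distributional inequality on $B_R$ and applying the divergence theorem yields
$$
(R^{n-1}\phi'(R))' \;\ge\; \frac{A R^{n-1}}{1+R^2}\,\overline{W^p}(R), \qquad \overline{W^p}(R):=\oint_{\partial B_R} W^p\, d\sigma.
$$
To bound $\overline{W^p}$ from below by a positive constant I split cases. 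For $p\ge 1$, Jensen's inequality gives $\overline{W^p}\ge \phi^p \ge \phi_0^p$; for $p\in [0,1]$, with $M:=\|W\|_\infty$ and $p-1\le 0$, the pointwise bound $W^p\ge M^{p-1}W$ yields $\overline{W^p}\ge M^{p-1}\phi_0$. In both cases
$$
(R^{n-1}\phi'(R))' \;\ge\; c\,\frac{R^{n-1}}{1+R^2}, \qquad R\ge R_0,
$$
for some $c>0$, and two successive integrations force $\phi(R)\to +\infty$ (growing at least like $\ln R$ if $n\ge 3$, $(\ln R)^2$ if $n=2$, and $R$ if $n=1$), contradicting $\phi\le M$.

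For \emph{part (ii)}, the strategy is to reduce to (i) by showing that $W$ is automatically uniformly bounded in $\rn$, via a localized Keller--Osserman barrier. Fix $x_0\in\rn$, set $R_0:=1+|x_0|$ and $\rho := R_0/2$. For $y\in B(x_0,\rho)$ one has $1+|y|^2\le C_0 R_0^2$, so
$$
\Delta W \;\ge\; \varepsilon\, W^p \quad\text{in }B(x_0,\rho), \qquad \varepsilon := \frac{A}{C_0 R_0^2}.
$$
I compare $W$ with the classical radial barrier
$$
U(y) := D\bigl(\rho^2-|y-x_0|^2\bigr)^{-\alpha}, \qquad \alpha := \frac{2}{p-1}.
$$
A direct computation gives $\Delta U \le C_1(n,p)\,\rho^2 D(\rho^2-|y-x_0|^2)^{-\alpha-2}$, so the identity $\alpha p = \alpha+2$ shows that $U$ is a supersolution $\Delta U\le \varepsilon U^p$ as soon as $D^{p-1}\ge C_1\rho^2/\varepsilon \asymp R_0^4$. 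Taking the minimal such $D \asymp R_0^{4/(p-1)}$ yields
$$
U(x_0) \;=\; D\,\rho^{-2\alpha} \;\asymp\; R_0^{4/(p-1)}\cdot R_0^{-4/(p-1)} \;=\; \text{const},
$$
\emph{independent of $x_0$.} Since $U=+\infty$ on $\partial B(x_0,\rho)$, the comparison principle---valid because $s\mapsto \varepsilon s^p$ is nondecreasing on $[0,\infty)$---gives $W(x_0)\le U(x_0)\le M_\ast$ with $M_\ast$ independent of $x_0$. Hence $W\in L^\infty(\rn)$, and part (i) yields $W\equiv 0$.

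The main obstacle lies in the comparison step in (ii): one must compare the weak $H^1_{\rm loc}\cap C$ subsolution $W$ with the classical supersolution $U$ near $\partial B(x_0,\rho)$ where $U$ blows up. This is handled by testing the distributional inequality for $W-U$ against $(W-U)_+\eta^2$ for a smooth cutoff $\eta$ supported strictly inside $B(x_0,\rho)$; since $U\to+\infty$ at the boundary, $(W-U)_+$ vanishes in a neighborhood of $\partial B(x_0,\rho)$, so letting $\eta\uparrow 1$ is harmless and gives $(W-U)_+\equiv 0$.
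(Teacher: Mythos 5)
Your proof is correct and rests on the same two pillars as the paper's: driving the spherical average to infinity for part (i), and the Keller--Osserman barrier with exponent $\alpha=2/(p-1)$, tuned to the quadratic decay of the weight, for part (ii). Two implementation details differ. First, the paper begins with a mollification step ($Z := W*\rho$ again satisfies \eqref{equ_lin} with a slightly smaller $A$), after which the whole argument is run for a smooth subsolution; this disposes in one stroke of the weak-solution technicalities that you handle separately in (i) (justifying the ODE inequality for $\phi$ when $W$ is merely $H^1_{\mathrm{loc}}\cap C$, which requires approximating $\mathbf{1}_{B_R}$ by test functions) and in (ii) (the $H^1$ energy comparison against $(W-U)_+\eta^2$). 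Both routes work; mollifying once is cleaner. Second, in (ii) the paper compares $W$ against a single family $W_R(x)=C\,R^{2\alpha}(R^2-|x|^2)^{-\alpha}$ centered at the origin and lets $R\to\infty$, choosing the constant $C$ independent of $R\ge1$ thanks to the cancellation $2\alpha+2-2\alpha p=-2$ against the $(1+R^2)$ factor of the weight. Your construction --- a barrier of radius $\rho\asymp 1+|x_0|$ around each $x_0$, with $D\,\rho^{-2\alpha}\asymp\mathrm{const}$ --- encodes exactly the same cancellation read at the point $x_0$ rather than at the origin, and is equally valid.
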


The statement (ii) in this lemma appeared first in \cite{Lin2}
 {(see also \cite{NiWM} for an earlier result for potentials with subquadratic decay).}
 We will provide a full and simplified proof  in the case of the Laplacian, for the reader's convenience.

\begin{proof}[\underline{Proof of Lemma \ref{eigenvalue_V2}}] In what follows we denote $$V(x) = \frac{A}{1+|x|^2}.$$

{\it Step 1}.  We assume $p>1$ or $W\in L^\infty(\rn)$. Given a solution $W$ of $(\ref{equ_lin})$, we prove the existence of a
smooth $Z$
satisfying the same equation, with possibly modified constant $A$.

We pick a nonnegative $\rho\in C^\infty(\rn)$ with support inside
$B(0,1)$ such that $\int_\rn \rho =1$ and set $Z=W*\rho\in C^\infty(\rn)$. It is easy to
see that $$\Delta Z \geq [V\,W^p]*\rho$$ in the classical sense.
 Note that if $|y|\leq 1$, then $V(x-y)\geq
\frac{1}{2}\frac{A}{1+|x|^2} $.

So $$[V\,W^p]*\rho(x)=\int_\rn V(x-y)W^p(x-y)\rho(y)\,dy
\geq \frac{C}{2}\frac{A}{1+|x|^2}  Z^p,$$ where
$C=1$ if $p\geq 1$
 and  $C=\frac{1}{\|W\|_\infty^{1-p}}$  if $0\leq p<1$
(if $p>1$ we use Jensen's inequality).

Hence, $$\Delta Z \geq \tilde{V} Z^p$$ where
$\tilde{V}=\frac{\tilde{A}}{1+|x|^2}$ and $\tilde{A}=\frac{CA}{2}$.
Note that if $W\in L^\infty(\rn)$, then $Z\in L^\infty(\rn)$.
\smallskip

{\it Step 2}. From Step 1, we can assume that $W$ is smooth.

(i) Suppose for contradiction that $W\geq 0$ is bounded on $\mathbb{R}^n$ and does not vanish
identically. We can   assume without loss of generality that
$W(0)>0$, since the problem is invariant with respect to  translations (a translation of $V$ gives a function whose behaviour is the same as $V$).

For any $R>0$, we denote the spherical mean of $W$ by
$$\overline{W}(R)=\frac{1}{|S_R|}\int_{S_R} W \;d\sigma_R$$
 where $S_R$ is the sphere of center 0 and radius R, $\sigma_R$ is the
Lebesgue's measure on $S_R$ and $|S_R|=\sigma_R(S_R)$. It is clear that
$\overline{W}$ is bounded on $(0,+\infty)$.

Since $W$ is subharmonic,
$$ W(0)\leq \frac{1}{|B_R|}\int_{B_R} W \; dx . $$
It is easy to see that
there exists $C>0$ independent of $R$ such that {
$$ (W(0))^{\max(p,1)}\leq C\;\frac{1}{|B_R|}\int_{B_R} W^p \; dx . $$
Indeed, if $p\ge1$ then  this is a consequence of Jensen's inequality (and $C=1$), whereas if
$0\leq p\leq 1$, we can use the boundedness of $W$
(and $C=\|W\|_\infty^{1-p}$).}

 We know that
 $$ \frac{d\overline{W}}{dR}=\frac{1}{|S_R|}\int_{B_R} \Delta W \;d\sigma_R,$$
 hence
 $$\frac{d\overline{W}}{dR}\geq \frac{A}{n}\frac{R}{1+R^2}\frac{1}{|B_R|}\int_{B_R} W^p
 \; dx \geq \frac{A}{nC}
\frac{R}{1+R^2} (W(0))^{\max(p,1)} = C\frac{R}{1+R^2}.$$
 But this implies  $\overline{W}(R)\underset{R\rightarrow +\infty}{\longrightarrow}
+\infty$, which is a contradiction.  \smallskip

 (ii) We assume $p>1$ and will prove that $W$ is bounded, which  implies the result,
 by the statement (i).

Arguing as in  \cite{Osserman}, we define the function $W_R$ on $B_R$ by
 $$W_R(x)=C \frac{R^{2\alpha}}{(R^2-|x|^2)^\alpha}, $$
 where $\alpha=\frac{2}{p-1}$. We will see, by direct computation, that if $C>0$ is
large enough, then
 \begin{equation}
\label{equation_W_R}
 \Delta W_R \leq \frac{A}{1+|x|^2}{W_R}^p.
 \end{equation}
 Indeed, denoting $r=|x|$, we have
 $$\Delta W_R=2\alpha C R^{2\alpha}\;\frac{n(R^2-r^2)+2(\alpha+1)r^2}
{(R^2-r^2)^{\alpha+2}}\leq 2\alpha C R^{2\alpha+2}\;\frac{n+2(\alpha+1)}
{(R^2-r^2)^{\alpha+2}}$$
 and
$$\frac{A}{1+r^2}{W_R}^p=\frac{A}{1+r^2} \frac{C^pR^{2\alpha p}}{(R^2-r^2)^{\alpha p}}
\geq  \frac{A}{1+R^2} \frac{C^pR^{2\alpha p}}{(R^2-r^2)^{\alpha p}}.$$
We note that $\alpha+2=\alpha p$.
Hence, a sufficient condition to have (\ref{equation_W_R}) is
$$ C^{p-1}\geq (1+R^2)  R^{2\alpha+2-2\alpha p}\frac{2\alpha[n+2(\alpha+1)]}{A}.$$
Since $2\alpha+2-2\alpha p = -2$,  for each $R\ge 1$ a sufficient condition for the last inequality is
$$ C^{p-1}\geq \frac{4\alpha[n+2(\alpha+1)]}{A},$$
and this is how we choose $C$.

It is now easy to see that $W\leq W_R$ on $B_R$. Note that $W_R(x)\to\infty$ as $x\to\partial B_R$. If we denote
$w=W-W_R$ and if S is a $C^2$ nondecreasing convex function on
$\mathbb{R}$ such that $S=0$ on $(-\infty,0]$ and $S>0$ otherwise, then
$$\Delta
S(w)\geq S'(w) \Delta w\geq S'(w)V(x)( W^p-{W_R}^p)
\geq 0.$$ Hence $S(w)$ is subharmonic on $B_R$ and can
be continuously extended on $\overline{B_R}$ by setting $S(w)=0$ on $S_R$, so
 by the maximum principle $S(w)=0$, that is, $w\leq 0$.

Finally, for all $R\geq 1$, $W\leq W_R$ on $B_R$,  so by letting $R\to \infty$
 we obtain $W(x)\leq \lim_{R\to\infty} W_R(x)= C$, for each $x\in \rn$.
 \end{proof}

\mysection{Proofs of the classification and Liouville theorems in the whole space }

\subsection{Proof of Theorem \ref{thm_Spqr}.}

The key idea is to use the two auxiliary functions
$$W:=|u-Kv|$$
and
$$Z:=\min(u,Kv),$$
where $K$ is given by Proposition~\ref{Prop-uKv}. Clearly $u\equiv Kv$ is equivalent to $W\equiv0$, and $u=v\equiv 0$ is equivalent to $Z\equiv 0$, when $K>0$.

The following two lemmas assert that the functions $ Z,W$ satisfy a suitable
system of elliptic inequalities, respectively of the form (\ref{eqVBR}) and (\ref{equ_lin}).

\begin{lem}
\label{minuv}
We suppose that \eqref{Hyp_abcdpqr} holds.

\smallskip
(i) Assume $ ab \geq cd$. Then $Z$ is superharmonic.

If $p+q< 1$, suppose in addition that $(u,v)$ is bounded.
Then $Z$ is a weak solution of
\begin{equation}
\label{DeltaZineq}
-\Delta Z \ge CW^\beta Z^r\qquad\mbox{in }\; \rn,
\end{equation}
where $\beta:=\max(p+q,1)$ and $C>0$.

\smallskip
(ii) Assume $ab>cd$. Then $Z$ is a weak solution of
$$-\Delta Z \ge CZ^{p+q+r}\qquad\mbox{in }\; \rn.$$
\end{lem}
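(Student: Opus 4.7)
\textbf{Proof plan for Lemma~\ref{minuv}.}

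The strategy is to use a Kato-type inequality to split $-\Delta Z$ into contributions from the two sets $\{u<Kv\}$ and $\{u>Kv\}$, then to estimate each contribution via the algebraic information provided by Proposition~\ref{Prop-uKv}.

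Writing $Z = Kv - (Kv-u)^+$ and applying the distributional Kato inequality $\Delta\phi^+\ge\mathbf{1}_{\{\phi>0\}}\Delta\phi$ with $\phi = Kv-u$, the two equations of \eqref{system_Spqr} yield
$$
-\Delta Z \;\ge\; (-\Delta u)\,\mathbf{1}_{\{u<Kv\}} + (-\Delta(Kv))\,\mathbf{1}_{\{u\ge Kv\}}\qquad\text{in }\mathcal{D}'(\rn).
$$
It thus suffices to lower-bound each right-hand side of \eqref{system_Spqr} on its respective region (where $Z=u$, $W=Kv-u$, and $Z=Kv$, $W=u-Kv$ respectively).

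For part (ii), I would use that $u\le Kv$ implies $u^q\le K^q v^q$, so $av^q-cu^q\ge(a-cK^q)v^q$ with $a-cK^q>0$ by Proposition~\ref{Prop-uKv}(ii); combined with $v\ge u/K$ this gives $-\Delta u\ge C u^{p+q+r}$ on $\{u\le Kv\}$. A symmetric computation on $\{u\ge Kv\}$ based on $bK^q-d>0$ handles $-\Delta(Kv)$. The superharmonicity claim in (i) follows from the same decompositions, now using only the weak signs $a-cK^q\ge 0$ and $bK^q-d\ge 0$.

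For the quantitative bound $-\Delta Z\ge CW^\beta Z^r$ in (i), on $\{u<Kv\}$ I would factor $av^q-cu^q \ge a(v^q - (u/K)^q)$ and invoke the elementary estimate
$$
v^q - (u/K)^q \;\ge\; \begin{cases} K^{-q}(Kv-u)^q, & q\ge 1,\\ qK^{-1}v^{q-1}(Kv-u), & 0<q<1,\end{cases}
$$
(the first from $(x+y)^q\ge x^q+y^q$ for $q\ge 1$, the second from the mean value theorem applied to $s\mapsto s^q$ with $q-1<0$). For $q\ge 1$ the trivial bound $v^p\ge K^{-p}W^p$ (since $W=Kv-u\le Kv$) then gives $-\Delta u \ge C Z^r W^{p+q}$ directly. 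For $0<q<1$ one is left with a residual factor $v^{p+q-1}$: if $p+q\ge 1$ this factor has nonnegative exponent and is absorbed using $W\le Kv$, yielding $\beta=p+q$; if $p+q<1$, the exponent is negative so $v^{p+q-1}$ blows up as $v\to\infty$, and this is precisely where the assumption that $(u,v)$ is bounded intervenes, producing $v^{p+q-1}\ge c>0$ and hence $\beta=1$. The analysis on $\{u>Kv\}$ is entirely symmetric, swapping the roles of $u,v$ and of $(a,c)$ with $(d,b)$. The main obstacle is this boundedness issue for $0<q<1$; it is also what dictates the definition $\beta=\max(p+q,1)$.
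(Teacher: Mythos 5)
Your proposal is correct and follows essentially the same route as the paper: a Kato inequality to reduce $-\Delta Z$ to contributions from $\{u<Kv\}$ and $\{u\ge Kv\}$, elementary convexity/mean-value bounds on $x^q-y^q$, the observation $W\le Kv$ to absorb the weight $v^{p+q-1}$ when $p+q\ge1$, boundedness of $(u,v)$ when $p+q<1$, and for part (ii) the strict signs $a-cK^q>0$, $bK^q-d>0$ combined with $v\ge u/K$ (resp.\ $u\ge Kv$). The only cosmetic difference is your one-sided decomposition $Z=Kv-(Kv-u)^+$, which folds the set $\{u=Kv\}$ into the $v$-side and thereby avoids the separate verification of nonnegativity on $\{u=Kv\}$ that the paper carries out for its symmetric decomposition $Z=\tfrac12\bigl(u+Kv-(u-Kv)_+-(Kv-u)_+\bigr)$.
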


\begin{lem}
\label{DeltaW}
We suppose that \eqref{Hyp_abcdpqr} holds, and $ ab \geq cd$.

\smallskip
(i) Then $W$ is subharmonic.

\smallskip
(ii) Assume $r>p$ and $c, d>0$.
We also suppose that $(u,v)$ is bounded in case $q+r< 1$.
Then $W$ is a weak solution of
\begin{equation}
\label{DeltaWineq}
\Delta W \ge CZ^p W^\gamma\qquad\mbox{in }\; \rn,
\end{equation}
where $\gamma:=\max(q+r,1)$ and $C>0$.
\end{lem}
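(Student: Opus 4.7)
My plan is to prove both parts of Lemma \ref{DeltaW} by exploiting the algebraic structure of the homogeneous polynomial $h(u, v) := Kg(u, v) - f(u, v)$. By Proposition \ref{Prop-uKv}(i) and condition (\ref{condition_fg}), $h$ has the sign of $u - Kv$; by continuity and homogeneity of degree $p + q + r$, it therefore factors as $h = (u - Kv)\,M(u, v)$ with $M$ nonnegative on $\{u \geq Kv \geq 0\}$.

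For part (i), on the open set $\{u > Kv\}$ one has $-\Delta(u - Kv) = f - Kg \leq 0$, so $(u - Kv)_+$ is weakly subharmonic on $\rn$ by a standard argument (e.g.\ Kato's inequality applied to $(u-Kv)$). The symmetric statement for $(Kv - u)_+$ and the identity $W = (u - Kv)_+ + (Kv - u)_+$ then yield subharmonicity of $W$.

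For part (ii), I reduce to a one-dimensional problem by setting $t := u/v$ and writing $h(u, v) = v^{p+q+r}\Psi(t)$, where $\Psi(t) := Kbt^{p+q} - Kdt^p - at^r + ct^{r+q}$. Testing (\ref{condition_fg}) on $u = Kv$ gives $\Psi(K) = 0$, equivalent to the identity $bK^{p+q+1} + cK^{r+q} = aK^r + dK^{p+1}$, so $\Psi$ factors as $\Psi(t) = (t - K)Q(t)$. The pivotal step is to prove $\Psi'(K) > 0$: using the identity above to eliminate $bK^{p+q+1}$, I expect the computation to give $K\Psi'(K) = aK^r(p + q - r) + qdK^{p+1} + cK^{r+q}(r - p)$, where each term is nonnegative thanks to $q \geq r - p$ (from (\ref{Hyp_abcdpqr})) and $c, d > 0$, and the last is strictly positive because $r > p$. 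Combining $Q(K) > 0$ with the asymptotics $\Psi(t) \sim ct^{r+q}$ as $t \to \infty$ and $\Psi(t) \sim -Kdt^p$ as $t \to 0^+$, a compactness argument on $(0, \infty) \setminus \{K\}$ yields the two uniform lower bounds $Q(t) \geq c_0(1 + (t - K)^{r+q-1})$ on $[K, \infty)$ and $Q(t) \geq c_0 t^p(1 + (K - t)^{r+q-1})$ on $[0, K]$.

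To conclude, on $\{u > Kv\}$ I have $\Delta W = h = v^{p+q+r-1}(u - Kv)Q(u/v)$; substituting the lower bound for $Q$ on $[K, \infty)$ and writing $u - Kv = W$, $v = Z/K$ gives $\Delta W \geq C_1 Z^p W^{q+r}$ when $q + r \geq 1$. A symmetric computation on $\{u < Kv\}$, using the matching bound for $Q$ on $[0, K]$ (whose $t^p$ factor is precisely what reproduces $Z^p$ after translating, since $Z = u$ there), gives the same estimate; this is where the hypotheses $c, d > 0$ are essential, as they ensure $Q$ does not vanish to excessive order at the endpoints of $[0, K]$. The global weak inequality $\Delta W \geq CZ^p W^{q+r}$ on $\rn$ then follows from the two regions together with the subharmonicity from part (i), which trivially covers the set $\{u = Kv\}$. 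Finally, when $q + r < 1$, boundedness of $(u, v)$ gives $W \leq W_\infty$, hence $W^{q + r} \geq W_\infty^{q + r - 1}\, W$, yielding $\Delta W \geq C_2 Z^p W$ with $\gamma = 1$. I expect the main obstacle to be the fine algebraic analysis of $Q$, in particular the sharp lower bound on $[0, K]$ producing the correct exponent of $Z$.
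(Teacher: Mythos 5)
Your proof of part~(i) is the paper's: Kato's inequality applied to $(u-Kv)_+$ and $(Kv-u)_+$, combined with the sign condition~\eqref{condition_fg}. For part~(ii) with $q+r\ge 1$, your route is parallel to the paper's but packaged differently. The paper proves the quantitative lower bound $(Kg-f)(u-Kv)\ge C\,u^pv^p(u+Kv)^{q+r-p-1}(u-Kv)^2$ (Lemma~\ref{elemlem}(i)) via the substitution $s=(u/v)^{r-p}$, which renders the relevant one-variable polynomial convex, before isolating the factor $Z^p$. You instead work directly with $\Psi(t)=Kbt^{p+q}-Kdt^p-at^r+ct^{r+q}$, $t=u/v$, factor $\Psi=(t-K)Q$, compute $K\Psi'(K)=aK^r(p+q-r)+qdK^{p+1}+cK^{r+q}(r-p)>0$ (this identity is correct and uses $q\ge r-p$, $r>p$, $c,d>0$), and then read off the growth of $Q$ from its behaviour at $0$, $K$ and $\infty$, relying on Proposition~\ref{Prop-uKv} for uniqueness of the zero. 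Your observation that the $t^p$ prefactor in the bound on $[0,K]$ is exactly what reproduces $Z^p=u^p$ on $\{u<Kv\}$ is also right. This is the same mechanism as the paper's (nondegeneracy at $K$ plus one-variable asymptotics), carried out without the convexity substitution.

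However, the case $q+r<1$ is not actually handled. Your claimed bound $Q(t)\ge c_0\bigl(1+(t-K)^{q+r-1}\bigr)$ on $[K,\infty)$ is false when $q+r<1$: the right-hand side diverges as $t\to K^+$, whereas $Q(K)=\Psi'(K)$ is finite. Consequently the intermediate pointwise inequality $\Delta W\ge CZ^pW^{q+r}$ is never established in that regime, and your final step (using $W\le W_\infty$ to pass from $W^{q+r}$ to $W$) has no input. The natural repair, and what the paper in effect does, is to first prove a $Q$-bound valid for all $q+r>0$, of the shape $Q(t)\ge c_0\,\min(t,1)^p(1+t)^{q+r-1}$, which gives the intermediate estimate $\Delta W\ge C\,Z^p\,(u+Kv)^{q+r-1}\,|u-Kv|$; one then concludes $\gamma=q+r$ from $u+Kv\ge|u-Kv|$ when $q+r\ge1$, and $\gamma=1$ from boundedness of $u+Kv$ (rather than of $W$) when $q+r<1$.
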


\begin{proof}[\underline{Proof of Lemma~\ref{minuv}}]
Let us recall the Kato inequality (valid in particular for  weak solutions):
\begin{equation}
\label{KatoIneq}
\Delta z_+\ge \chi_{\{z>0\}} \Delta z.
\end{equation}

(i) Writing
$$Z={1\over 2}\bigl(u+Kv-(u-Kv)_+-(Kv-u)_+\bigr),$$
it follows from (\ref{KatoIneq}) that
$$-\Delta Z \ge {1\over 2}\bigl(-\Delta(u+Kv)+\chi_{\{u>Kv\}} \Delta(u-Kv)+\chi_{\{u<Kv\}} \Delta(Kv-u)\bigr),$$
hence
\begin{equation}
\label{KatoIneq2}
-\Delta Z \ge -\chi_{\{u<Kv\}} \Delta u - K\chi_{\{u>Kv\}} \Delta v-  {1\over 2}\chi_{\{u=Kv\}} \Delta (u+Kv).
\end{equation}

Now we make use of the inequality
$$x^q-y^q\ge C_q x^{q-1}(x-y),\quad x>y>0$$
with $C_q=1$ if $q\ge 1$, $C_q=q$ if $0<q<1$.
By Proposition~\ref{Prop-uKv}, we have
\begin{equation}
\label{abcdK}
a-cK^q\ge 0,\quad bK^q-d\ge 0.
\end{equation}
Therefore, on the set $\{u\le Kv\}$, we obtain
\begin{equation}
\label{InequleKv}
\alignedat2
-\Delta u&=u^rv^p(av^q-cu^q)
 \ge aK^{-q}u^rv^p((Kv)^q-u^q) \\
 &\ge aC_qK^{-1}u^rv^{p+q-1}(Kv-u) \ \ge 0.
 \endalignedat
\end{equation}
  Similarly, on the set $\{u\ge Kv\}$, we get
\begin{equation}
\label{InequgeKv}
\alignedat2
-\Delta v&=v^ru^p(bu^q-dv^q)
 \ge bv^ru^p(u^q-(Kv)^q) \\
&\ge bC_qv^ru^{p+q-1}(u-Kv) \  \ge 0.
 \endalignedat
\end{equation}
{In particular}, $-\chi_{\{u=Kv\}} \Delta (u+Kv)\geq 0$. 
  Hence, we deduce from (\ref{KatoIneq2}) that
  \begin{equation}
\label{KatoIneq2b} 
-\Delta Z \ge -\chi_{\{u<Kv\}} \Delta u - K\chi_{\{u>Kv\}} \Delta v,
\end{equation}
  so $Z$ is superharmonic, by (\ref{InequleKv}) and (\ref{InequgeKv}).

   Now assume either that $p+q\ge 1$ or that $(u,v)$ is bounded.
By using that
$$v^{p+q-1}\ge C(Kv-u)^{p+q-1}\quad\mbox{
  if }\;p+q\ge 1,$$
  and $v^{p+q-1}\ge C>0$ otherwise (since $v$ is bounded),    we infer from (\ref{InequleKv}) and (\ref{InequgeKv}) that
  $$-\Delta u \ge Cu^r(Kv-u)^\beta\quad\hbox{ on }\;\{u\le Kv\},$$
  and
  $$-\Delta v \ge  C(Kv)^r(u-Kv)^\beta \quad\hbox{ on }\;\{u\ge Kv\}.$$

We then deduce from {(\ref{KatoIneq2b})} that 
$$-\Delta Z \ge Cu^r(Kv-u)^\beta_+ + C(Kv)^r(u-Kv)^\beta_+=C|u-Kv|^\beta Z^r.$$

(ii) If $ab>cd$, then the inequalities in (\ref{abcdK}) are strict, that is $a\ge cK^q+\eps$, $bK^q\ge d+\eps$ for some $\eps>0$.  Then we obtain, as in (\ref{InequleKv}) and (\ref{InequgeKv}), that
$$-\Delta u \ge \eps u^rv^{p+q}\ge \eps K^{-p-q} Z^\sigma\qquad \mbox{on the set}\quad\{u\le Kv\},$$
and
$-\Delta v\ge \eps {u^pv^{q+r}}\ge \eps K^{{-q-r}} Z^\sigma$ on the set $\{u\ge Kv\}$,
for some $\eps>0$.
The assertion then follows from {(\ref{KatoIneq2b}).} 
\end{proof}
\smallskip

\begin{proof}[\underline{Proof of Lemma~\ref{DeltaW}}]
(i) By using (\ref{KatoIneq}) and Proposition~\ref{Prop-uKv}, we get
$$
\begin{aligned}
\Delta W &= \Delta(u-Kv)_+ + \Delta(Kv-u)_+ \\
&\ge \chi_{\{u>Kv\}} \Delta(u-Kv)+\chi_{\{u<Kv\}} \Delta(Kv-u) \\
 \end{aligned}
 $$
{hence  
\begin{equation}
\label{ineqDeltaW}
\Delta W\ge\chi_{\{u>Kv\}} (Kg-f) +\chi_{\{u<Kv\}} (f-Kg) \ge 0,
\end{equation}}
where we have set $f(u,v) = u^rv^p[av^q-cu^q]$, $g(u,v) = v^ru^p[bu^q-dv^q]$.
\smallskip

(ii) In Lemma \ref{elemlem}(i) in the appendix we show that
$$
(Kg-f)(u-Kv)\geq C u^p v^p(u+Kv)^{q+r-p-1} (u-Kv)^2,
$$
{when $r>p$ and $c, d>0$.}  
{Using (\ref{ineqDeltaW}), we then get}  
$$
\begin{aligned}
\Delta W
&\ge\chi_{\{u>Kv\}} (Kg-f) +\chi_{\{u<Kv\}} (f-Kg)\\
\noalign{\vskip 1mm}
&\ge Cu^pv^p(u+Kv)^{q+r-p-1}|u-Kv| \\
\noalign{\vskip 1mm}
&\ge C_1Z^p(u+Kv)^{q+r-1}|u-Kv|.
\end{aligned}
$$
If $q+r\ge 1$, we conclude by using $(u+Kv)^{q+r-1}\ge |u-Kv|^{q+r-1}$.
If $q+r< 1$, we conclude by using
$(u+Kv)^{q+r-1}\ge C$, in view of the boundedness of $(u,v)$.
\end{proof}

\begin{proof}[\underline{Proof of Theorem \ref{thm_Spqr}}]
 (i) Assume for contradiction that $u\not\equiv Kv$.
By Lemma~\ref{DeltaW}(i), the function $W=|u-Kv|$ is subharmonic, nonnegative and nontrivial. Clearly, so is $W^\beta$, for each $\beta\ge1$. Then Lemma \ref{eigenvalue_V}  applies to the inequality $-\Delta Z\ge W^\beta Z^r$, which we proved in Lemma \ref{minuv} (recall the discussion after the statement of Lemma \ref{eigenvalue_V}). Hence $Z\equiv 0$, a contradiction.

\smallskip
 (ii) First, we observe that we may assume  $q+r> 1$. Indeed, if  $q+r\le 1$, then $(u,v)$ is assumed to be bounded
and, since $r\le q+r\le 1<  n/(n-2)_+$, the conclusion follows from assertion~(i).

Next we claim that we may assume $r>p$.
Indeed, if $p+q<1$, then this is true due to $q+r>1>p+q$.  If $p+q\ge 1$, then we may assume $r>n/(n-2)$ and $n\ge 3$,
since otherwise the result is already known from assertion~(i). But then $r>2/(n-2)\ge p$.

Now, by Lemma~\ref{minuv}(i), $Z$ is superharmonic and positive, hence
$$Z(x)\ge c_1 (1+|x|)^{2-n},\quad x\in\mathbb{R}^n,$$ 
for some $c_1>0$. Therefore
$$Z^p(x)\ge \tilde c_1 (1+|x|)^{-(n-2)p}\ge \tilde c_1 (1+|x|)^{-2},\quad
x\in\mathbb{R}^n.$$
Hence we can  apply Lemma~\ref{eigenvalue_V2} to the inequality $\Delta W \geq Z^pW^\beta$, which we obtained in Lemma~\ref{DeltaW}(ii), and  conclude that $W\equiv 0$.
\end{proof}

\begin{rmq}
\label{rmqCounterIneq}
It does not seem possible to go beyond assumptions (\ref{HypthmSpqr}), (\ref{HypthmSpqr2}) by the sole means of the mixed-type system
(\ref{DeltaZineq})-(\ref{DeltaWineq}).
Indeed,  if $n\geq 3$, $r>\frac{2}{n-2}$ and $p>\frac{2}{n-2}$, then this system admits positive solutions of the form 
$$Z=C(1+|x|^2)^{-\alpha},\quad W=B-A(1+|x|^2)^{-\beta},$$
with suitable $B>A>0$, $C>0$,
 $2/(n-2)<1/\alpha<\min(p, r-1)$ and $0<\beta<p\alpha-1$
(this is easily checked by direct computation).
We remark that Keller-Osserman type estimates and Liouville theorems for another mixed-type system, namely
$$-\Delta Z = W^p, \quad \Delta W=Z^q,$$
were obtained in the recent work \cite{BVGY}.
\end{rmq}

\begin{proof}[\underline{Proof of Theorem \ref{thm_SpqrLiouv}}]\quad
Assume first that $\sigma\le n/(n-2)_+$.
Then the result is a consequence of Lemma~\ref{minuv}(ii) and Lemma
\ref{eigenvalue_V}.

Assume next that $n\ge 3$ and $\sigma>n/(n-2)$. 
Suppose for contradiction that a positive bounded solution $(u,v)$ exists.
By  (\ref{Hyp_abcdpqr}), we have $r\le p+q$, hence $r\le \sigma/2$.
Since $\sigma<(n+2)/(n-2)$, we deduce $r\le n/(n-2)$.
Theorem~\ref{thm_Spqr} then guarantees that $u=Kv$,
where $K$ is given by Proposition~\ref{Prop-uKv}.
It follows that
$$-\Delta v=K^{-1}u^rv^p(av^q-cu^q)=Cv^\sigma,\quad x\in\mathbb{R}^n, $$
with $C=K^{r-1}(a-cK^q)>0$ by Proposition~\ref{Prop-uKv}.
But this contradicts a well-known Liouville-type result from \cite{GS2}.

Moreover, if either $p+q\geq 1$, or $p\le 2/(n-2)$ (hence $q+r>1$ due to $\sigma>n/(n-2)$),
then the boundedness assumption is not necessary when applying Theorem~\ref{thm_Spqr}.
Finally, we note that if $n\le 4$, then we always have
either $p+q\ge 1$ or $p<1\le 2/(n-2)$.
\end{proof}

\begin{proof}[\underline{Proof of Proposition~\ref{propo_cdzero}}]
We may assume without loss of generality that $d=0$.
(Indeed the system \eqref{system_Spqr} with unknown $(u,v)$, parameters $a, b, c, d$ and exponents $p, q, r$ is equivalent to
the system \eqref{system_Spqr} with unknown $(v,u)$, parameters $b, a, d, c$ and same exponents.)
 Also we may assume that $c>0$ since, in the case $c=d=0$, the result is already known from Theorems~1.4(i) and 1.2 in \cite{QS}.
(This is actually proved there in the case $a=b=1$, but the general case immediately follows by scaling.)

Now assume that $(Kv-u)_+\not\equiv 0$.
Since
$$\Delta(Kv-u)_+ \ge \chi_{\{u<Kv\}} \Delta(Kv-u) \ge \chi_{\{u<Kv\}} (f-Kg) \ge 0,$$
due to Proposition~\ref{Prop-uKv}, the function $(Kv-u)_+$ is subharmonic.
It follows (see the discussion after the statement of Lemma \ref{eigenvalue_V})) that
$$\liminf_{R\to\infty} R^{-n}\int_{B_R} (Kv-u)_+(x)\, dx>0.$$
Consequently, since $v\ge (1/K)(Kv-u)_+$ we have $\liminf_{R\to\infty}\overline{v}(R)=:L>0$,
where $\overline{v}(R)=|S_R|^{-1}\int_{S_R}v\,d\sigma_R$ denote the spherical means.
But, since $v$ is superharmonic  due to $d=0$, 
we deduce from \cite[Lemma 3.2]{QS} that
\begin{equation}
\label{VgeL}
v\geq L>0,\quad x\in\mathbb{R}^n.
\end{equation}

On the other hand,  by Lemma~\ref{elemlem}(ii), we have
$$
\begin{aligned}
(Kg-f)(u-Kv)
&\geq C u^r v^{p\wedge r}(u+Kv)^{q-1+(p-r)_+} (u-Kv)^2 \\
&\geq C v^{p\wedge r}|u-Kv|^{q+1+(p\vee r)}.\\
\end{aligned}
$$
Therefore,
$$\Delta (u-Kv)_+ \geq \chi_{\{u>Kv\}} (Kg-f) \geq C (u-Kv)_+^{q+(p\vee r)},$$
owing to (\ref{VgeL}). In view of Lemma~\ref{DeltaW}(ii), we conclude that $u\leq Kv$.
\end{proof}
\medskip

Finally, let us  justify the statement in Remark~\ref{rmqRad}. Let $W_1:=(u-Kv)_+$ and $W_2:=(Kv-u)_+$.
By the proof of Lemma~\ref{DeltaW}(i), we know that the radially symmetric functions
$W_1$ and $W_2$ are subharmonic, hence (radially) nondecreasing.
Since $W_1W_2\equiv 0$, we necessarily have $\lim_{t\to\infty} W_1(t)=0$ or $\lim_{t\to\infty} W_2(t)=0$,
hence $W_1\equiv 0$ or $W_2\equiv 0$.

\subsection{Proof of Theorem \ref{thm_SpqrCounter}}

By adding up the two equations, we see that $u+v$ is harmonic and positive, hence constant. We therefore look
for a solution such that $v=1-u$, with $0<u<1$.
The system then becomes equivalent to
\begin{equation}
\label{PDEivp}
-\Delta u = u^p(1-u)^p[(1-u)^q-u^q] =: f(u).
\end{equation}
To show the existence of a nonconstant positive solution of (\ref{PDEivp}), we argue like in the proof of \cite[Theorem 1.4]{QS}.
Consider the initial value problem for the real function $u=u(t)$
\begin{equation}
\label{IVP}
-(t^{n-1}u')'=t^{n-1}f(u),\ \ t>0,\qquad u(0)=\eps, \quad u'(0)=0,
\end{equation}
with $0<\eps<\frac{1}{2}$. It is standard to check that either $u > 0$, $u'\le 0$ for all $t > 0$, or $u$ has a first zero $t=R$.
If the latter occurs, then the PDE in (\ref{PDEivp}) admits a positive solution $u$ in a finite ball with homogeneous Dirichlet conditions,
and also $0<u<\eps$.
But this is known to be impossible, owing to the Pohozaev identity, whenever
\begin{equation}
\label{Pohoz}
h(X):=Xf(X) - (p_S+1)F(X)\ge 0,\quad 0<X<\eps,
\end{equation}
where $F(X)=\int_0^X f(\tau)\,d\tau$ and $p_S=\frac{n+2}{n-2}$.
In the case of (\ref{PDEivp}) we have $h(0)=0$ and
$$h'(X)=Xf'(X)-p_Sf(X)\sim (p-p_S)X^p>0,\quad\hbox{as $X\to 0^+$}.$$
Therefore (\ref{Pohoz}) is true for $\eps>0$ sufficiently small, and the conclusion follows.

\mysection{Properties of half-spherical means. Proofs of the classification results in a half-space. }

{We start by recalling that the classical Phragmén-Lindel\"of maximum principle  states that a subharmonic function with sublinear growth in the half-space which is nonpositive on $\partial\mathbb{R}^n_+$ is also nonpositive in $\mathbb{R}^n_+$  (see for instance \cite{prowei}).

\begin{proof}[\underline{Proof of Theorem \ref{thm_general}}] \quad
We set $w=u-Kv$. Since $u,v$ have sublinear growth, so do  $|w|$ and  $w_+$.
Let $\psi\in C^2(\mathbb{R} )$ be convex, nondecreasing and such that
$0\leq \psi(t)\leq t^+$ for all $t\in \mathbb{R}$ and $\psi(t)>0$ for $t>0$. Then
$\psi(w)$ has sublinear growth.\\
Since the nonlinearities satisfy condition (\ref{condition_fg}), then  $w\geq 0$
implies $\Delta w\geq 0$.
Hence, we have $$\Delta \psi(w)=\psi'(w)\Delta w+\psi''(w)|\nabla w|^2\geq 0,$$
since
$\psi'(w)=0$ if $w\leq 0$ and $\Delta w\geq 0$ otherwise. Hence, $\psi(w)$ is subharmonic.
Since by hypothesis $w=0$ on $\partial \mathbb{R}^n_+$, $\psi(w)=0$ on $\partial \mathbb{R}^n_+$. By the
Phragmén-Lindel\"of maximum principle, we get $\psi(w)\leq 0$, so $w\leq 0$.
The same argument applied to $-w$ leads to $-w\leq 0$. Finally, we obtain $w=0$,
 i.e. $u=Kv$.
\end{proof}}

We will use the following notation: for any $R>0$, and any $y\in \partial \mathbb{R}^n_+$, we set
$$S_R^+(y)=\{x\in \mathbb{R}^n_+,\, |x-y|=R \},$$
$$B_R^+(y)=\{x\in \mathbb{R}^n_+,\, |x-y|\leq R \},$$
$$D_R(y)=\{y'\in \partial \mathbb{R}^n_+,\, |y'-y|\leq R \},$$
and write $S_R^+$, $B_R^+$ and $D_R$ respectively for
$S_R^+(0)$, $B_R^+(0)$ and $D_R(0)$.
We recall  the definition of the half-spherical means of a function $w$, namely
$$
[w]_y(R)=\frac{1}{R^2 |S_R^+|}\int_{S_R^+(y)} w(x)\;x_n\,d\sigma_R(x),\quad R>0,\ y\in \partial \mathbb{R}^n_+,
$$
and  denote $[w]:=[w]_0$.
Observe that $[x_n]$ is a positive constant (independent of $R$).
\smallskip

The following lemma provides a basic computation for the derivative of the half-spherical mean with respect to the radius.

 \begin{lem}
\label{lem_derivee_moyenne}
Let $u\in C^2(\overline{\mathbb{R}^n_+})$. For any $y\in \partial \mathbb{R}^n_+$ and $R>0$, we have :
$$\frac{d}{dR} [u]_y(R)=\frac{1}{R^2 |S_R^+|}\left[ \int_{B_R^+(y)} \Delta u\;x_n\,dx-\int_{D_R(y)}
u(y') \,dy' \right].$$
\ \end{lem}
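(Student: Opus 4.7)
The plan is to reduce the computation to a differentiation under the integral sign after rescaling to the unit half-sphere, and then convert the surface integral of $\partial_\nu u$ into a volume integral of $\Delta u$ via Green's second identity with the harmonic function $x_n$, which encodes the weight. The subtlety, and the main obstacle, is that the weight $x_n$ is not $R$-invariant, so after rescaling the mean is not simply a spherical average of $u(y+R\omega)$ but carries an extra factor $1/R$; this produces a spurious term that must cancel exactly against a boundary contribution from Green's identity on $S_R^+(y)$.

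First I would change variables $x=y+R\omega$ with $\omega\in S_1^+$. Since $y_n=0$, we have $x_n=R\omega_n$ and $d\sigma_R(x)=R^{n-1}d\sigma_1(\omega)$, and using $|S_R^+|=R^{n-1}|S_1^+|$ one obtains the "rescaled" expression
$$
[u]_y(R)=\frac{1}{R\,|S_1^+|}\int_{S_1^+} u(y+R\omega)\,\omega_n\,d\sigma_1(\omega).
$$
Differentiating in $R$ and unscaling produces two contributions:
$$
\frac{d}{dR}[u]_y(R)=-\frac{[u]_y(R)}{R}+\frac{1}{R^2|S_R^+|}\int_{S_R^+(y)}\partial_\nu u(x)\,x_n\,d\sigma_R(x),
$$
where $\nu=(x-y)/R$ is the outward unit normal on $S_R^+(y)$.

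Next I would apply Green's second identity on $B_R^+(y)$ with the pair $(u,x_n)$. Since $x_n$ is harmonic, vanishes on the flat piece $D_R(y)$, has outward normal derivative $-1$ there, and satisfies $\partial_\nu x_n=x_n/R$ on $S_R^+(y)$, one finds
$$
\int_{B_R^+(y)}\Delta u\;x_n\,dx=\int_{S_R^+(y)}\partial_\nu u\;x_n\,d\sigma_R-\int_{S_R^+(y)} u\,\frac{x_n}{R}\,d\sigma_R+\int_{D_R(y)} u(y')\,dy'.
$$
Solving for $\int_{S_R^+(y)}\partial_\nu u\,x_n\,d\sigma_R$ and substituting into the expression for the derivative, the term $\frac{1}{R^3|S_R^+|}\int_{S_R^+(y)}u\,x_n\,d\sigma_R$ is exactly $[u]_y(R)/R$, which cancels the spurious $-[u]_y(R)/R$. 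The remaining terms are precisely the volume integral and the flat-boundary integral appearing in the statement.

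The only technical point requiring care is this cancellation, since it relies on the specific choice of weight $x_n$; any other power of $x_n$ would fail to be harmonic and would introduce an additional volume term. I would carry out the rescaling step carefully so that the factor $1/R$ arising from the derivative of $1/R$ in the rescaled integrand is tracked consistently, and verify that the rescaled surface element, weight and normal derivative combine to reproduce the asserted $1/(R^2|S_R^+|)$ prefactor.
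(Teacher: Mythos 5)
Your proof is correct and is essentially the same argument as the paper's, read in the opposite direction: the paper starts from the right-hand side, applies the divergence theorem to $\operatorname{div}(x_n\nabla u - u\,e_n)=x_n\Delta u$ (which is exactly Green's second identity with the harmonic weight $x_n$), and rescales to identify the result with $R^2|S_R^+|\,\frac{d}{dR}[u]_y(R)$, the cancellation you isolate appearing there as the expansion of $\frac{d}{dR}\bigl(R[u]_y(R)\bigr)-[u]_y(R)$. Both derivations hinge on the same two facts -- the rescaled form of the half-spherical mean and Green's identity on $B_R^+(y)$ with the harmonic function $x_n$ -- so there is no substantive difference in method.
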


\begin{proof} We have
\begin{align*}
I=&\int_{B_R^+(y)} \Delta u \; x_n\,dx-\int_{D_R(y)}
u(y') \,dy'=\int_{B_R^+(y)}div(\nabla u \; x_n-u\, e_N) \,dx-\int_{D_R(y)}
u(y')\, dy'\\
=& \int_{S_R^+(y)} \nabla u\cdot\vec{\nu}\; x_n \,d\sigma_R(x)-
\int_{S_R^+(y)} u\; \nu _n \,d\sigma_R(x),
\end{align*}
since $x_n=0$ on $\partial \mathbb{R}^n_+$.
Then, setting $x=y+Rz$,
since $\nu(x)=z$ and $\nu_n(x)=\frac{x_n}{R}$, we have
\begin{align*}
 I&=R^n\int_{S_1^+} \nabla u(y+Rz)\cdot z\; z_n \;d\sigma_1(z)-\frac{1}{R}\int_{S_R^+(y)} x_n u
 \; d\sigma_R(x)\\
&=
R^n\frac{d}{dR} \int_{S_1^+} u(y+Rz)\, z_n \;d\sigma_1(z)-R|S_R^+|[u]_y(R)\\
&= R^n\frac{d}{dR} \bigl(R\,|S_1^+| [u]_y(R)\bigr)-R|S_R^+|[u]_y(R)\\
&=R|S_R^+|\Bigl( \frac{d}{dR}\bigl(R[u]_y(R)\bigr)-[u]_y(R)\Bigr)=R^2|S_R^+|\frac{d}{dR} [u]_y(R).
\end{align*}
\end{proof}

Next, we give a the generalization of the Phragmén-Lindel\"of maximum principle,
based on the above monotonicity property, which will play an important role.

\begin{lem}
\label{lemme_principal}
 Let $w\in C^2(\overline{\mathbb{R}^n_+})$ be such that $w\leq 0$ on $\partial \mathbb{R}^n_+$ and
$\Delta w\geq 0$ on the set~$\{w>0\}$.
If we assume
\begin{equation}
 \label{condition_limite_lem_principal}
\underset{R \rightarrow \infty }{\lim\inf} [w^+](R)=0,
\end{equation}
 then $w\leq 0$ in $\mathbb{R}^n_+$.

\end{lem}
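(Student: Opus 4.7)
My plan is to regularize $w^+$ by a smooth convex surrogate so that Lemma \ref{lem_derivee_moyenne} applies, deduce that the half-spherical mean of the surrogate is nondecreasing, and then extract $w\leq 0$ from the hypothesis $\liminf_R[w^+](R)=0$ together with the pointwise bound by $w^+$. The key conceptual idea is the same as in the proof of Theorem \ref{thm_general} in the excerpt, but now combined with the monotonicity formula instead of the classical Phragm\'en-Lindel\"of principle.

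To set things up, I would choose $\psi\in C^2(\mathbb{R})$ that is convex and nondecreasing, with $\psi(t)=0$ for $t\leq 0$, $\psi(t)>0$ for $t>0$, and $0\leq \psi(t)\leq t^+$ everywhere (such $\psi$ exists: smooth out $t\mapsto (t^+)^2$ suitably and cap its growth). Set $u:=\psi(w)\in C^2(\overline{\mathbb{R}^n_+})$. Then $u\geq 0$ throughout, and $u=0$ on $\partial\mathbb{R}^n_+$ because $w\leq 0$ there. By the chain rule,
$$\Delta u=\psi'(w)\Delta w+\psi''(w)|\nabla w|^2.$$
On $\{w\leq 0\}$ the first term vanishes since $\psi'(w)=0$, while on $\{w>0\}$ it is nonnegative by assumption. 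The second term is nonnegative since $\psi$ is convex. Hence $\Delta u\geq 0$ in all of $\mathbb{R}^n_+$.

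Next, apply Lemma \ref{lem_derivee_moyenne} with $y=0$: since $\Delta u\geq 0$, $x_n\geq 0$, and $u\equiv 0$ on $D_R\subset\partial\mathbb{R}^n_+$,
$$\frac{d}{dR}[u](R)=\frac{1}{R^2|S_R^+|}\left[\int_{B_R^+}\Delta u\cdot x_n\,dx-\int_{D_R}u(y')\,dy'\right]\geq 0,$$
so $R\mapsto [u](R)$ is nondecreasing on $(0,\infty)$. Because $\psi(t)\leq t^+$, we have $0\leq[u](R)\leq[w^+](R)$, and monotonicity gives $\sup_R [u](R)=\lim_{R\to\infty}[u](R)=\liminf_{R\to\infty}[u](R)\leq\liminf_{R\to\infty}[w^+](R)=0$. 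Therefore $[u](R)=0$ for every $R>0$.

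To finish, note that the integrand $x_n u$ in $[u](R)$ is nonnegative and continuous on $S_R^+$, so $[u](R)=0$ forces $u=0$ on $S_R^+\cap\mathbb{R}^n_+$ (where $x_n>0$); letting $R$ vary we conclude $u\equiv 0$ on $\mathbb{R}^n_+$. Since $\psi(t)>0$ precisely when $t>0$, this yields $w\leq 0$ in $\mathbb{R}^n_+$, as required. The only genuine subtlety in the argument is that $w^+$ itself is merely Lipschitz, so Lemma \ref{lem_derivee_moyenne} cannot be applied to it directly; this is precisely the role of the smooth approximation $\psi(w)$, which enjoys the same vanishing on $\partial\mathbb{R}^n_+$ and the same pointwise control while providing the $C^2$ regularity needed for the monotonicity formula.
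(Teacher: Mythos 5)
Your proof is correct and follows essentially the same route as the paper: smooth out $w^+$ via a convex nondecreasing $\psi$ with $0\le\psi\le(\cdot)^+$, observe that $\psi(w)$ is subharmonic and vanishes on the boundary, then use the monotonicity of the half-spherical mean from Lemma \ref{lem_derivee_moyenne} together with $[\psi(w)]\le[w^+]$ and the liminf hypothesis to force $[\psi(w)]\equiv 0$, hence $\psi(w)\equiv 0$ and $w\le 0$. The only difference is presentational: you spell out the final step (nonnegative integrand vanishing on each hemisphere) that the paper leaves implicit.
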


\begin{proof}[\underline{Proof}] Let $\psi\in C^2(\mathbb{R} )$ be convex, nondecreasing and such that
$0\leq \psi(t)\leq t^+$ for all $t\in \mathbb{R}$ and $\psi(t)>0$ for $t>0$. Then, for any $R>0$,
$0\leq [\psi(w)](R)\leq [w^+](R)$. Therefore,
\begin{equation}
\label{decaypsi2}
 \underset{R \rightarrow \infty }{\lim\inf} [\psi(w)](R)=0.
 \end{equation}
We also have $$\Delta \psi(w)=\psi'(w)\Delta w+\psi''(w)|\nabla w|^2\geq 0,$$
since
$\psi'(w)=0$ if $w\leq 0$ and $\Delta w\geq 0$ otherwise.
Since $w\leq 0$ on $\partial \mathbb{R}^n_+$, then $\psi(w)=0$ on $\partial \mathbb{R}^n_+$ so
Lemma \ref{lem_derivee_moyenne} gives
 that $[\psi(w)](R)$ is nondecreasing.
But its limit as $R\rightarrow \infty$ is zero by
(\ref{decaypsi2}), so $[\psi(w)](R)=0$ for all $R>0$. This implies that
$\psi(w)\equiv 0$, hence $w\leq 0$.
\end{proof}

{

The proof of Theorem \ref{thm_general2} is  an easy  consequence of
 Lemma~\ref{lemme_principal}.

\begin{proof}[\underline{Proof of Theorem \ref{thm_general2}}]
Let $w=u-Kv$. Observe that $w=0$ on $\partial \mathbb{R}^n_+$ and $w\,\Delta w\geq 0$
 thanks to (\ref{condition_fg}). Hence we can apply Lemma \ref{lemme_principal}
to $w$ and $-w$ and conclude that $w=0$.
\end{proof}
}

\medskip

 We now turn to the proof of Theorem \ref{thm_systeme_S_rsd_generalise}.
In order to treat solutions without growth restrictions at infinity in the case of positive nonlinearities,
we will need to exploit some further properties of half-spherical means for superharmonic functions.

 The following lemma will permit to us to split the proof of Theorem \ref{thm_systeme_S_rsd_generalise} in the following way:
 either the superharmonic functions $u,v$ grow at infinity at least like $x_n$ and then we  apply the nonexistence result for weighted inequalities in Lemma \ref{lem_estimation_integrale_Liouville}, or the  half-spherical means of $u,v$ decay at infinity and we can use  Theorem \ref{thm_general2}.
\begin{lem}
\label{lem_lambda}
Suppose that $u\in C^2(\overline{\mathbb{R}^n_+})$ is nonnegative and superharmonic in $\mathbb{R}^n_+$.
\begin{itemize}
\item[(i)] For each $y\in \partial \mathbb{R}^n_+$, the function $R\mapsto [u]_y(R)$ is nonincreasing and its limit is independent of $y$.
\item[(ii)] Denote $L(u):=\displaystyle\lim_{R\to\infty} [u](R) \in [0,\infty)$.
Then we have
$$u(x)\geq \frac{L(u)}{[x_n]}\,x_n,\quad  x\in \mathbb{R}^n_+.$$
\end{itemize}
 \end{lem}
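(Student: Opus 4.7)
The plan proceeds in three stages, treating the monotonicity in (i), then the bound (ii), and finally deducing the independence of the limit in (i) from (ii).

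The monotonicity is immediate from Lemma \ref{lem_derivee_moyenne}: since $u\ge 0$ on $\overline{\mathbb{R}^n_+}$ and $\Delta u\le 0$ in $\mathbb{R}^n_+$, both contributions
\[
\int_{B_R^+(y)}\Delta u\cdot x_n\,dx\le 0\qquad \text{and}\qquad -\int_{D_R(y)}u(y')\,dy'\le 0,
\]
are nonpositive, so $\frac{d}{dR}[u]_y(R)\le 0$. Hence $R\mapsto [u]_y(R)$ is nonincreasing and the limit $L_y:=\lim_{R\to\infty}[u]_y(R)\in [0,\infty)$ exists.

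For (ii), I would compare $u$ with its harmonic extension on large half-balls. Fix $x_0\in\mathbb{R}^n_+$ and $R>|x_0|$, and let $H_R$ solve $\Delta H_R=0$ in $B_R^+$ with $H_R=u$ on $\partial B_R^+$. The minimum principle applied to the superharmonic function $u-H_R$ (which has nonnegative boundary data) gives $u(x_0)\ge H_R(x_0)$. Representing $H_R(x_0)$ through the Poisson kernel $K_R(x_0,\cdot)$ of $B_R^+$ and discarding the nonnegative contribution from the flat part $D_R$,
\[
u(x_0)\ge \int_{S_R^+}K_R(x_0,y)\,u(y)\,d\sigma_R(y).
\]
The key, and main technical, step is to establish the uniform asymptotic
\[
K_R(x_0,y)=\bigl(1+o(1)\bigr)\,\frac{(x_0)_n\,y_n}{[x_n]\,R^2\,|S_R^+|}\qquad (R\to\infty,\ \text{uniformly in }y\in S_R^+).
\]
This follows by the method of images applied to the Dirichlet Green's function of $B_R^+$: setting $x_0^*:=x_0-2(x_0)_n e_n$,
\[
K_R(x_0,y)=\frac{R^2-|x_0|^2}{|S^{n-1}|\,R}\Bigl[|y-x_0|^{-n}-|y-x_0^*|^{-n}\Bigr],\quad y\in S_R^+,
\]
and one Taylor-expands using $|y-x_0^*|^2-|y-x_0|^2=4y_n(x_0)_n$ and $|y-x_0|=R(1+O(|x_0|/R))$ uniformly on $S_R^+$, with the leading constant matched via the explicit value $[x_n]=1/n$ (computed directly from the definition). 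Substituting the asymptotic and recognizing $[u](R)$,
\[
u(x_0)\ge (1+o(1))\,\frac{(x_0)_n}{[x_n]}\,[u](R)\xrightarrow[R\to\infty]{}\frac{L(u)}{[x_n]}(x_0)_n.
\]

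For the independence of $L_y$ on $y$ in (i): for any $y\in\partial \mathbb{R}^n_+$, the translate $u_y(z):=u(z+y)$ is again nonnegative and superharmonic in $\mathbb{R}^n_+$ (since $y_n=0$), and a change of variable shows $[u]_y(R)=[u_y](R)$, whence $L_y(u)=L(u_y)$. Applying (ii) to $u_y$ yields $u(z+y)\ge \frac{L_y(u)}{[x_n]}z_n$ for all $z\in\mathbb{R}^n_+$, i.e.\ $u(w)\ge \frac{L_y(u)}{[x_n]}w_n$ for every $w\in\mathbb{R}^n_+$. Taking the half-spherical mean $[\cdot]$ at the origin produces $L_0\ge L_y$; exchanging the roles of $0$ and $y$ gives the reverse inequality, so $L_y=L_0$.
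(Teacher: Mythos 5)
Your proof is correct, but it takes a genuinely different route from the paper's on both delicate points. For the bound in (ii), the paper uses only the positivity of $P_1(X;\cdot)$ for a single fixed $X$ (via a compactness argument), obtaining the weaker inequality $u(x)\ge 2c_0|S_1^+|L(u)\,x_n$ with a suboptimal constant, and then recovers the sharp constant $1/[x_n]$ by a bootstrap: setting $\tilde c=\max\{c:u\ge c\,x_n\}$ and showing $\tilde c<[x_n]^{-1}L(u)$ leads to a contradiction after applying the crude bound to $z=u-\tilde c\,x_n$. You instead write down the explicit Poisson kernel of $B_R^+$ by the method of images and establish the uniform asymptotic
$K_R(x_0,y)\sim (x_0)_n y_n/([x_n]R^2|S_R^+|)$ on $S_R^+$, which produces the sharp constant directly in one pass, with no bootstrap. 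This is cleaner and entirely correct (I checked: $|y-x_0^*|^2-|y-x_0|^2=4y_n(x_0)_n$ gives $|y-x_0|^{-n}-|y-x_0^*|^{-n}=(1+O(|x_0|/R))\,2ny_n(x_0)_n/R^{n+2}$, uniformly in $y\in S_R^+$ including near $D_R$; multiplied by $(R^2-|x_0|^2)/(n\omega_n R)$ and using $[x_n]=1/n$, $|S_R^+|=\tfrac12 n\omega_n R^{n-1}$, the constants match). The trade-off is that the paper's version avoids any explicit computation and extends more readily to situations where no closed-form kernel is available, while yours is shorter and more transparent.

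For the $y$-independence of the limit in (i), the paper gives a direct argument via L'H\^opital applied to $R^{-(n+2)}\int_{B_R^+(y)}x_n u\,dx$ together with the inclusion $B_R^+(y_1)\subset B_{R+|y_1-y_2|}^+(y_2)$; this is self-contained and does not rely on (ii). You derive independence as a corollary of (ii), noting that your proof of (ii) uses only $L_0=\lim_R[u](R)$ and not the independence claim, so there is no circularity. Both orderings are logically sound; the paper's choice keeps (i) elementary (and, as it happens, the paper does invoke the independence result from (i) inside Step~2 of its proof of (ii), so its ordering is forced, whereas your ordering is made possible precisely because your Poisson-kernel asymptotic works at the origin alone).
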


Assertion (ii) can be deduced from a more general and rather difficult result from~\cite{Gardiner};
see Remark~\ref{rmqGardiner} below. We will provide a direct, more elementary proof.

\begin{proof}[\underline{Proof}]
(i) That   $[u]_y(R)$ is nonincreasing in $R$ is a direct consequence of Lemma \ref{lem_derivee_moyenne}.
Set
\begin{equation}
 \label{defmu}
\mu(y):=\lim_{R\to\infty}R^{-(n+1)}\int_{S_R^+(y)}x_n u\, d\sigma_R = |S_1^+| \lim_{R\to\infty} [u]_{y}(R).
\end{equation}
By L'H\^opital's rule, (\ref{defmu}) implies that
$$\lim_{R\to\infty}R^{-(n+2)}\int_{B_R^+(y)}x_n u \,dx
=\lim_{R\to\infty}R^{-(n+2)}\int_0^R\int_{S_r^+(y)}x_n u\, d\sigma_r\,dr = {\mu(y)\over n+2}$$
(with nonincreasing limit).
Now, for $y_1,y_2\in \partial \mathbb{R}^n_+$, we have $B_R^+(y_1)\subset B_{R+|y_1-y_2|}^+(y_2)$,
hence
$$R^{-(n+2)}\int_{B_R^+(y_1)}x_n u \,dx \le (1+R^{-1}|y_1-y_2|)^{n+2}(R+|y_1-y_2|)^{-(n+2)}\int_{B_{R+|y_1-y_2|}^+(y_2)}x_n u \,dx.$$
By letting $R\to\infty$, we deduce that $ \mu(y_1)\le  \mu(y_2)$,
which proves that $\mu(y)$ is independent of $y$.

\medskip

(ii) The proof is divided in three steps.
\smallskip

{\it Step 1.} We recall several properties of Poisson kernels, {that is, normal derivatives of Green functions}. For $R>0$, we denote by $P_R(x;y)$ the Poisson kernel of $B_R^+$.
Then for any $\varphi\in C(\partial B_R^+)$, the unique harmonic function $v$ in $B_R^+$ with boundary value $\varphi$
is given by
$$v(x)=\int_{\partial B_R^+}P_R(x;y)\varphi(y)\, d\sigma_R(y).$$
A simple rescaling argument shows that
\begin{equation}
 \label{scalingPoisson}
P_R(x;y)=R^{1-n}P_1(R^{-1}x;R^{-1}y).
\end{equation}
On the other hand, for each $Y\in \partial B_1^+$, $P_1(\cdot,Y)$ is positive in $B_1^+$ (by the strong maximum principle,
since it is harmonic, nonnegative and nontrivial).
For each $X\in B_1^+$, since $Y\mapsto P_1(X;Y)$ is continuous on $\partial B_1^+$, it follows that
\begin{equation}
 \label{infPoisson}
 c(X):=\inf_{Y\in \partial B_1^+} P_1(X;Y)>0.
\end{equation}

\medskip

{\it Step 2.} Fix $x\in H$, denote by $\tilde x=(x_1,\cdots,x_{n-1},0)$ its projection onto $\partial \mathbb{R}^n_+$ and set $R=2x_n$.
Since $u(\tilde x+\cdot)\ge 0$ is superharmonic in $B_R^+$,
the maximum principle implies that, for all $z\in B_R^+$,
$$u(\tilde x+z)\ge\int_{\partial B_R^+}P_R(z;y)u(\tilde x+y)\, d\sigma_R(y)
\ge\int_{S_R^+}P_R(z;y)u(\tilde x+y)\, d\sigma_R(y),$$
hence
$$u(\tilde x+z)\ge R^{1-n}\int_{S_R^+}P_1(R^{-1}z;R^{-1}y)u(\tilde x+y)\, d\sigma_R(y),$$
due to (\ref{scalingPoisson}).

Now take $z=(0,\cdots,0,x_n)$, set $X=(0,\cdots,0,1/2)$ and $c_0=c(X)$ (see~(\ref{infPoisson})).
Using (\ref{infPoisson}) and assertion~(i), we obtain
$$
\begin{aligned}
u(x)&\ge R^{1-n}\int_{S_R^+}P_1(X;R^{-1}y)u(\tilde x+y)\, d\sigma_R(y)
\ge c_0R^{1-n}\int_{S_R^+}u(\tilde x+y)\, d\sigma_R(y)\\
&= c_0R^{1-n}\int_{S_R^+(\tilde x)}u(y)\, d\sigma_R(y)
\ge c_0R^{-n}\int_{S_R^+(\tilde x)}y_n\,u(y)\, d\sigma_R(y) \\
&\ge  c_0|S_1^+|R\,[u]_{\tilde x}(R) \ge 2c_0|S_1^+|L(u)\,x_n.
\end{aligned}$$

\medskip

{\it Step 3.} Define $E=\{c\ge 0;\ u\ge c\,x_n \ \hbox{in $\mathbb{R}^n_+$}\}$.
The set $E$ is closed and nonempty. For any $c\in E$, we have $L(u)\ge c[x_n]$, hence $E$ is bounded and
$$\tilde c:=\max E\le c^*:=[x_n]^{-1}L(u).$$ Assume for contradiction that $\tilde c<c^*$.
Setting $z=u-\tilde c\, x_n$, we see that $z$ is nonnegative, superharmonic and that $L(z)>0$.
By the result of Step~2 applied to $z$, it follows that $z\ge \eps x_n$ for some $\eps>0$.
But this contradicts the definition of $\tilde c$.
Therefore $\tilde c=c^*$ and the result is proved.
\end{proof}

\begin{rmq}
\label{rmqGardiner}
(i) For any subharmonic function $w$ on $\mathbb{R}^n_+$, the
Corollary to Theorem 1 on page 341 in  \cite{Gardiner}
asserts the following: if $w_+$ has a harmonic majorant, if $\displaystyle\liminf_{R\to\infty}\, [w](R) \leq 0$
and if,  for all $y\in \partial \mathbb{R}^n_+$,  $\displaystyle\liminf_{R\to 0} R\, [w]_y(R) \leq 0$, then $w\le 0$.
To deduce Lemma~\ref{lem_lambda}(ii) from this,
set $L=L(u)$ and $w=\frac{L}{[x_n]}\,x_n-u$. Then $w$ is subharmonic,
$w_+$ has a harmonic majorant $\frac{L}{[x_n]}\,x_n$
and $[w](R)=L-[u](R)\underset{R \rightarrow \infty}{\longrightarrow} 0$.
Moreover, for all $y\in \partial \mathbb{R}^n_+$, $\displaystyle\liminf_{R\to 0}R [w]_y(R)
\leq \displaystyle\liminf_{R\to 0}R\textstyle\frac{L}{[x_n]}\,[x_n] =0$.
Therefore, $w\leq 0$, i.e., $u\ge \frac{L}{[x_n]}\,x_n$.

\medskip

(ii) From Lemmas~\ref{lem_derivee_moyenne} and \ref{lem_lambda}(ii),
we may retrieve the well-known fact that any positive harmonic function in $\mathbb{R}^n_+$, such that $u\in C^2(\overline{\mathbb{R}^n_+})$
and $u=0$ on the boundary, is necessarily of the form $u=c\,x_n$ with $c>0$.

We first claim that $L(u)>0$. Indeed, $[u](R)$ is independent of $R$ by Lemma~\ref{lem_derivee_moyenne}.
Therefore $L(u)=0$ would imply $[u](R)\equiv 0$, from which we readily infer $u\equiv 0$.
Let then $z=u-\frac{L(u)}{[x_n]}\,x_n$. Then $z$ is harmonic and Lemma~\ref{lem_lambda}(ii) guarantees $z\ge 0$.
Since $L(z)=0$, the above argument yields $z\equiv 0$.
\end{rmq}

\medskip

\begin{proof}[\underline{Proof of Theorem \ref{thm_systeme_S_rsd_generalise}}.]
(i) Since the functions $f$ and $g$ are nonnegative, $u$ and $v$ are superharmonic. Therefore, by Lemma \ref{lem_derivee_moyenne}(ii),
 \begin{equation}
 \label{notation_lambda_mu}
L(u):=\underset{R\rightarrow \infty}{\lim} [u](R)\in [0,\infty) \qquad  \text{and}\qquad
L(v):= \underset{R\rightarrow \infty}{\lim} [v](R)\in [0,\infty).
  \end{equation}

First, we observe that we cannot have simultaneously $L(u)>0$ and $L(v)>0$.
Indeed, by Lemma~\ref{lem_lambda}(ii), this would imply that, for some $c>0$, and all $x\in \mathbb{R}^n_+$
 $$u(x)\geq c\,x_n \quad \text{ and } \quad v(x)\geq c\,x_n,$$
 hence $-\Delta u\geq (c \,x_n)^{\sigma}$ in $\mathbb{R}^n_+$, but this contradicts
Lemma \ref{lem_estimation_integrale_Liouville}.

Assume for instance $L(u)=0$. Setting $w=u-Kv$, we have $w^+\leq u$, hence
$$\underset{R\rightarrow +\infty}{\lim} [w^+](R)=0.$$
By Lemma \ref{lemme_principal}, this implies $w\leq 0$, i.e. $u\leq Kv$.
If $L(v)=0$, we similarly obtain $u\geq Kv$.

\smallskip

(ii) By what we just proved, it is enough to show that $L(u)=L(v)=0$. Assume $L(v)>0$.
Therefore, $v\ge c x_n$ for $c>0$, and
\begin{equation}
\label{system_ineqs1}
\left\{\quad{\alignedat2
-\Delta u&\geq cx_n^p u^r \\
 -\Delta v&\geq cx_n^s u^q\qquad\mbox{in }\;\mathbb{R}^n_+.
 \endalignedat}\right.
\end{equation}
If the first condition in \eqref{condition_rnsd} is satisfied, then the first inequality in \eqref{system_ineqs1} combined with Lemma \ref{lem_estimation_integrale_Liouville} yields $u\equiv 0$.

Hence we can assume that the second condition in \eqref{condition_rnsd} is satisfied. Set $\Psi :=x_n|x|^{-n}$, so that $-\Delta \Psi = 0 $ in $\mathbb{R}^n_+\setminus\{0\}$.
Let $$c_0:= \inf_{\partial B_1\cap \mathbb{R}^n_+}\frac{u}{\Psi}.$$ Note $c_0>0$ (if $u=0$ on $\partial \mathbb{R}^n_+$, this follows from Hopf's lemma). Since $u$ is superharmonic in $\mathbb{R}^n_+$, $u\ge c\Psi$ on $\partial (\mathbb{R}^n_+\setminus B_1)$ and $\Psi\to 0$ as $|x|\to \infty$, the maximum principle implies
$$u\ge cx_n|x|^{-n}\qquad\mbox{ in }\; \mathbb{R}^n_+\setminus B_1.
$$
Plugging this into the second inequality of \eqref{system_ineqs1} we get
$$ - \Delta v\geq c \, {x_n}^{s+q}|x|^{-nq}$$
in $\mathbb{R}^n_+\setminus B_1$, which contradicts Lemma \ref{lem_estimation_integrale_Liouville},
{applied with $r=0$ and $\kappa=s-(n-1)q$}, 
in view of the second condition in \eqref{condition_rnsd}.

In case $L(u)>0$ we use \eqref{condition_rnsd2} in a similar way, to conclude the proof of (ii).

\smallskip

(iii) By (ii), we know that either $(u,v)$ is semi-trivial or $u=Kv$. In the latter case,
since $\min(p+r, q+s)\leq (n+1)/(n-1)$, we deduce from
Lemma~\ref{lem_estimation_integrale_Liouville} that $u=0$ or $v=0$.
\end{proof}

\mysection{A priori estimates and existence}

We consider the following system with  general lower order terms, of which \eqref{system_pqr_Dir} is a  particular case

\begin{equation}
\label{system_pqr_Dir2}
\left\{\quad{\alignedat2
-\Delta u&=u^rv^p\bigl[a(x)v^q-c(x)u^q\bigr]+h_1(x,u,v), &\qquad&x\in \Omega,\\
 -\Delta v&=v^ru^p\bigl[b(x)u^q-d(x)v^q\bigr]+h_2(x,u,v), &\qquad&x\in \Omega,\\
u&=v=0, &\qquad&x\in \partial\Omega,\\
 \endalignedat}\right.
\end{equation}
where $\Omega$ is a smooth bounded domain of $\mathbb{R}^n$.

Theorem~\ref{AprioriBound} is a consequence of the following more general statements
on a priori estimates and existence.

\begin{thm}
\label{AprioriBound2}
Let $p, r\ge 0$, $q>0$, $q\ge |p-r|$, and
\begin{equation}
\label{hypApriori1}
q+r\ge 1,\qquad 1<\sigma:=p+q+r<\frac{n+2\phantom{_+}}{(n-2)_+}.
\end{equation}
Let $a, b, c, d\in C(\overline\Omega)$ satisfy $a, b>0$, $c, d\ge 0$ in $\overline\Omega$ and
\begin{equation}
\label{hypApriori2}
\inf_{x\in \Omega}\,\left[a(x)b(x)-c(x)d(x)\right]\,>0.
\end{equation}
Let $h_1, h_2\in C(\overline\Omega\times [0,\infty)^2)$ satisfy
\begin{equation}
\label{hypApriori3}
\lim_{u+v\to\infty} \frac{h_i(x,u,v)}{(u+v)^\sigma}=0,\quad i=1,2,
\end{equation}
and assume that one of the following two sets of assumptions is satisfied:
\begin{equation}
\label{hypApriori4}
\left\{\quad{\alignedat2
&r\le 1,\quad \hbox{and, setting }\: \bar m := \min\{\inf_{x\in\Omega} a(x), \inf_{x\in\Omega} b(x)\}>0,\\
&\liminf_{v\to\infty,\ u/v \to 0} \frac{h_1(x,u,v)}{u^rv^{p+q}} >-\bar m,\quad
\liminf_{u\to\infty, \ v/u \to 0} \frac{h_2(x,u,v)}{v^ru^{p+q}} >-\bar m,
\endalignedat}\right.
\end{equation}
or
\begin{equation}
\label{hypApriori5}
\left\{\quad{\alignedat2
&m:=\min\{\inf_{x\in\Omega} c(x), \inf_{x\in\Omega}d(x)\}>0, \quad\mbox{ and } \\
&\limsup_{u\to\infty,\ v/u \to 0} \frac{h_1(x,u,v)}{u^{r+q}v^p} <m,\quad
\limsup_{v\to\infty,\ u/v \to 0} \frac{h_2(x,u,v)}{v^{r+q}u^p} <m
 \endalignedat}\right.
\end{equation}
(with uniform limits with respect to  $x\in\overline\Omega$ in (\ref{hypApriori3})--(\ref{hypApriori5})). 
Then there exists $M>0$ such that any positive classical solution $(u,v)$ of
(\ref{system_pqr_Dir2}) satisfies
\begin{equation}
\label{AprioriBoundM}
\sup_\Omega u \leq M,\qquad \sup_\Omega v \leq M.
\end{equation}
\end{thm}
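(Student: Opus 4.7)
My plan is to apply the blow-up (rescaling) method of Gidas--Spruck, adapted to this noncooperative system. Arguing by contradiction, assume no uniform bound $M$ exists; then there is a sequence $(u_k,v_k)$ of positive classical solutions of \eqref{system_pqr_Dir2} with $M_k:=\max(\|u_k\|_\infty,\|v_k\|_\infty)\to\infty$, and, after passing to a subsequence, $M_k=u_k(x_k)$ for some $x_k\in\Omega$. With the homogeneity-adapted scale $\lambda_k:=M_k^{-(\sigma-1)/2}\to 0$, I would form
\[
\tilde u_k(y):=M_k^{-1}u_k(x_k+\lambda_k y),\qquad \tilde v_k(y):=M_k^{-1}v_k(x_k+\lambda_k y),
\]
on the expanding domains $\Omega_k:=\lambda_k^{-1}(\Omega-x_k)$. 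The relation $\lambda_k^2 M_k^{\sigma-1}=1$ keeps the leading nonlinearity scale-invariant, while \eqref{hypApriori3} ensures that the rescaled lower-order terms $M_k^{-1}\lambda_k^2 h_i$ vanish locally uniformly. The rescaled pair satisfies $0\le\tilde u_k,\tilde v_k\le 1$, $\tilde u_k(0)=1$, and the rescaled coefficients converge to constants $a_\infty,\dots,d_\infty$ inheriting $a_\infty b_\infty>c_\infty d_\infty$ from \eqref{hypApriori2}.

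Next, standard $C^{2,\alpha}_{loc}$ interior estimates and diagonal extraction yield, along a subsequence, convergence in $C^2_{loc}$ to a bounded nonnegative classical solution $(\tilde u_\infty,\tilde v_\infty)$ of the limiting constant-coefficient version of \eqref{system_Spqr}. Depending on whether $\mathrm{dist}(x_k,\partial\Omega)/\lambda_k\to\infty$ or stays bounded, the limit is set on $\mathbb{R}^n$ or, after translation, on $\mathbb{R}^n_+$ with zero Dirichlet condition. The half-space case is excluded by Corollary~\ref{thm_half2}: since $f(Ks,s)=K^r(a_\infty-c_\infty K^q)s^\sigma$ with $a_\infty-c_\infty K^q>0$ by Proposition~\ref{Prop-uKv}, and $\sigma>1$, the limit is trivial, contradicting $\lim_k\tilde u_k(0)=1$. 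In the whole-space case, Corollary~\ref{rmqNonneg} (applicable since $q+r\ge 1$ and $\sigma<(n+2)/(n-2)_+$) forces the limit to be a constant pair $(C_1,0)$ or $(0,C_2)$; the normalization pins it to $(1,0)$.

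The main obstacle is to rule out this semi-trivial constant limit in the whole-space case---a difficulty noted in Remark~\ref{remruleout}---and it is precisely to handle it that the asymptotic sign condition \eqref{hypApriori4} or \eqref{hypApriori5} is imposed. Under \eqref{hypApriori5}, a direct maximum-principle argument at $x_k$ suffices: combining $c(x_k)\ge m$ with $\limsup h_1/(u^{r+q}v^p)<m$ in the regime $v_k(x_k)/u_k(x_k)\to 0$ gives
\[
-\Delta u_k(x_k)\le -\eta\, M_k^{r+q}\,v_k(x_k)^p<0\qquad\text{for large }k,
\]
using $v_k(x_k)>0$, which contradicts the maximality condition $-\Delta u_k(x_k)\ge 0$. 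Under \eqref{hypApriori4}, the symmetric lower bound $-\Delta v_k\ge \eta\, v_k^r u_k^{p+q}$ holds in the regime $v/u\to 0$ (using $b\ge\bar m$); rescaling and using $\tilde u_k\to 1$ locally gives $-\Delta\tilde v_k\ge\eta'\tilde v_k^r$ on $B_R$, and since $r\le 1$ together with $\tilde v_k\le 1$ yield $\tilde v_k^r\ge\tilde v_k$, one obtains $-\Delta\tilde v_k\ge\eta'\tilde v_k$ on $B_R$. Testing against the first Dirichlet eigenfunction $\phi_R$ of $-\Delta$ on $B_R$ and applying Green's identity produces $(\eta'-\lambda_1(B_R))\int_{B_R}\tilde v_k\phi_R\le 0$, which for $R$ large enough (so that $\lambda_1(B_R)<\eta'$) forces $\tilde v_k\equiv 0$ on $B_R$, contradicting the positivity of $v_k$. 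In either case the contradiction is achieved, so no such unbounded sequence can exist and the a priori bound \eqref{AprioriBoundM} follows.
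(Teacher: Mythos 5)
Your proposal is correct and follows essentially the same strategy as the paper's proof: the Gidas--Spruck rescaling with scale $\lambda\sim M^{-(\sigma-1)/2}$, the interior/boundary dichotomy according to whether $\mathrm{dist}(x_k,\partial\Omega)/\lambda_k\to\infty$, classification of the entire limit via Corollary~\ref{rmqNonneg} and of the half-space limit via Theorem~\ref{thm_general}/Corollary~\ref{thm_half2}, and the exclusion of the semitrivial limit $(\bar C,0)$ by a maximum-point argument under \eqref{hypApriori5} and a first-eigenfunction test on $B_R$ under \eqref{hypApriori4}. The only cosmetic difference is that the paper packages the \eqref{hypApriori5} step as a universal estimate $v(x_0)\ge\eta\,u(x_0)$ at the maximum point of $u$ (which then forces $V(0)\ge\eta U(0)>0$ in the rescaled limit), whereas you derive the direct sign contradiction $-\Delta u_k(x_k)<0$; the underlying computation is identical.
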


\begin{thm}
\label{ThmExist}
Let  (\ref{hypApriori1})--(\ref{hypApriori4}) be satisfied.
Assume in addition that $a, b, c, d, h_1, h_2$ are H\"older continuous and that for some $\eps>0$
\begin{equation}
\label{hypExist1}
\inf_{x\in\Omega,\ u, v>0} u^{-1}\,h_1(x,u,v)>-\infty, \qquad
\inf_{x\in\Omega,\ u, v>0} v^{-1}\,h_2(x,u,v)>-\infty,
\end{equation}
\begin{equation}
\label{hypExist2}
\sup_{x\in\Omega,\ u>0} u^{-1}\,h_1(x,u,0)< \lambda_1(-\Delta, \Omega),\qquad
\sup_{x\in\Omega,\ v>0} v^{-1}\,h_2(x,0,v)< \lambda_1(-\Delta, \Omega),
\end{equation}
\begin{equation}
\label{hypExist3}
\sup_{x\in\Omega,\ u,v\in (0, \eps)^2} (u+v)^{-1}\,[h_1(x,u,v)+h_2(x,u,v)]< \lambda_1(-\Delta,\Omega).
\end{equation}
Then there exists  a positive classical solution of (\ref{system_pqr_Dir2}).
\end{thm}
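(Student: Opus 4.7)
The plan is to apply topological degree theory in the positive cone, following the well-known blow-up/degree scheme (cf.\ \cite{CMM, FigSir, Zou}). I work in $X := C_0(\overline\Omega) \times C_0(\overline\Omega)$ (continuous functions vanishing on $\partial\Omega$) with the cone $P := \{(u,v) \in X : u, v \ge 0\}$. Using \eqref{hypExist1}, one picks $\Lambda > 0$ large enough so that, after truncating the reaction terms outside the ball $\{u+v \le M+1\}$ (where $M$ is the a priori bound from Theorem \ref{AprioriBound2}) and adding $\Lambda u, \Lambda v$ to both sides of (\ref{system_pqr_Dir2}), the resulting right-hand sides $F_1, F_2$ are nonnegative on $P$. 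One then defines $T(u,v)$ as the pair of Dirichlet solutions of $(-\Delta + \Lambda)\phi_i = F_i(\cdot,u,v)$; standard Schauder estimates make $T : P \to P$ completely continuous, and its fixed points with $\|(u,v)\|_\infty \le M$ coincide with the nonnegative classical solutions of (\ref{system_pqr_Dir2}).

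Next comes the computation of the cone fixed-point index $i_P(T, U)$ on two nested balls. For the index at infinity, I would introduce a homotopy $T_\tau$, $\tau \in [0,1]$, that deforms $(h_1,h_2)$ continuously into a simple pair (e.g.\ $(-\gamma u, -\gamma v)$ with $\gamma$ large) while keeping \eqref{hypApriori3}--\eqref{hypApriori4} satisfied with uniform constants. Theorem \ref{AprioriBound2} then yields a uniform a priori estimate along the homotopy, and at the endpoint the limiting system has no nontrivial nonnegative solution (by testing against the first Dirichlet eigenfunction $\varphi_1$ and using $\gamma$ large). Homotopy invariance therefore gives $i_P(T, B_R) = 0$ for every sufficiently large $R$. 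For the index at $0$, I would apply the standard criterion $i_P(T, B_\rho) = 1$ for $\rho$ small: assumptions \eqref{hypExist2}-\eqref{hypExist3}, by testing against $\varphi_1$ (separately for putative semi-trivial fixed points via \eqref{hypExist2} and for putative fully positive small fixed points via \eqref{hypExist3}, after absorbing the higher-order reaction terms for $\rho < \varepsilon$ small), rule out fixed points of $tT$ of norm $\rho$ for any $t \in [0,1]$, which forces the index to equal $1$.

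By additivity of the fixed-point index, $T$ admits a fixed point $(u,v) \in P$ with $\rho < \|(u,v)\|_\infty < R$, i.e.\ a nontrivial nonnegative classical solution of (\ref{system_pqr_Dir2}). The strong maximum principle applied to each equation, together with \eqref{hypExist2} (which excludes any semi-trivial fixed point), upgrades $(u,v)$ to a strictly positive solution in $\Omega$.

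The main obstacle I expect is the construction of an admissible homotopy used in the a priori bound step: one must ensure that every intermediate system remains in the class covered by Theorem \ref{AprioriBound2}, i.e.\ that either \eqref{hypApriori4} or \eqref{hypApriori5} continues to hold with uniform constants throughout the deformation. Since \eqref{hypApriori4} is a one-sided bound on the lower-order perturbation relative to the dominant reaction, one cannot homotopy $(h_1,h_2)$ arbitrarily; a careful two-stage path that first damps the sign-indefinite contribution and then drives the system to a provably solution-free limit appears to be the cleanest route, and verifying its admissibility is where most of the technical work will lie.
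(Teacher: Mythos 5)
Your overall scheme (cone fixed-point index, index $0$ at a large radius via a homotopy together with a nonexistence result, index $1$ at a small radius via the superlinearity at zero, solution in the annulus) matches the paper's, and the small-radius step using \eqref{hypExist2}--\eqref{hypExist3} with $\varphi_1$-testing is essentially what the paper does. The difference in the cone map is cosmetic: the paper uses $\mathcal{T}(\phi,\psi)=(u_+,v_+)$ to land in the cone instead of your truncation-plus-$\Lambda$-shift, and both are standard.

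The real problem is the large-radius step. You propose to homotopy $(h_1,h_2)$ to $(-\gamma u,-\gamma v)$ with $\gamma$ large and then kill solutions by testing with $\varphi_1$. This cannot work as stated: at the endpoint of your homotopy the \emph{dominant} reaction terms $u^rv^p[a v^q-cu^q]$ and $v^ru^p[b u^q-dv^q]$ are untouched, and the $\varphi_1$-test applied to, say, the $u$-equation only yields
$(\lambda_1+\gamma)\int u\varphi_1\le\int a\,u^rv^{p+q}\varphi_1$,
which gives no contradiction because the right-hand side can be of order $M^{\sigma-1}\int u\varphi_1$ with $M$ the a priori bound, and $M$ depends on $\gamma$; you are caught in a circularity between the choice of $\gamma$ and the size of $M$. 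Moreover adding a linear absorption does not change the superlinear structure, so "$\gamma$ large" is not enough to rule out \emph{positive} solutions. The paper solves this by going in the opposite direction: the homotopy adds $tA$ to the reaction coefficients $a,b$ and adds $tA(1+u)$, $tA(1+v)$ as source terms (see \eqref{system_pqr_DirF}--\eqref{system_pqr_Dirhath}), making the system \emph{more} superlinear; then at $t=1$ the nonexistence is established not by a naive eigenfunction test but by showing $\mathcal{S}:=\sqrt{uv}$ satisfies $-\Delta\mathcal{S}\ge(A-C_1)\mathcal{S}$ on a fixed subdomain $\omega$, using the elementary but crucial algebraic identity $X^r+X^{p+q+1}-X^{q+r}-X^{p+1}=X^r(1-X^q)(1-X^{p+1-r})\ge 0$ (which is where the structural hypothesis $r\le 1$ from \eqref{hypApriori4} enters), and then testing $\mathcal{S}$ against the first eigenfunction of $\omega$ with $A>\lambda_1(-\Delta,\omega)+C_1$. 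Your proposal has no substitute for this step; as you yourself note, this is exactly "where most of the technical work will lie", and as written the proposal does not close that gap.

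A secondary, smaller point: you also need to verify that the homotopied family stays within the scope of Theorem \ref{AprioriBound2}, uniformly in $t$. The paper does this explicitly by checking that the extra $At(1+u)$ terms remain "lower order" in the rescaling and that the modified coefficients still satisfy \eqref{hypApriori2}; with your deformation you would face the analogous verification but, again, the endpoint nonexistence would still be missing.
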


\begin{rmq}
\label{rmqDegree}
We will not treat the existence question under the assumption  (\ref{hypApriori5}),
which seems to be a delicate problem.
 The reason is that we prove Theorem \ref{ThmExist} by using a deformation of the
system (\ref{system_pqr_Dir2}) via homotopy,  adding positive linear terms (see \eqref{system_pqr_Dirhath} below). However,  with such terms,
assumption (\ref{hypApriori5}) is no longer  satisfied and we cannot use Theorem~\ref{AprioriBound2}.
\end{rmq}

\begin{rmq}
\label{remruleout}
Like many previous works, our proof of a priori estimates uses the classical rescaling method of Gidas and Spruck \cite{GS1}.
However, as mentioned in the introduction, arises an additional difficulty: to rule out the possibility of semitrivial rescaling limits, of the form $(C_1,0)$ or $(0,C_2)$ (see Step~2 below).
Under assumption~(\ref{hypApriori4}), this will be achieved by a suitable eigenfunction argument, while (\ref{hypApriori5}) will guarantee that in each blowing up solution $(u,v)$ of (\ref{system_pqr_Dir2}) the components $u$ and $v$ explode at a comparable rate.
Note that a similar difficulty appears in the work~\cite{Zou}, which studied a class of cooperative systems with nonnegative nonlinearities in the form of products.
In that case, the problem was dealt with by different techniques, namely moving planes and Harnack inequalities.
\end{rmq}

For the reader's convenience, before giving the proofs of Theorems~\ref{AprioriBound2}-\ref{ThmExist} we quickly review the role of the hypotheses in these theorems. The first condition in  (\ref{hypApriori1}) guarantees that the strong maximum principle applies to the  system \eqref{system_Spqr}, while the second condition in (\ref{hypApriori1}) is a usual superlinearity and subcriticality condition on the nonlinearities at infinity. The hypothesis (\ref{hypApriori3}) says $h_1$ and $h_2$ are indeed of "lower order", and disappear in the blow-up limit, while the assumptions \eqref{hypApriori4}-\eqref{hypApriori5} are used to exclude semitrivial blow-up limits. The hypothesis \eqref{hypExist1} permits to us to apply the strong maximum principle to \eqref{system_pqr_Dir2}, whereas \eqref{hypExist2} implies that for each nonnegative solution of \eqref{system_pqr_Dir2} we have $u\equiv0$ if and only if $v\equiv0$. Finally, \eqref{hypExist3} is a standard superlinearity condition at zero for \eqref{system_pqr_Dir2}.

\begin{proof}[\underline{Proof of Theorem~\ref{AprioriBound2}}.]
We will consider the following parametrized version of system (\ref{system_pqr_Dir2}) (this will be needed in the proof of Theorem~\ref{ThmExist}):
\begin{equation}
\label{system_pqr_Dir3}
\left\{\ {\alignedat2
-\Delta u&=F(t,x,u,v), &\qquad &x\in \Omega,\\
 -\Delta v&=G(t,x,u,v), &\qquad&x\in \Omega,\\
u&=v=0, &\qquad &x\in \partial\Omega,\\
 \endalignedat}\right.
\end{equation}
where
\begin{equation}
\label{system_pqr_DirF}
F(t,x,u,v):=u^rv^p\bigl[(a(x)+tA)v^q-c(x)u^q\bigr]+\hat h_1(t,x,u,v),
\end{equation}
\begin{equation}
\label{system_pqr_DirG}
G(t,x,u,v):=v^ru^p\bigl[(b(x)+tA)u^q-d(x)v^q\bigr]+\hat h_2(t,x,u,v),
\end{equation}
and
\begin{equation}
\label{system_pqr_Dirhath}
\hat h_1(t,x,u,v)=h_1(x,u,v)+At(1+u),\quad \hat h_2(t,x,u,v)=h_2(x,u,v)+At(1+v).
\end{equation}
Here $A>0$ is a constant to be fixed below, and $t$ is a parameter in $[0,1]$.

Note that (\ref{system_pqr_Dir2}) is (\ref{system_pqr_Dir3}) with $t=0$.
Under assumption (\ref{hypApriori4}), we will prove the bound in (\ref{AprioriBoundM}) for the positive solutions of (\ref{system_pqr_Dir3}), uniformly for $t\in [0,1]$
(but possibly depending on $A$), whereas under assumption (\ref{hypApriori5}) we will restrict ourselves to $t=0$\footnote{The restriction $t_j=0$
under assumption (\ref{hypApriori5}) will be used only in Step~2
to exclude semi-trivial rescaling limits.}
(see~Remark~\ref{rmqDegree}).

We assume for contradiction that there exists a sequence $\{t_j\}\subset [0,1]$ and a sequence $(u_j,v_j)$
of positive solutions of (\ref{system_pqr_Dir3}) with $t=t_j$,
such that $\|u_j\|_\infty+\|v_j\|_\infty \to \infty$.
We may assume $\|u_j\|_\infty \geq \|v_j\|_\infty$ without loss of generality.
Set $\alpha=2/(\sigma-1)$. Let $x_j\in\Omega$ be such that $u_j(x_j)=\|u_j\|_\infty$ and set
$$\lambda_j:=\bigl(\|u_j\| ^{1/\alpha}_\infty+\|v_j\| ^{1/\alpha}_\infty\bigr)^{-1}
\to 0, \quad\hbox{ as $j\to\infty$}.$$
By passing to a subsequence, we may assume that $x_j\to x_\infty\in\overline\Omega$
and $t_j\to t_0\in [0,1]$.
Setting $d_j:=\text{dist}(x_j,\partial\Omega)$, we then split the proof into two cases,
according to whether $d_j/\lambda_j\to\infty$ (along some subsequence) or
$d_j/\lambda_j$ is bounded.
\smallskip

{\bf Case A:} $d_j/\lambda_j\to\infty$.
\smallskip

This case is treated in two steps.
\smallskip

{\it Step 1:} {\it Convergence of rescaled solutions to a semi-trivial entire solution.}

We rescale the solutions around $x_j$ as follows:
\begin{equation}
\label{proofApriori0}
\tilde u_j(y)=\lambda_j^\alpha\, u_j(x_j+\lambda_j y), \quad
\tilde v_j(y)=\lambda_j^\alpha\, v_j(x_j+\lambda_j y),
\qquad y\in\Omega_j,
\end{equation}
where $\Omega_j=\{y\in\mathbb{R}^n: |y|<d_j/\lambda_j\}$.
Due to the definition of $\lambda_j$, it is clear that
\begin{equation}
\label{proofApriori1}
\tilde u_j(y),\ \tilde v_j(y) \leq 1,\qquad y\in\Omega_j.
\end{equation}
Moreover, $\tilde u_j^{1/\alpha}(0)=\lambda_j\,\|u_j\|_\infty^{1/\alpha}
\geq \lambda_j\,(\|u_j\|_\infty^{1/\alpha}+\|v_j\|_\infty^{1/\alpha}\bigr)/2=1/2$, hence
\begin{equation}
\label{proofApriori2}
\tilde u_j(0)\geq 2^{-\alpha}.
\end{equation}
We see that $(\tilde u, \tilde v)=(\tilde u_j, \tilde v_j)$ satisfies the system
\begin{equation}
\label{proofApriori3}
 \left\{\;{\alignedat2
 -\Delta \tilde u &= \tilde u^r \tilde v^p\bigl[(a(x_j+\lambda_j y)+t_j A)\,\tilde v^q-b(x_j+\lambda_j y)\,\tilde u^q\bigr]+\tilde h_{1,j}(y),    &\quad& y\in\Omega_j,\\
 -\Delta \tilde v &= \tilde v^r \tilde u^p\bigl[(b(x_j+\lambda_j y)+t_j A)\,\tilde u^q-d(x_j+\lambda_j y)\tilde v^q\bigr]+\tilde h_{2,j}(y),    &\quad& y\in\Omega_j,\\
  \endalignedat}\quad\right. 
\end{equation}
where
$$\tilde h_{i,j}(y)=\lambda_j^{\alpha+2}\hat h_i(t_j,x_j+\lambda_j y,\lambda_j^{-\alpha}\tilde u_j(y),\lambda_j^{-\alpha}\tilde v_j(y)),
\quad i=1,2.
$$
In view of  (\ref{hypApriori3}), \eqref{proofApriori1}, $\sigma>1$, and $\alpha+2-\alpha\sigma=0$  we have
\begin{equation}
\label{proofApriori4}
\sup_{\Omega_j} \ (|\tilde h_{1,j}|+|\tilde h_{2,j}|) \leq \lambda_j^{\alpha+2}(\lambda_j^{-\alpha \sigma}o(1) + 2A(1+\lambda_j^{-\alpha}) )\to 0,
 \quad\hbox{ as $j\to\infty$.}
\end{equation}
For each fixed $R>0$, we have $B_{2R}\subset\Omega_j$ for $j$ sufficiently large,
and $|\Delta \tilde u_j|,\ |\Delta \tilde v_j|\allowbreak \leq C(R)$ in $B_{2R}$,
owing to (\ref{proofApriori1}), (\ref{proofApriori3}), (\ref{proofApriori4}).
It follows from interior elliptic estimates that
the sequences $\tilde u_j$, $\tilde v_j$ are bounded in $W^{2,m}(B_R)$ for each $1<m<\infty$.
By embedding theorems, we deduce that they are bounded in
$C^{1+\gamma}(\overline{B_R})$ for each $\gamma\in (0,1)$.
It follows that, up to some subsequence,
$$\lim_{j\to\infty}(\tilde u_j, \tilde v_j)=(U,V),\quad\hbox{ locally uniformly on $\mathbb{R}^n$,}$$
where $(U,V)$ is a bounded nonnegative classical solution of
\begin{equation}
\label{proofAprioriUV}
\left\{\;{\alignedat2
 -\Delta U &= U^r V^p\bigl[a_0V^q-c_0U^q\bigr],    &\qquad& y\in\mathbb{R}^n,\\
 -\Delta V &= V^r U^p\bigl[b_0U^q-d_0V^q\bigr],    &\qquad& y\in\mathbb{R}^n,\\
  \endalignedat}\qquad\right. 
\end{equation}
with
\begin{equation}
\label{proofAprioriUV2}
a_0=a(x_\infty)+t_0A>0, \ b_0=b(x_\infty)+t_0A>0,\ c_0=c(x_\infty)\ge 0,\ d_0=d(x_\infty)\ge 0.
\end{equation}
Moreover,
\begin{equation}
\label{proofAprioriUV3}
c_0d_0<a_0b_0
\end{equation}
 in view of (\ref{hypApriori2}).
Also, $U(0)\geq 2^{-\alpha}$ due to (\ref{proofApriori2}).
By Theorem~\ref{thm_SpqrLiouv}(i) and Corollary~\ref{rmqNonneg}, there exists a constant $\bar C>0$ such that $U\equiv \bar C$ and $V\equiv 0$, hence
\begin{equation}
\label{proofApriori5}
\lim_{j\to\infty}(\tilde u_j, \tilde v_j)=(\bar C,0),\quad\hbox{ locally uniformly on $\mathbb{R}^n$.}
\end{equation}

\smallskip

{\it Step 2:} {\it Exclusion of semi-trivial rescaling limits.}

Let us first consider the case when assumption (\ref{hypApriori4}) is satisfied.
For some $\delta, M_1>0$ we have
$$\hat h_2(t,x,u,v)\ge (-\bar m+\delta) v^ru^{p+q},\quad\hbox{ for $u\ge M_1
\max(v,1)$},$$
{(uniformly in $x\in \Omega$ and $t\in [0,1]$)} and hence
$$
\tilde h_{i,j} \ge (-\bar m+\delta) \tilde{v}_j^r \tilde{u}_j^{p+q},
\quad\hbox{ for $\tilde{u}_j\ge M_1  \max(\tilde{v}_j,
{\lambda_j^\alpha),\quad i=1,2.}$}$$
Fix $\eps\in (0,1)$ with
$$\eps\le \min\Bigl\{\frac{\bar C}{2M_1},
 \Bigl(\frac{\delta}{{2}\|d\|_\infty}\Bigr)^{1/q}\frac{\bar C}{2}\Bigr\}.$$
Take $R>0$ to be chosen later. By (\ref{proofApriori5}), there exists $j_0$ such that, for all $j\ge j_0$, we have
$\tilde u_j\ge \bar C/2$, $\tilde v_j\le \eps$ on $B_R$, and
$\tilde{u}_j\ge \bar C/2 \ge M_1  \max(\tilde{v}_j, \lambda_j^\alpha)$, since $\lambda_j^\alpha \to 0$ as $j\to\infty$. Hence
\begin{eqnarray*}
 -\Delta \tilde v_j &\ge& \tilde v_j^r \tilde u_j^p\bigl[\bigl(b(x_j+
\lambda_j y)+t_jA-\bar m+\delta\bigr)\,\tilde u_j^q-d(x_j+\lambda_j y)
\tilde v_j^q\bigr]\\
       &\ge& \tilde v_j^r \tilde u_j^p\bigl[\textstyle {\delta}\,
\tilde u_j^q-\|d\|_\infty\eps^q]
       \ge   \textstyle\frac{\delta}{2}\bigl(\frac{\bar C}{2}\bigr)^{p+q}
\tilde v_j^r
       \ge {\textstyle\frac{\delta}{2}}\bigl(\frac{\bar C}{2}\bigr)^{p+q}
\tilde v_j\qquad \mbox{in }\; B_R,
 \end{eqnarray*}
(in the last inequality we used $r\le 1$). This implies that the first
eigenvalue of the Laplacian in $B_R$ is larger than
 $\frac{\delta}{{2}}\bigl(\frac{\bar C}{2}\bigr)^{p+q}$, which is a contradiction for sufficiently large $R$. More precisely, denote by $\lambda_1(R)$ and $\varphi_R$ the first eigenvalue and eigenfunction of $-\Delta$ in $B_R$
with Dirichlet boundary conditions. Since $\lambda_1(R)=\lambda_1(1)R^{-2}$, by multiplying the above inequality with $\varphi_R$
and by integrating by parts, we get
$$\lambda_1(1)R^{-2}\int_{B_R}\tilde v_j  \varphi_R\, dx =
-\int_{B_R}\tilde v_j  \Delta\varphi_R\, dx  \ge
-\int_{B_R} \varphi_R\Delta\tilde v_j \, dx  \ge \textstyle\frac{\delta}{4}\bigl(\frac{\bar C}{2}\bigr)^{p+q}
\displaystyle\int_{B_R}\tilde v_j  \varphi_R\, dx.$$
By taking $R$ sufficiently large (depending only on $\delta, \bar C, p, q$), this implies $\tilde v_j =0$ on $B_R$, a contradiction.

Let us now consider the case when assumption (\ref{hypApriori5}) is satisfied, and $t_j=0$.
Now there exist $\delta, M_1>0$ such that
$$h_1(x,u,v)\le (m-\delta) u^{r+q}v^p, \quad\hbox{ if }\; u\ge M_1 \max(v,1).$$
Therefore, for any positive solution $(u,v)$ of (\ref{system_pqr_Dir2}), if $\|u\|_\infty\ge M_1$ then, at a maximum point $x_0$ of $u$, we have
either $u(x_0)<M_1 v(x_0)$, or else
$$0\le -\Delta u(x_0)\le u^rv^p[av^q-(c-m+\delta)u^q](x_0).$$
Since $u$ and $v$ are positive  we deduce that
$${ v(x_0)}\ge \Bigl(\frac{\delta}{a(x_0)}\Bigr)^{1/q}u(x_0)\ge
\Bigl(\frac{\delta}{\|a\|_\infty}\Bigr)^{1/q}{ u(x_0)}.$$
Hence there exists a constant $\eta>0$
such that, for any positive solution $(u,v)$ of (\ref{system_pqr_Dir2}),
$$\|u\|_\infty{= u(x_0)}\ge {M_1} \Longrightarrow {v(x_0)}\ge \eta\,{u(x_0)}.$$
In view of definition (\ref{proofApriori0}), this implies $\tilde v_j(0)\ge \eta\tilde u_j(0)$,
hence $V(0)\ge\eta U(0)\ge\eta2^{-\alpha}$, which excludes semitrivial limits and leads  to a contradiction with the nonexistence of positive solutions of \eqref{proofAprioriUV}.

\smallskip
{\bf Case B:} $d_j/\lambda_j$ is bounded.
We may assume that $d_j/\lambda_j\to c_0\geq0$.
Arguing similarly to~\cite[pp.~891-892]{GS1}
 (see also~\cite[p.~265]{QS}), after performing local changes of coordinates
which flatten the boundary, we end up with a nontrivial
nonnegative (bounded) solution $(U,V)$ of system (\ref{proofAprioriUV}) in a half-space,
with $U=V=0$ on the boundary.
Moreover, (\ref{proofAprioriUV3}) is satisfied. 
By Proposition~\ref{Prop-uKv} and Theorem \ref{thm_general}, we deduce $U=KV$, $K>0$, which in turn implies that $-\Delta U=C_1U^\sigma$, $-\Delta V=C_2V^\sigma$ in the half-space, for some $C_1, C_2>0$.
This yields a contradiction with the Liouville-type theorem in \cite{GS1} for half-spaces.
\end{proof}

\begin{proof}[\underline{Proof of Theorem~\ref{ThmExist}}.]
First, it is important to observe that the assumptions $q+r\ge 1$ and (\ref{hypExist1}) guarantee that any
nonnegative solution of (\ref{system_pqr_Dir3})  satisfies $u>0$ and $v>0$ in~$\Omega$,
unless $t=0$ and $(u,v)\equiv (0,0)$.
Indeed, if $u\not\equiv 0$, then $u>0$  by the strong maximum principle -- note by \eqref{system_pqr_DirF} and (\ref{hypExist1}) we have $$F(t,x,u,v)\ge -C u,$$ for some $C\ge 0$ (which may depend on $t,u,v,c,d,A$).
On the other hand, assume for instance $u\equiv 0$. Then
$0=F(t,x,0,v) \ge At$ (since (\ref{hypExist1}) implies $h_1\ge -C_1u$ for some $C_1\ge0$), so  $t=0$. Then (\ref{hypExist2})  
implies  that $$-\Delta v\le h_2(x,0,v)\le \bigl(\lambda_1(-\Delta, \Omega)-\varepsilon_0\bigr) v\quad\mbox{ in }\;\Omega,$$ for some $\varepsilon_0>0$.
We then easily deduce $v\equiv0$, by multiplying with the first Dirichlet eigenfunction of $-\Delta$
and by integrating by parts.

Theorem~\ref{ThmExist} follows from a standard topological degree argument. We recall the following fixed point theorem, due to Krasnoselskii and Benjamin (see Proposition 2.1 and Remark 2.1 in  \cite{dFLN}). This type of statements are nowadays standard in proving existence results.
\begin{thm}\label{theofixed}  Let ${\cal K}$ be a
closed cone in a Banach space $E$, and let  $T\,:\, {\cal K} \rightarrow {\cal K}$ be a
compact mapping. Suppose $0<\delta<M<\infty$,
are such that
\begin{description}
\item[(i)] $\eta Tx\not=x$ for all $x\in {\cal K}, \|x\|=\delta$,  and all
$\eta\in[0,1]$;
\end{description}
and there exists a compact mapping $H\,:\, {\cal K}\times
[0,1]\rightarrow {\cal K}$ such that
\begin{description}
\item[(ii)] $H(x, 0) =Tx$ for all $x\in {\cal K}$;
\item[(iii)] $H(x,t) \not = x$ for all $x\in {\cal K}, \|x\|=M$ and
all $t\in [0,1]$;
\item[(iv)] $H(x,1)\not =x $ for all  $x\in {\cal K}, \|x\|\leq M$. \end{description} Then there exists a fixed point $x$ of $T$
(i.e. $Tx=x$), such that $\delta\le\|x\|\le M$.
\end{thm}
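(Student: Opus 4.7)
The plan is to deduce this from the theory of fixed point index (equivalently, Leray--Schauder degree) for compact maps on cones. Recall that if $\mathcal{K}$ is a closed cone in a Banach space $E$, $U\subset \mathcal{K}$ is relatively open and bounded, and $S\colon \overline{U}\to \mathcal{K}$ is compact with no fixed points on the relative boundary $\partial U$, then an integer-valued index $i(S,U,\mathcal{K})$ is defined, which is homotopy invariant, additive on disjoint open pieces, satisfies the excision property, vanishes whenever $S$ has no fixed point in $\overline{U}$, and equals $1$ when $S$ is the zero map and $0\in U$. (One constructs it by using Dugundji's theorem to extend a retraction $E\to \mathcal{K}$ and then invokes classical Leray--Schauder degree; alternatively one may cite Granas--Dugundji.)

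With this tool in hand, I would introduce the two relatively open sets
$$ U_\delta=\{x\in \mathcal{K}:\|x\|<\delta\},\qquad U_M=\{x\in \mathcal{K}:\|x\|<M\},$$
and compute $i(T,U_\delta,\mathcal{K})$ and $i(T,U_M,\mathcal{K})$ separately. First, hypothesis (i) asserts that the compact homotopy $(\eta,x)\mapsto \eta Tx$ has no fixed point on the relative sphere $\{\|x\|=\delta\}$ for any $\eta\in[0,1]$. By homotopy invariance,
$$ i(T,U_\delta,\mathcal{K})=i(0,U_\delta,\mathcal{K})=1,$$
the last equality because $0\in U_\delta$ and the zero map is constant.

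Next, I use the homotopy $H\colon \overline{U_M}\times[0,1]\to \mathcal{K}$. Hypothesis (iii) guarantees that $H(\cdot,t)$ has no fixed point on $\{\|x\|=M\}$ for any $t\in[0,1]$, so the index is preserved along the homotopy. Hypothesis (iv) says that $H(\cdot,1)$ has no fixed point in $\overline{U_M}$ at all, hence $i(H(\cdot,1),U_M,\mathcal{K})=0$ by the vanishing property. Combining these with (ii), which says $H(\cdot,0)=T$, I obtain
$$ i(T,U_M,\mathcal{K})=i(H(\cdot,0),U_M,\mathcal{K})=i(H(\cdot,1),U_M,\mathcal{K})=0.$$

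Finally, by the additivity/excision property applied to the pair $U_\delta\subset U_M$ (note that $T$ has no fixed point on $\{\|x\|=\delta\}$ by (i) with $\eta=1$, and none on $\{\|x\|=M\}$ by (iii) with $t=0$),
$$ i(T,U_M\setminus \overline{U_\delta},\mathcal{K})= i(T,U_M,\mathcal{K})-i(T,U_\delta,\mathcal{K})=0-1=-1\neq 0.$$
The nonvanishing of the index forces the existence of a fixed point $x\in U_M\setminus \overline{U_\delta}$, i.e.\ with $\delta\le \|x\|\le M$, which is the desired conclusion. The only subtle point is invoking a well-defined fixed point index on the cone; once that machinery is granted, the argument is purely formal homotopy bookkeeping, so I do not anticipate any substantive obstacle.
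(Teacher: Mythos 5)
Your argument is correct and is exactly the one the paper sketches: index $1$ on the small ball via the homotopy $\eta Tx$ and hypothesis (i), index $0$ on the large ball via $H$ and hypotheses (ii)--(iv), then additivity/excision to get a nonzero index on the annular region and hence a fixed point there. The paper simply records this computation in two lines and otherwise cites Krasnoselskii--Benjamin (Proposition 2.1 in de Figueiredo--Lions--Nussbaum), so there is nothing to add.
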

Observe that (i) implies $i(T,B_\delta\cap {\cal K}) = i(0,B_\delta\cap {\cal K}) =1$, where $i$ is the (homotopy invariant) fixed point index with respect to the relative topology of ${\cal K}$, whereas by (iii)-(iv)
$$
i(H(\cdot,0),B_R\cap {\cal K}) = i(H(\cdot,1),B_R\cap {\cal K}) =0,
$$
and the excision property of the index implies Theorem \ref{theofixed}.\smallskip

A little care is needed in defining $T$ and $H$.
Let ${\cal K}$ denote the  cone of nonnegative functions in
$E:=C(\overline\Omega)\times C(\overline\Omega)$,
and let  ${\cal T}:E\to {\cal K}$ be defined by $${\cal T}(\phi,\psi)=(u_+,v_+),$$
where $(u,v)$ is the solution of the linear problem
$$ \begin{aligned}
   -\Delta u &=\phi, \ -\Delta v=\psi \ \hbox{in }\Omega,\\
   u &=v=0 \ \hbox{ on }\partial\Omega.
\end{aligned}
$$
{It is clear that ${\cal T}$ is compact, since $(\phi,\psi)\to (u,v)$ is such by elliptic estimates, and $(u,v)\to (u_+,v_+)$ is Lipschitz.}
We set
$$
H((u,v),t):= {\cal T}\bigl(F(t,x,u(x),v(x)),G(t,x,u(x),v(x))\bigr),
$$
and $T(u,v) = H((u,v),0)$. Recall $F,G$ are defined in \eqref{system_pqr_DirF}-\eqref{system_pqr_DirG}, so fixed points of $H(\cdot,t)$ are solutions of (\ref{system_pqr_Dir3}).

We still have to choose the constant $A$ in
 \eqref{system_pqr_DirF}--\eqref{system_pqr_Dirhath}. 
We do this in the following way: by (\ref{hypExist1}), there exists $C_1>0$ such that $h_1\ge -C_1u$ and $h_2\ge -C_1v$, and we set
\begin{equation}
\label{choiceA}
A= \max\left\{C_1+\lambda_1(-\Delta,\omega),\ \sup_{x\in\Omega}c(x),\ \sup_{x\in\Omega}d(x)\right\},
\end{equation}
where $\omega$ is some smooth strict subdomain of $\Omega$.
{Once $A$ is fixed, we know from the proof of Theorem \ref{AprioriBound2} that there
exists a universal bound for the positive solutions (if they exist) of
(\ref{system_pqr_Dir3})
valid for all $t\in[0,1]$,  and we chose $M$ larger than this bound.}

Theorem~\ref{ThmExist} is proved if we show that $T$ has a nontrivial fixed point in ${\cal K}$. So it remains to check that the hypotheses of Theorem~\ref{theofixed} are satisfied.

$\bullet$ Let us first show that $H(\cdot,1)$ does not possess any fixed point in ${\cal K}$,
which will verify~(iv).
Assume such  a fixed point $(u,v)$ exists, which is then a solution of (\ref{system_pqr_Dir3}), with $t=1$. We have $u, v>0$ in $\Omega$, since $t>0$.
Let ${\cal S}=u^{1/2}v^{1/2}$. By using the inequality $2\Delta ((uv)^{1/2}) {\le} v^{1/2}u^{-1/2}\Delta u+u^{1/2}v^{-1/2}\Delta v$, we get 
\begin{align*}
-\Delta {\cal S}
&\geq\frac{u^{-1/2}v^{1/2}}{2}\Bigl[u^r v^p\bigl((a(x)+A)v^q-c(x) u^q\bigr)+(A-C_1)u+A\Bigr]\\
\noalign{\vskip 1mm}
&\qquad \qquad +\frac{u^{1/2}v^{-1/2}}{2}\Bigl[v^r u^p\bigl((b(x)+A)u^q- d(x) v^q\bigr)+(A-C_1)v+A\Bigr]\\
\noalign{\vskip 1mm}
&\geq \frac{v^\sigma X^{-1/2}}{2}\bigl[(a(x)+A)X^r +(b(x)+A)X^{p+q+1}-c(x)X^{q+r}-d(x) X^{p+1}\bigr]\\
\noalign{\vskip 1mm}
&\qquad \qquad +(A-C_1){\cal S} +A,
\end{align*}
where $X=u/v$.
Using (\ref{choiceA}) and the inequality
$$X^r +X^{p+q+1}-X^{q+r}-X^{p+1}=X^r(1-X^{q})(1-X^{p+1-r}) \ge 0$$
(note $p+1\ge1\ge r$),
it follows that
$$-\Delta {\cal S}\geq (A-C_1){\cal S} \quad \mbox{ in }\;\omega.$$
We reach a contradiction by testing this inequality with the first Dirichlet eigenfunction in $\omega$, because of \eqref{choiceA}.

$\bullet$ Hypothesis (iii) in Theorem \ref{theofixed} is a consequence of the a priori bound for positive solutions of (\ref{system_pqr_Dir3}) which we obtained in the proof of Theorem~\ref{AprioriBound2}, and  the observation we made in the beginning of the proof of Theorem~\ref{ThmExist}.

$\bullet$ Finally, assume that hypothesis (i) is not verified,
which {implies} that for any (small) $\delta>0$ we can find a
 positive solution $(u,v)$ with $\|(u,v)\|\le \delta$,
of (\ref{system_pqr_Dir2}) with the right-hand side of this system 
multiplied by {some} $\eta\in[0,1]$. By adding up the two equations in the system and using \eqref{hypExist3} we obtain, 
 with $\lambda_1=\lambda_1(-\Delta, \Omega)$ and for some $\eps_0>0$,
\begin{eqnarray*}
-\Delta(u+v) &\leq & C (u^rv^{p+q} + v^ru^{p+q}) + (\lambda_1 -\eps_0) (u+v)\\
&\leq& C(u+v)^{\sigma-1} (u+v) + (\lambda_1 -\eps_0) (u+v)\\
&\leq& (\lambda_1 -\eps_0/2) (u+v)
\end{eqnarray*}
(we obtained the last inequality by choosing $\delta$ sufficiently small). By testing again with the first Dirichlet eigenfunction we get a contradiction.

Theorem~\ref{ThmExist} is proved.
\end{proof}

\begin{rmq}
\label{rmqGeneralization}
By simple modifications of the above proof, one can show that assumption (\ref{hypExist1})
can be weakened as follows: for each $R>0$,
$$\inf_{x\in\Omega\atop u,v\in (0,R)} u^{-1}\,h_1(x,u,v)> -\infty,
\quad \inf_{x\in\Omega\atop u,v\in (0,R)} v^{-1}\,h_2(x,u,v) > -\infty
$$
(which allows for the application of the strong maximum principle)
and, for each $\eps>0$, there exists $C_\eps>0$ such that, for all $u,v\ge 0$, $x\in\Omega$,
$$h_1(x,u,v)\ge -\eps u^rv^{p+q}-C_\eps(1+u),
\quad
h_2(x,u,v)\ge -\eps v^ru^{p+q}-C_\eps(1+v).
$$
\end{rmq}
\medskip

\mysection{Appendix. Proof of Proposition~\ref{Prop-uKv}}

In the appendix we study the proportionality constants of the
system (\ref{system_Spqr}) and give the elementary proof of Proposition~\ref{Prop-uKv}. In the following we set
$$
f(u,v) = u^rv^p[av^q-cu^q], \qquad g(u,v) = v^ru^p[bu^q-dv^q].
$$

\begin{proof}[\underline{Proof of Proposition~\ref{Prop-uKv}}.]
We first note that if $c=d=0$ and $q=r-p$, then $Kg-f=(Kb-a)u^rv^r$
and~\eqref{condition_fg} is satisfied if and only if $K=a/b$
(and then actually $Kg-f\equiv 0$).
We may thus assume that either
\begin{equation}\label{c_or_d_pos}
c>0,\ \ \hbox{or } d>0,\ \ \hbox{or } q\ne r-p.
\end{equation}

\smallskip
Set $X=u/v$. For given $K>0$, we compute, for $u, v>0$,
$$
 \begin{aligned}
Kg-f&=Ku^pv^r(bu^q-dv^q)-u^rv^p(av^q-cu^q)\\
 &=u^pv^{r+q}[KbX^{q}-Kd-aX^{r-p}+cX^{q+r-p}]
 \end{aligned}$$
 and also, factorizing by $X^{r-p}$,
 $$Kg-f=u^rv^{p+q}[KbX^{q+p-r}-KdX^{p-r}-a+cX^{q}].$$
Set
$$m:=|r-p|\le q,$$
 and define
$$H_K(X)=AX^{q+m}+BX^{q}-CX^{m}-D,\qquad X>0,$$
where
$$\begin{cases}
A=c, \ B=Kb, \ C=a, \ D=Kd, &\quad\hbox{ if $r\ge p$, }\\
\noalign{\vskip 1mm}
A=Kb, \ B=c, \ C=Kd, \ D=a, & \quad\hbox{ otherwise. }\\
\end{cases}
$$
We then see that we may write
\begin{equation}
\label{Kg-f}
Kg-f=
\begin{cases}
u^pv^{r+q}H_K(u/v), &\quad\hbox{ if $r\ge p$, }\\
\noalign{\vskip 1mm}
u^rv^{p+q}H_K(u/v), & \quad\hbox{ otherwise. }\\
\end{cases}
\end{equation}

We next claim that there exists (at least one) $K>0$ such that $H_K(K)=0$.
Indeed, setting
$$J(K):=H_K(K)=
\begin{cases}
cK^{q+m}+bK^{q+1}-aK^{m}-dK, &\quad\hbox{ if $r\ge p$, }\\
\noalign{\vskip 1mm}
bK^{q+m+1}+cK^{q}-dK^{m+1}-a, & \quad\hbox{ otherwise, }\\
\end{cases}
$$
 we easily see that
$\lim_{t\to\infty} J(t)=\infty$ and $J(K)<0$ for small $K>0$,
and the claim follows.

Now pick any $K>0$ such that $H_K(K)=0$. We will prove that
\begin{equation}\label{Kg-fStrict}
[Kg(u,v)-f(u,v)][u-Kv]>0  \ \hbox{ for all $u,v>0$ with $u\neq Kv$,}
\end{equation}
a slightly stronger property than \eqref{condition_fg},
which will in particular establish the existence part of Proposition~\ref{Prop-uKv}.

We first consider the case $m>0$ and set $\ell=q/m\ge 1$.
Let us rewrite
\begin{equation}\label{HKX}
H_K(X)= h_K(X^m),
\qquad\hbox{with }h_K(t)=At^{\ell+1}+Bt^{\ell}-Ct-D, \quad t>0.
\end{equation}
This function is easier to handle than $H_K$ because its last two terms are affine
and $h_K$ is convex.
We claim that
\begin{equation}\label{ClaimhK}
\hbox{$h_K(t)<0$ for $t>0$ small.}
\end{equation}

$\bullet$ If $D>0$, then  $h_K(0)=-D<0$.

$\bullet$ If $\ell>1$ and $D=0$, then $h_K(0)=0$ and $h_K'(0)=-C=-a<0$.

$\bullet$ If $\ell=1$ (hence $q=m$) and $D=0$ (hence $r\ge p$ and $d=0$), then we may assume $c>0$ (see~(\ref{c_or_d_pos})).
We have $h_K(s)=cs^2+(Kb-a)s$.
Then necessarily $Kb-a<0$ (since $H_K(K)=0$).
Thus $h_K(0)=0$ and $h_K'(0)<0$.

\par\noindent In either case, (\ref{ClaimhK}) is true. On the other hand, we also have
\begin{equation}\label{ClaimhK2}
\lim_{t\to\infty} h_K(t)=\infty.
\end{equation}
(This is clear unless $A=0$ and $\ell=1$, but in that case $D>0$ due to (\ref{c_or_d_pos}),
hence $B>C$ due to $H_K(K)=0$.)
Now, since $h_K$ is convex on $[0,\infty)$,
it follows from (\ref{ClaimhK}), (\ref{ClaimhK2}) that $h_K$ has a unique zero on $(0,\infty)$.
Consequently, $K$ is the unique zero of $H_K$ on $(0,\infty)$
and, by (\ref{Kg-f}), (\ref{HKX}) and (\ref{ClaimhK}), we deduce (\ref{Kg-fStrict}).

If $m=0$, then $H_K(X)=(Kb+c)X^q-(a+Kd)$, which is monotonically
increasing in $X$, and (\ref{Kg-fStrict}) is clear.
The proof of the existence part is thus complete.

\smallskip

Let us now suppose $ab\ge cd$ and show the uniqueness of $K$.
Assume for contradiction that \eqref{condition_fg} is true for
two distinct values of $K$, say $K_2>K_1>0$.
Pick $Y\in (K_1, K_2)$. For $i\in\{1,2\}$, since $H_{K_i}(K_i)=0$ due to (\ref{Kg-f}), it follows
from what we already proved that (\ref{Kg-fStrict}) is true for $K=K_i$.
In particular, $K_1g(Y,1)>f(Y,1)>K_2g(Y,1)$.
Therefore $g(Y,1)<0$ and $f(Y,1)<0$, that is
$0<a<cY^q$ and $0<bY^q<d$.
Consequently $ab<cd$: a contradiction.

\smallskip

Finally, suppose $ab>cd$ and assume for contradiction that $cK^q\ge a$ (hence $c>0$).
Then $bK^{1+q+p-r} \ge (ab/c)K^{1+p-r} > dK^{1+p-r}$.
It then follows from (\ref{Kg-fStrict})  that $$0=Kg(K,1)-f(K,1)\ge K^r[bK^{1+q+p-r}-dK^{1+p-r}-a+cK^{q}]>0.$$
This contradiction shows  that $a-cK^q>0$.
 The proofs of $bK^q-d>0$ and of the equalities  are similar.
\end{proof}
\medskip

In the end, we prove {related} lower bounds  which we use in the proofs of Lemma \ref{DeltaW}
and  Proposition~\ref{propo_cdzero}.

\begin{lem}\label{elemlem}  Assume (\ref{Hyp_abcdpqr}).
\smallskip

(i) Assume $r>p$ and $c, d>0$.
Then the nonlinearities in the system \eqref{system_Spqr} satisfy, for some $C>0$,
$$
(Kg-f)(u-Kv)\geq C u^p v^p(u+Kv)^{q+r-p-1} (u-Kv)^2.
$$

\smallskip
(ii) Assume $d=0$ and $c>0$. Then
\begin{equation}
\label{eq23}
(Kg-f)(u-Kv)\geq
C u^r v^{p\wedge r}(u+Kv)^{q-1+(p-r)_+} (u-Kv)^2.
\end{equation}

\end{lem}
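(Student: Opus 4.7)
The plan is to reduce both inequalities to scalar one-variable statements via the substitution $X = u/v$. By the homogeneity of $f$ and $g$, for $v > 0$ we have $(Kg-f)(u,v) = v^{p+q+r}\tilde F(X)$ and $u - Kv = v(X-K)$, where $\tilde F(X) := cX^{q+r} + bKX^{p+q} - aX^r - dKX^p$. From the proof of Proposition~\ref{Prop-uKv} we already know $\tilde F(K) = 0$ and $\tilde F(X)(X-K) > 0$ for $X \ne K$. A direct exponent count converts (i) and (ii) into the scalar inequalities
\begin{align*}
\tilde F(X)(X-K) &\ge C_1 X^p (X+K)^{q+r-p-1}(X-K)^2,\\
\tilde F(X)(X-K) &\ge C_2 X^r (X+K)^{q-1+(p-r)_+}(X-K)^2,
\end{align*}
for all $X > 0$; the boundary case $v = 0$ then follows by continuity of both sides in $(u,v)$.

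For part (i), since $r > p$, I would use the factorization $\tilde F(X) = X^p H_K(X)$ with $H_K(X) = cX^{q+r-p} + bKX^q - aX^{r-p} - dK$, reducing the target to $H_K(X)(X-K) \ge C_1(X+K)^{q+r-p-1}(X-K)^2$. I would then study the quotient $\phi(X) := H_K(X)/[(X-K)(X+K)^{q+r-p-1}]$, extended continuously to $X = K$ by L'Hopital as $\phi(K) = H_K'(K)/(2K)^{q+r-p-1}$. Continuity of $\phi$ on the compactified interval $[0,\infty]$, together with $\phi(0) = d/K^{q+r-p-1} > 0$ (since $d > 0$), $\phi(\infty) = c > 0$, and strict positivity on $(0,\infty)\setminus\{K\}$ coming from $\tilde F(X)(X-K) > 0$, would give $C_1 = \inf \phi > 0$ by compactness, \emph{provided} $\phi(K) > 0$. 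The pivotal computation is therefore verifying $H_K'(K) > 0$: using the relations $a = cK^q + \alpha$, $d = bK^q - \beta$, $\alpha K^{r-p-1} = \beta$ from Proposition~\ref{Prop-uKv}, one finds
\[
H_K'(K) = cqK^{q+r-p-1} + bK^q(q+p-r) + d(r-p),
\]
which is strictly positive because $c, q > 0$, $q \ge r-p$, and $d(r-p) > 0$ by the hypotheses $d > 0$ and $r > p$.

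For part (ii), assume $d = 0$, so $\beta = bK^q$ and $\alpha = \beta K^{p+1-r} = bK^{q+p+1-r}$; eliminating $a = cK^q + bK^{q+p+1-r}$ in $\tilde F$ yields the clean factorization
\[
\tilde F(X) = X^r\bigl[\,c(X^q - K^q) + bK(X^{q+p-r} - K^{q+p-r})\,\bigr],
\]
valid whether $r \ge p$ or $r < p$ (and where the second summand vanishes harmlessly if $q+p-r=0$). I would then invoke the elementary inequality $(X^n - K^n)(X-K) \ge c_n(X+K)^{n-1}(X-K)^2$ for all $n, X, K > 0$, verified by setting $t = X/K$ and checking that the continuous function $(t^n - 1)/[(t-1)(t+1)^{n-1}]$ has positive limits at $0$, $1$ (via L'Hopital), and $\infty$, hence a positive infimum on $(0,\infty)\setminus\{1\}$. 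Applying it with $n = q$ to the first bracketed term when $p \le r$, and with $n = q + p - r > 0$ to the second when $p > r$, produces the required bound with exponent $q - 1 + (p-r)_+$.

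The principal technical obstacle is the positivity of $H_K'(K)$ in part (i): it is not transparent from the raw four-term expression and relies crucially on the constraint $\alpha K^{r-p-1} = \beta$ from Proposition~\ref{Prop-uKv} to eliminate $a$ (or equivalently to express everything through $\alpha, \beta$). Once this algebraic identity is cleanly extracted, the rest is a soft continuity/compactness argument in (i) and a direct application of the one-variable lemma on $(X^n-K^n)(X-K)$ in (ii).
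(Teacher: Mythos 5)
Your proof is correct, and it follows the same broad strategy as the paper's (reduction to a one-variable problem in $X=u/v$, followed by a continuity/compactness argument on a quotient), but it differs in two genuine ways that are worth noting. For part (i), the paper establishes $H_K'(K)>0$ indirectly: it passes to the variable $t=X^m$, making $h_K(t)=H_K(t^{1/m})$ convex, and then argues that $h_K'(K^m)>0$ must hold because otherwise convexity would force $h_K\geq 0$ everywhere, contradicting $h_K<0$ on $(0,K^m)$. You instead compute $H_K'(K)$ explicitly, eliminating $a$ through the relation $\alpha K^{r-p-1}=\beta$ (equivalently $a-cK^q = K^{p+1-r}(bK^q-d)$, a consequence of $H_K(K)=0$), and arrive at $H_K'(K)=cqK^{q+r-p-1}+bK^q(q+p-r)+d(r-p)>0$; this is a clean verification that makes the role of each sign hypothesis ($q\ge r-p$, $c>0$, $d>0$, $r>p$) transparent, at the cost of some algebra. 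The paper also splits the remaining estimate into two compactness arguments (first bounding $H_K(X)/(X^{q+r-p}-K^{q+r-p})$ from below, then invoking the elementary inequality $(x^k-y^k)(x-y)\geq C_k(x+y)^{k-1}(x-y)^2$), whereas you merge both into a single infimum argument on $\phi$; the two are equivalent. For part (ii), the paper writes $Kg-f=u^rv^{p+q}G(X)$ and simply asserts that $G(X)/(X-K)\geq C(X+1)^\ell$ "is easy to see" from $G'(K)>0$. Your explicit factorization $G(X)=c(X^q-K^q)+bK(X^{q+p-r}-K^{q+p-r})$, obtained by substituting $a=cK^q+bK^{q+p+1-r}$, together with a termwise application of the one-variable inequality (first term when $p\leq r$, second when $p>r$), is more transparent and actually justifies the bound the paper leaves to the reader. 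One minor point to keep in mind: the exponents $q+r-p-1$ in (i) and $q-1+(p-r)_+$ in (ii) may be negative, but this is harmless in both your argument and the paper's, since the quotient functions still have finite positive limits at $0$, $K$, and $\infty$.
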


\begin{proof}[\underline{Proof}]
(i) We use the same  notation as in the above proof of
Proposition~\ref{Prop-uKv}.
First note that ${h_K}'(K^m)>0$ since $h_K$ is negative and convex on $(0,K^m)$.
Denoting $$p(t)=\dfrac{h_K(t)}{t^{\ell +1}-K^{m(\ell +1)}},$$
we observe that $p(t)>0$ on $[0,K^m)\cup(K^m,+\infty)$ and that $p(t)$ has
positive limits as $t$ goes to $K^m$ or $+\infty$ (using that ${h_K}'(K^m)>0$).
Hence, for all $t\geq 0$, we have $p(t)\geq {C}$ for some constant $C>0$. 
So,
$$\dfrac{H_K(X)}{X^{m(\ell +1)}-K^{m(\ell +1)}}\geq C$$ and we obtain
$$H_K(X)(X-K)\geq C (X-K)(X^{m(\ell +1)}-K^{m(\ell +1)}).$$
Since
$(Kg-f)(u-Kv)=u^p v^{r+q+1}H_K(X)(X-K)$,
by using the inequality
$$(x^k-y^k)(x-y)\ge C_k(x+y)^{k-1}(x-y)^2,\quad x,y>0$$
for $k>0$ and some $C_k>0$, we get
$$
\begin{aligned}
(Kg-f)(u-Kv)
&\geq C \dfrac{u^p v^{r+q+1}}{v^{1+q+r-p}} (u^{q+r-p}-(Kv)^{q+r-p}) (u-Kv) \\
&\geq C u^p v^p(u+Kv)^{q+r-p-1} (u-Kv)^2.
\end{aligned}
$$

{(ii) Letting $X=u/v$, we have, for $u,v>0$,
 $$Kg-f=u^rv^{p+q}G(X),\quad\hbox{where }
G(X)=KbX^{q+p-r}+cX^{q}-a.$$
We know from Proposition~\ref{Prop-uKv} that $G$ vanishes only at $X=K$.
Since $G'(K)>0$, it is easy to see that
$$\frac{G(X)}{X-K}\ge C(X+1)^\ell,\quad X\in [0,\infty)\setminus\{K\},$$
where $\ell=q-1+(p-r)_+$.
Therefore
$$(Kg-f)(u-Kv) = u^rv^{p+q-1} \frac{G(X)}{X-K} (u-Kv)^2
 \ge Cu^rv^{p+q-1-\ell}(u+v)^\ell (u-Kv)^2.$$
The assertion follows.}
\end{proof}

{\footnotesize

}

\end{document}